\documentclass[11pt]{amsart}

\makeatletter{}\usepackage{amssymb}
\usepackage{amsmath}

\usepackage{color}
\usepackage{graphicx}
\usepackage[foot]{amsaddr}
\usepackage{enumitem}
\usepackage{caption}
\usepackage{hyperref}
\usepackage[mathscr]{eucal}
\usepackage{multicol}

\DeclareCaptionLabelFormat{rightline}{\rightline{\bothIfFirst{#1}{ }#2}}
\captionsetup[table]{labelformat=rightline,labelsep=newline,labelfont={md,sl},textfont=bf}
\captionsetup{width=0.9\textwidth}
\usepackage[font=small,floatrowsep=none,captionskip=5pt]{floatrow}

\providecommand{\prob}[1]{{\ensuremath{\mathrm{P}}\mspace{-2mu}\left[#1\right]}}\providecommand{\E}[1]{{\ensuremath{\mathrm{E}}\mspace{-2mu}\left[#1\right]}}\providecommand{\var}[1]{{\ensuremath{\mathrm{Var}}\mspace{-2mu}\left[#1\right]}}\providecommand{\tol}{\mathrm{TOL}}
\providecommand{\rset}{\mathbb{R}}
\providecommand{\rsetp}{\mathbb{R}_{+}}

\providecommand{\nset}{\mathbb{N}}

\providecommand{\Order}[1]{ {\ensuremath{ \mathcal O\left( #1 \right)}} }
\providecommand{\order}[1]{ {\ensuremath{ o\left( #1 \right)}} }

\providecommand{\mykeywords}[1]{\par \textbf{Keywords:} #1}
\providecommand{\subclass}[1]{\par{ \bf Class:} #1}
\providecommand{\CWork}{\ensuremath{C_{\text{\textnormal{work}}}}}
\renewcommand{\vec}[1]{{\ensuremath{{\boldsymbol #1}}}}
\providecommand{\valpha}{{{\ensuremath{\vec{\alpha}}}}}

\providecommand{\vdelta}{{\ensuremath{\vec{\delta}}}}
\providecommand{\vtau}{{\ensuremath{\vec{\tau}}}}
\providecommand{\intlim}[2]{{\ensuremath{ \int_{ \{ #1 \::\: #2 \}} }}}
\providecommand{\sumlim}[2]{{\ensuremath{ \sum_{ \{ #1 \::\: #2 \}}
    }}}

\providecommand{\eqdef}{{\ensuremath{ := } }}

\providecommand{\indset}{{\ensuremath{ I }}}
\providecommand{\indsetA}{{\ensuremath{ I_1 }}}
\providecommand{\indsetB}{{\ensuremath{ I_2 }}}
\providecommand{\indsetC}{{\ensuremath{ I_3 }}}
\providecommand{\indsetBC}{{\ensuremath{ \hat I }}}
\providecommand{\estS}{{\ensuremath{ \Delta \mathscr{S} }}}

\providecommand{\workEst}{{\ensuremath{ \widetilde W }}}
\providecommand{\workRem}{{\ensuremath{ \widetilde W_1 }}}

\providecommand{\inchangesA}[1]{#1}

\newcommand{\constRecall}[2]{#1 & in \eqref{#2} on page
  \pageref{#2}. \\}

\allowdisplaybreaks[1]
\title{Multi-Index {M}onte {C}arlo: When Sparsity Meets Sampling}

\author{
  Abdul--Lateef~Haji--Ali
  \and
  Fabio~Nobile
  \and
  Ra\'{u}l~Tempone
}

\address[abdullateef.hajiali@kaust.edu.sa, raul.tempone@kaust.edu.sa]
{Applied Mathematics and Computational Sciences, King Abdullah University of Science and Technology (KAUST), Thuwal, Saudi Arabia.}

\address[fabio.nobile@epfl.ch]{MATHICSE-CSQI,   {Ecole Polytechnique F\'ed\'erale de Lausanne}, Switzerland.}

\theoremstyle{plain}
\newtheorem{theorem}{Theorem}[section]
\newtheorem{lemma}{Lemma}[section]
\theoremstyle{remark}
\newtheorem{remark}{Remark}[section]
\newtheorem*{remark*}{Remark}
\theoremstyle{definition}
\newtheorem{example}{Example}
\newtheorem*{example*}{Example}
\graphicspath{{src/imgs/}}

\begin{document}
\makeatletter{}\makeatletter{}\begin{abstract}
  We propose and analyze a novel Multi-Index Monte Carlo (MIMC) method
  for weak approximation of stochastic models that are described in
  terms of differential equations either driven by random measures or
  with random coefficients. The MIMC method is both a stochastic
  version of the combination technique introduced by Zenger, Griebel
  and collaborators and an extension of the Multilevel Monte Carlo
  (MLMC) method first described by Heinrich and Giles. Inspired by
  Giles's seminal work, we use in MIMC high-order mixed differences
  instead of using first-order differences as in MLMC to reduce the
  variance of the hierarchical differences dramatically. This in turn
  yields new and improved complexity results, which are natural
  generalizations of Giles's MLMC analysis and which increase the
  domain of the problem parameters for which we achieve the optimal
  convergence, $\mathcal{O}(\tol^{-2}).$ \inchangesA{Moreover, in
    MIMC, the rate of increase of required memory with respect to
    $\tol$ is independent of the number of directions up to a
    logarithmic term which allows far more accurate solutions to be
    calculated for higher dimensions than what is possible when using
    MLMC}.

  We motivate the setting of MIMC by first focusing on a simple full
  tensor index set. We then propose a systematic construction of
  optimal sets of indices for MIMC based on properly defined profits
  that in turn depend on the average cost per sample and the
  corresponding weak error and variance. Under standard assumptions on
  the convergence rates of the weak error, variance and work per
  sample, the optimal index set turns out to be the total degree (TD)
  type. In some cases, using optimal index sets, MIMC achieves a
  better rate for the computational complexity than the corresponding
  rate when using full tensor index sets. We also show the asymptotic
  normality of the statistical error in the resulting MIMC estimator
  and justify in this way our error estimate, which allows
  both the required accuracy and the confidence level in our
  computational results to be prescribed. Finally, we include
  numerical experiments involving a partial differential equation
  posed in three spatial dimensions and with random coefficients to
  substantiate the analysis and illustrate the corresponding
  computational savings of MIMC.

  \mykeywords{Multilevel Monte Carlo, Monte Carlo, Partial
    Differential Equations with random data, Stochastic Differential
    Equations, Weak Approximation, Sparse Approximation, Combination
    technique} \subclass{65C05 \and 65N30 \and 65N22}
\end{abstract}

\maketitle

\pagestyle{myheadings}
\thispagestyle{plain}
\markboth{Multi-Index {M}onte {C}arlo}{Multi-Index {M}onte {C}arlo}
\makeatletter{}\section{Introduction}\label{s:intro}
The main concept of Multilevel Monte Carlo (MLMC) Sampling was
first introduced for applications in parametric integration by
Heinrich~\cite{heinrich98,hs99}.
Later, for weak approximation of  Stochastic Differential Equations (SDEs)
in mathematical finance, Kebaier~\cite{kebaier05} used a two-level
Monte Carlo technique, effectively using a coarse numerical approximation
as a control variate of a fine one, thus reducing the variance and the required number of samples on the fine grid.
In a seminal work, Giles \cite{giles08} extended this idea to multiple levels and gave it its familiar name: Multilevel Monte Carlo.
Giles introduced a hierarchy of discretizations
with geometrically decreasing grid sizes and optimized the
number of samples on each level of the hierarchy.
This resulted in a reduction in the computational burden
 from $\Order{\tol^{-3}}$ of the standard Euler-Maruyama Monte Carlo method with accuracy $\tol$ to
$\Order{\log{(\tol)}^2\tol^{-2}}$, assuming that the work to generate
a single realization on the finest level is $\Order{\tol^{-1}}$.
More recently, \cite{gs13} reduced this computational complexity to $\Order{\tol^{-2}}$
by using antithetic control variates with MLMC in multi-dimensional SDEs with smooth and piecewise smooth payoffs.
The MLMC method has also been extended and applied to a wide variety of
applications, including jump diffusions \cite{xg12} and Partial Differential Equations (PDEs) with random coefficients
\cite{bsz11,cst13,cgst11,gr12,tsgu13,haji_CMLMC,haji_opt}.
The goal in these applications is to compute a scalar quantity of
interest that is a functional of the solution of a PDE with random coefficients.
In \cite[Theorem 2.3]{tsgu13}, it has been proved that there is an optimal complexity rate similar to the previously mentioned one,
but this rate depends on the dimensionality of the problem, the relation between
the rate of variance convergence of the discretization method of the PDE and
the work complexity associated with generating a single sample of the quantity of interest.
In fact, in certain cases, the computational complexity can achieve the  optimal rate, namely $\Order{\tol^{-2}}$.

More recently, sparse approximation techniques \cite{bungartz2004sparse} have been coupled with MLMC in other works.
In \cite{mishra2012sparse}, the MLMC sampler was combined with a sparse tensor approximation method to
estimate high-order moments of the finite volume approximate solution of a hyperbolic conservation law that has random initial data.
Moreover, in \cite{Harbrecht_H13_MLSC, van2014multilevel},  new techniques were developed using sparse-grid stochastic collocation methods
instead of Monte Carlo sampling in a multilevel setting that resembles that of MLMC.

In the present work, we follow a different approach by introducing a
stochastic version of a sparse combination technique
\cite{zenger1990sparse,Griebel_first_combination92,Convergence_combination_94,Bungartz_Laplace_combination96,bungartz2004sparse,Hegland_Angle_Combination07}
in the construction of a new {\em Monte Carlo} sampler, which we refer
to as Multi-Index Monte Carlo (MIMC). MIMC can be seen as a
generalization of the standard Multilevel Monte Carlo Sampling method.
This generalization departs from the notion of one-dimensional levels
and first-order differences and instead uses multidimensional levels
and high-order mixed differences to reduce the variance of the
resulting estimator and its corresponding computational work
drastically. The goal of MIMC is to achieve the optimal complexity
rate of the Monte Carlo sampler, $\Order{\tol^{-2}}$, in a larger
class of problems and to provide better convergence rates in other
classes.  The main results of our work are summarized in
Theorems~\ref{thm:gmlmc_ft} and \ref{thm:gmlmc_td_opt}. These theorems
contain the optimal work estimates of MIMC when using full tensor
index sets and total degree index sets, respectively. {The results of
  MIMC with full tensor index sets are meant to motivate the setting
  of MIMC in a simple framework. However, we later show in this work
  that the total degree index sets are optimal given certain
  assumptions. In fact, we show that the rate of computational
  complexity of MIMC when using optimal index sets, and the
  corresponding conditions on the rate of weak convergence, are
  independent of the dimensionality of the underlying problem.}

In the next section, we start by motivating the class of problems we
consider and we introduce some notation that is used throughout this
work. Section~\ref{sec:gmlmc} introduces MIMC and lists the necessary
assumptions. Section~\ref{ss:full_tensor} presents the computational
complexity of a full tensor index set, and Section~\ref{ss:td_set}
motivates an optimal total degree index set and shows the
computational complexity of MIMC when using this index set. Next,
Section~\ref{s:res} presents the numerical experiments to substantiate
the derived results. Section~\ref{s:conc} summarizes the work and
outlines future work. \inchangesA{Finally, the Appendix contains
  proofs of different lemmas used in this paper including a proof of
  the asymptotic normality of the MIMC estimator. Moreover, Appendix
  \ref{app:defs} contains, for convenience, definitions of important
  quantities that are used throughout this paper.  }
 
\makeatletter{}\subsection{Problem Setting}\label{sec:hier_intro}
Let $S=\Psi(u)$ denote a real-valued functional applied to the unique solution, $u$, of an underlying stochastic model.
We assume that $\Psi$ is a smooth functional with respect to $u$.
Here, smoothness is characterized by $S$ satisfying {\bf Assumptions
  1-2} as presented in the next section.
Our goal is to
approximate the expected value of $S$, $\E{S}$, to a given accuracy $\tol$
and a given confidence level. We assume that individual outcomes
of the
underlying solution, $u$, and the evaluation of the functional, $S$, are approximated
by a discretization-based numerical scheme characterized by a
multidimensional discretization parameter, $\vec h$.
{For instance, for a multidimensional PDE, the vector $\vec h$ could
  represent the space discretization parameter in each direction
  separately, while for a time dependent PDE, the vector $\vec h$ could collect the space and time discretization parameters.}
The value of  the vector, $\vec h$, will govern the weak error and
variance of the approximation of $S$ as we will see below. To motivate this setting,
we now give one example and identify  the corresponding numerical discretizations,
the discretization parameter, $\vec h$, and the corresponding rates of approximation.
\begin{example}
  \label{ex:spde_problem}
Let $(\Omega,\mathcal{F},P)$ be a complete probability space and
$\mathcal{D} = \prod_{i=1}^d (0, D_i)$ for $D_i \in \rsetp$ be a hypercube
domain in
$\rset^d$.
 The solution $u: \mathcal{D} \times \Omega \to \rset$ here solves almost surely (a.s.)
the following equation:
\begin{equation}\label{eq:standard_PDE}
\begin{aligned}
    -\nabla \cdot \left( a(\vec x; \omega) \nabla u(\vec x; \omega) \right) &= f(\vec x; \omega) &\text{ for }
            \vec x  \in \mathcal D,  \\
     u(\vec x; \omega) &=  0 &\text{ for } \vec x \in \partial \mathcal D.
\end{aligned}
\end{equation}
This example is common in engineering applications like heat
conduction and groundwater flow. Here, the value of the diffusion
coefficient and the forcing are represented by random fields, yielding
a random solution and a functional to be approximated in the mean.
Given certain assumptions on coercivity and continuity
related to the random coefficients $a$ and $f$ \cite{tsgu13}, the
solution to \eqref{eq:standard_PDE} exists and is unique. Actually,
$u$ depends continuously on the coefficients of
\eqref{eq:standard_PDE}. A standard approach to approximate the
solution to \eqref{eq:standard_PDE} is to use Finite Elements on
Cartesian meshes. In such a setting, the vector parameter $\vec h =
(h_1,\ldots,h_d)>0$ contains
the mesh sizes in the different canonical directions and the corresponding
approximate solution is denoted by $u_h(\omega)$. Let
$r:\mathcal{D}\to\rset$ be a smooth function and let $\Psi(u) =
\int_{\mathcal{D}} u(x) r(x) dx$ be a linear functional. Our goal here
is to approximate $\E{\int_{\mathcal{D}} u(x) r(x) dx}.$

\end{example}
To particularize our set of discretizations, let us now introduce
integer multi indices, $\valpha \in \nset^d.$
Throughout this work, we  use discretization vectors of the form
\begin{equation*}h_i = h_{i,0} \beta_i^{-\alpha_i}\text{  with given constants $h_{0,i} > 0$ and $\beta_i > 1$ for $i=1,\ldots,d$.}
\end{equation*}
Correspondingly, we index our discrete approximations to $S$ by $\valpha$, denoting them as $\{S_\valpha\}_{\valpha \in \nset^d}$.
In addition, we make the standard assumption that
 $\E{S_\valpha} \to \E{S}$ as $\min_{1\leq i\leq d}{\alpha_i}
 \to \infty$. Finally, for later use, we define $|\valpha| = \sum_{i=1}^d \alpha_i$.

\makeatletter{}\section{Multi-Index Monte Carlo}\label{sec:gmlmc}
Here we introduce the MIMC discretization.  To this end, we begin by defining a first-order difference operator along direction $1\le i\le d$, denoted by $\Delta_{i}$, as follows:
\[ \Delta_{i} S_\valpha = \begin{cases}
  S_{\valpha} - S_{\valpha-\vec e_i}, & \text{if }\alpha_i > 0,\\
  S_\valpha                  & \text{if } \alpha_i=0,
  \end{cases}\]
  with $\vec e_i$ being the canonical vectors in $\rset^d$, i.e.
  $(\vec e_i)_j = 1$ if $j=i$ and zero otherwise.
For later use, we also define recursively the first-order mixed difference operator,
$ \Delta =  \otimes_{i=1}^d \Delta_i =  \Delta_1 (\otimes_{i=2}^d
\Delta_i) = \Delta_d (\otimes_{i=1}^{d-1} \Delta_i) .$

\begin{example*}[$d=2$]
  In this case, letting $\valpha = (\alpha_1,\alpha_2)$, we have
  \begin{align*}
    \Delta S_{(\alpha_1,\alpha_2)} &= \Delta_2 (\Delta_1 S_{(\alpha_1,\alpha_2)})\\
    &= \Delta_2 \left( S_{\alpha_1,\alpha_2} - S_{\alpha_1-1,\alpha_2} \right) \\
    &= \left( S_{\alpha_1,\alpha_2} - S_{\alpha_1-1,\alpha_2} \right) -  \left( S_{\alpha_1,\alpha_2-1} - S_{\alpha_1-1,\alpha_2-1} \right).
  \end{align*}

  Notice that in general, $\Delta S_\valpha$ requires $2^d$ evaluations of $S$
  at different discretization parameters, the largest work
  of which corresponds precisely to the index appearing in $\Delta S_{{\valpha}}$, namely $\valpha = {(\alpha_1,\alpha_2)}$.
\end{example*}
Let  $\estS_\valpha$ be an {\em unbiased} estimator of
$\Delta S_\valpha$.
In the trivial case, $\estS_\valpha = \Delta S_\valpha$ for
all $\valpha \in \nset^d$. However,
$\estS_\valpha$ can be taken to be more complicated such that it has a smaller
variance than that of $\Delta S_\valpha$, for example by constructing an antithetic
estimator similar to \cite{gs13}.
In any case, the MIMC estimator can be written as:
\begin{align} \label{eq:gmlmc} \mathcal{A} = \sum_{\valpha \in
    \mathcal{I}} \frac{1}{M_\valpha} \sum_{m=1}^{M_{\valpha}} \estS_\valpha(\omega_{\valpha,m}),
\end{align}
where $\mathcal{I}{\subset\nset^d}$ is an index set and $M_\valpha$ is
an integer number of samples for each $\valpha \in \mathcal{I}$. Here,
$\omega_{\valpha,m}$ are  independent, identically distributed
(i.i.d.) realizations of the underlying random inputs, $\omega$.
Denote $\var{\estS_\valpha} = V_\valpha$ and $\left|\E{\estS_\valpha}\right| = |\E{\Delta
   S_\valpha}| =
E_\valpha$. Moreover, denote by $W_\valpha$ the average work required to
compute a realization of $\estS_\valpha$. Then, the expected value of the total work corresponding to the estimator, $\mathcal{A}$, is
\begin{equation}\label{eq:total_work}\text{Total work} = W = \sum_{\valpha \in \mathcal{I}} W_\valpha M_\valpha. \end{equation}
Moreover, by independence, the total variance of the estimator is
\begin{equation*}\var{\mathcal{A}} =  \sum_{\valpha \in \mathcal{I}} \frac{V_\valpha}{M_\valpha}.
\end{equation*}

The objective of the MIMC estimator, $\mathcal{A}$, is to achieve a certain accuracy constraint of the form\begin{equation}\label{eq:accuracy_constr}
 P(
 |\mathcal{A} - \E{S}| \leq \tol
 )\
  \ge 1-\epsilon
\end{equation}
for a given accuracy  $\tol$ and a given confidence level determined
by $0<\epsilon \ll 1$.
Here, we further split the accuracy budget between
the bias and statistical errors,  imposing the following, more restrictive, two constraints instead:
\begin{align}
  \label{eq:bias_const} \text{Bias constraint:}  & &|\E{\mathcal{A}-S}| \leq (1-\theta) \tol, \\
  \label{eq:stat_const0} \text{Statistical constraint:}& & P\left( |\mathcal{A} - \E{\mathcal{A}}| \leq \theta\tol \right)
 \ge 1-\epsilon.
 \end{align}
 Throughout this work, the value of the splitting parameter, $\theta \in (0,1)$, is assumed to be given and remains fixed;  satisfying \eqref{eq:bias_const}
 and \eqref{eq:stat_const0} thus implies that
 \eqref{eq:accuracy_constr} is satisfied. {We refer to \cite{haji_CMLMC,haji_opt} for an analysis of the role of $\theta$ on standard MLMC simulations.}
Motivated by the asymptotic normality of the estimator, $\mathcal{A}$,  shown in Appendix \ref{app:clt}, we replace \eqref{eq:stat_const0}
by
\begin{equation}\label{eq:stat_const}
\var{\mathcal{A}} =  \sum_{\valpha \in \mathcal{I}} \frac{V_\valpha}{M_\valpha} \leq \left( \frac{\theta \tol}{C_\epsilon} \right)^2.
 \end{equation}
 Here, $0<C_\epsilon$ is such that $\Phi(C_\epsilon) = 1 -
 \frac{\epsilon}{2}$, where $\Phi$ is the cumulative distribution
 function of a standard normal random variable.  Using the following
 notation,
\begin{equation}     \label{eq:tolS} \tol_S = \frac{\theta \tol}{C_\epsilon}, \end{equation}
and optimizing the total work \eqref{eq:total_work} with respect to $M_\valpha \in \rsetp$ subject to the statistical constraint \eqref{eq:stat_const} yields
\begin{equation} \label{eq:optimal_M}
M_\valpha = \tol_S^{-2} \left(\sum_{\vec{\tau} \in \mathcal{I}}
  \sqrt{V_{\vec{\tau}} W_{\vec{\tau}} } \right) \sqrt{\frac{V_\valpha}{W_\valpha}},
\text{ for all }\valpha \in \mathcal{I}.
\end{equation}
Of course, in numerical computations, we usually have to take the
integer ceiling of $M_\valpha$ in expression \eqref{eq:optimal_M} or
perform some kind of integer optimization to find $M_\valpha \in
\nset$ for all $\valpha$, cf. \cite{haji_opt}.
For this reason, and to guarantee that at least one sample is used in each multi-index,
$\valpha$, we assume the bound
\begin{equation} \label{eq:optimal_M_bound}
M_\valpha \leq 1 + \tol_S^{-2} \left(\sum_{\vec{\tau} \in \mathcal{I}}
  \sqrt{V_{\vec{\tau}} W_{\vec{\tau}} } \right) \sqrt{\frac{V_\valpha}{W_\valpha}},
\text{ for all }\valpha \in \mathcal{I},
\end{equation}
and bound the total work as follows:
\begin{equation*}
    W \leq \tol_S^{-2} \left(\sum_{\valpha \in \mathcal{I}} \sqrt{V_\valpha
      W_\valpha} \right)^2 + \sum_{\valpha \in \mathcal{I}} W_\valpha.
\end{equation*}

In the current work, we assume the following
\begin{itemize}
\item \textbf{Assumption 1}: The absolute value of the expected value of $\estS_\valpha$, denoted by $E_\valpha$, satisfies
  \begin{align}\label{eq:assu_1}
    E_\valpha = \left|\E{\estS_\valpha}\right| \le Q_W  \prod_{i=1}^d  \beta_i^{-\alpha_i w_i}
  \end{align}
  for constants $Q_W$  and $w_i > 0$ for $i = 1 \ldots d$.
\item \textbf{Assumption 2}: The variance of $\estS_\valpha$, denoted by $V_\valpha$, satisfies
\begin{align}\label{eq:assu_2}
  V_\valpha = \var{\estS_\valpha} &\le  Q_S   \prod_{i=1}^d \beta_i^{-\alpha_i s_i},
\end{align}
for constants $Q_S$ and ${0<s_i\le 2w_i}$ for $i = 1 \ldots d$.
\item \textbf{Assumption 3}: The average work required to compute a
  realization of $\estS_\valpha$, denoted by $W_\valpha$, satisfies
  \begin{align}
    \label{eq:wl_model}
    W_\valpha \le \CWork \prod_{i=1}^d   \beta_{i}^{\alpha_i \gamma_i} ,
  \end{align}
  for constants $\CWork$ and $\gamma_i>0$ for $i = 1 \ldots d$.
\end{itemize}

\begin{remark}[On  Assumptions 1, 2 and 3]
  \label{rem:assumptions}
With sufficient coefficient regularity, Assumptions 1 and 2 hold for the random linear elliptic PDE in Example 1
when discretized by piecewise multilinear continuous finite elements.
Indeed, there is extensive work on this problem based on mixed regularity analysis by
several authors who have developed combination techniques through the years. Here, we refer to the works \cite{Pflaum_composition_97,Pflaum_composition_99,Griebel_H13_composition} and the references therein.
In Example 1, it is enough to apply such estimates point wise in $\omega$ and then to observe that they can
be integrated in $\Omega$, yielding the desired moment estimates in \eqref{eq:assu_1} and \eqref{eq:assu_2}. In Section \ref{sec:prob-overview}, the numerical example has
isotropic behavior over $d=3$ dimensions, the work exponent appearing
in Assumption 3 satisfies $\gamma \in [1,2],$ and the
error exponents are $w_i=s_i/2 =2$ for $i=1,\ldots,3$,
respectively. These exponents have also been confirmed by numerical
experiments, cf. Figures~\ref{fig:time_vs_dof}, \ref{fig:mlmc_el_vl}
and \ref{fig:mimc_el_vl}.
\end{remark}

Under \textbf{Assumptions 2-3}, we estimate the total work, $W$, by
\begin{equation}
  \aligned
  \label{eq:gmlmc_work_model_detailed}
  W(\mathcal I) \le& \tol_S^{-2} Q_S \CWork \,\left(\sum_{\valpha \in \mathcal{I}}
    \prod_{i=1}^d \exp\left(  \frac{\alpha_i \log(\beta_i)( \gamma_i -
        s_i)}{2}\right) \right)^2  \\ &\qquad + \CWork \sum_{\valpha \in \mathcal I}
  \prod_{i=1}^d \exp(\alpha_i \log(\beta_i) \gamma_i).
  \endaligned
\end{equation}
Notice that the second term of the total work is the work needed to
calculate exactly one sample per each multi-index, $\valpha \in
\mathcal I$. This is the minimum cost of a Monte Carlo estimator and
we need to make sure that it does not dominate the first term of the
bound in \eqref{eq:gmlmc_work_model_detailed}.
We define  $\overline{\vec{g}} \in \rset^d$ with entries $\overline g_i =
\frac{\log(\beta_i)( \gamma_i - s_i)}{2} $, for $ i \in \{1,2,\ldots, d\}$ and define
\begin{align}
  \label{eq:gmlmc_worktilde_model}
 \workEst(\mathcal I) &= \sum_{\valpha \in   \mathcal{I}}
 \exp\left( \overline{ \vec{g}} \cdot \valpha \right),\\
  \label{eq:gmlmc_work1_model}  \workRem(\mathcal I) &= \sum_{\valpha \in \mathcal I}
  \prod_{i=1}^d \exp(\alpha_i \gamma_i \log(\beta_i)),
\end{align}
so that the total work can be written as
\begin{equation}
  \label{eq:gmlmc_work_model}
  W(\mathcal I) \le \tol^{-2} Q_S \CWork \left( \workEst(\mathcal
    I) \right)^2 + \CWork \workRem(\mathcal I).
\end{equation}
Then, assuming for that moment that the first term of the bound is dominating the
second term, $\workRem(\mathcal I)$, we can focus on estimating $\workEst(\mathcal I)$ instead of the total
work, $W(\mathcal I)$. In the theorems below, we state sufficient
conditions to ensure that this assumption is indeed satisfied.

One of our goals in this work
is to motivate a choice for the set of multi indices, $\mathcal{I}=\mathcal{I}(\tol)$, to minimize
$\workEst(\mathcal I)$, as an approximation to the minimization of the total work, $W(\mathcal
I)$, subject to the following constraint:
\begin{equation}
  \label{eq:bias_I}
  \text{Bias}(\mathcal I) = \left| \sum_{\valpha \notin \mathcal{I}}
    \E{\estS_\valpha} \right|\leq \sum_{\valpha \notin \mathcal{I}} E_\valpha \leq (1-\theta) \tol.
\end{equation}
Due to \textbf{Assumption 1}, we can rewrite \eqref{eq:bias_I} as
\begin{equation}
  \label{eq:gmlmc_biastilde_model}
  \widetilde B(\mathcal I) = \sum_{\valpha \notin \mathcal I}
  \prod_{i=1}^d \exp(-\log(\beta_i) w_i \alpha_i) \leq \frac{(1-\theta)\tol}{Q_W}.
\end{equation}
  Moreover, we introduce the following notation for the right-hand
  side in  \eqref{eq:gmlmc_biastilde_model}:
  \begin{equation}
    \label{eq:tolB}
\tol_B = \frac{(1-\theta)\tol}{Q_W}.
  \end{equation}

For later use, we introduce the notation $\indset = \{1,2,\ldots, d\}$ and
we define the following sets of direction indices:
\begin{equation}
  \label{eq:ind_sets}
\aligned
\indsetA &= \{i \in I: s_i>\gamma_i\}, \\
\indsetB &= \{i \in I: s_i=\gamma_i\}, \\
\indsetC &= \{i \in I: s_i<\gamma_i\}, \\
\indsetBC  &= \indsetB \cup \indsetC =\{i \in I:
s_i\leq\gamma_i\}
\endaligned
\end{equation}
to distinguish between directions based on the speed of variance
convergence in a direction compared with the rate of increase in the
computational complexity in that direction. Correspondingly, denote
\begin{equation}
  \label{eq:ind_sets_sizes}
  d_1 = \#\indsetA,\quad d_2 = \#\indsetB, \quad d_3 = \#\indsetC
  \quad\text{and}\quad \hat d = \#\indsetBC.
\end{equation}

\subsection{Full Tensor Index Set}
\label{ss:full_tensor}
This section focuses on the special case of a full tensor index set.
Namely, for a given vector $\vec L = (L_1, L_2, \ldots, L_d)$ we
consider the index set
$\mathcal I (\vec L) = \{\valpha \in \nset^d\::\: \alpha_i \leq L_i
\text{ for all }  i \in I\}$.
Note that in this case, $\E{\mathcal A} = S_{\vec L}$, since the sum
telescopes.  Under \textbf{Assumptions 1-3}, the following theorem
outlines the total work of the MIMC estimator when using a full tensor
index set.

\begin{theorem}[Full Tensor Work Complexity]
  \label{thm:gmlmc_ft}
  Under \textbf{Assumptions 1-3}, for $\mathcal{I}(\vec L) =
  \{\valpha\in \nset^d :\alpha_i \leq L_i\text{ for } i \in \indset
  \}$ where $L_i \in \rsetp \cup \{0\}$ for all $i \in \indset$, the
  following choice of $(L_i)_{i=1}^d$ satisfies constraint
  \eqref{eq:bias_const}:
\begin{equation}\label{eq:Liweak}
  L_i  = \frac{\log(\tol_B^{-1}) +
    \log(\mathcal{C_B})}{\log(\beta_i) w_i}\quad \text{ for all } i \in \indset,
\end{equation}
\begin{equation}
  \label{eq:fulltensor_C_Bias}
  \text{where}\qquad
\mathcal{C_B} =d \left( \prod_{j=1}^d \frac{\beta_j^{w_j} }{1-\beta_j^{-w_j}} \right).  \end{equation}
Moreover,  assuming that
\begin{equation}
  \label{eq:ft_assump_dominant}
  \sum_{i \in \indsetA \cup \indsetB} \frac{\gamma_i}{w_i} +
    \sum_{i \in \indsetC} \frac{s_i}{w_i} < 2,
\end{equation}
the optimal total work, $W(\mathcal I)$, of the MIMC estimator,
$\mathcal{A}$, subject to statistical error  constraint
\eqref{eq:stat_const} then satisfies
  \begin{equation*}
        \limsup_{\tol \downarrow 0} \frac{W(\mathcal I)}{\tol^{-2} \left( \prod_{i=1}^d \mathfrak{r}_i \right)^2} \leq
   \frac{C_\epsilon^2 Q_S \CWork}{\theta^2} \prod_{i=1}^d \mathfrak{K}_i^{-2} < \infty,
 \end{equation*}
 \begin{subequations}
   \label{eq:gmlmc_asym_cases}
\begin{equation*}\aligned
    \text{where} \qquad &\mathfrak{r}_i &&= \begin{cases}
  1 & \text{if } s_i > \gamma_i , \\
  \log(\tol^{-1}) & \text{if } s_i = \gamma_i, \\
  \tol^{\frac{ - (\gamma_i-s_i)}{2w_i} } & \text{if } s_i <\gamma_i,
\end{cases}\\
\text{and} \qquad &\mathfrak{K}_i &&= \begin{cases}
  1-\beta_i^{-\frac{s_i-\gamma_i}{2}}  & \text{if } s_i > \gamma_i , \\
  \log (\beta_i) w_i  & \text{if } s_i = \gamma_i, \\
\left( {1-\beta_i^{-\frac{\gamma_i - s_i}{2}}}\right)
\left( \frac{(1-\theta)} {\mathcal{C_B} Q_W} \right)^{\frac{\gamma_i - s_i}{2w_i}}& \text{if } s_i <\gamma_i.
\end{cases}
\endaligned\end{equation*}
\end{subequations}
\end{theorem}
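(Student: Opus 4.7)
The plan is to prove the two parts of the theorem in turn: first the bias constraint, then the asymptotic upper bound on the total work.

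For the bias constraint, I start from the reformulated bound \eqref{eq:gmlmc_biastilde_model} and use the union-bound decomposition
\[\nset^d \setminus \mathcal I(\vec L) \;\subseteq\; \bigcup_{i\in I}\{\valpha\in\nset^d : \alpha_i > L_i\},\]
which allows me to estimate
\[\widetilde B(\mathcal I(\vec L))\;\le\;\sum_{i=1}^d \Bigl(\sum_{\alpha_i>L_i}\beta_i^{-w_i\alpha_i}\Bigr)\prod_{j\ne i}\Bigl(\sum_{\alpha_j\ge 0}\beta_j^{-w_j\alpha_j}\Bigr).\]
Each factor is a geometric series, and bounding the tail as $\sum_{\alpha_i>L_i}\beta_i^{-w_i\alpha_i}\le \beta_i^{-w_i L_i}/(1-\beta_i^{-w_i})$ and using $\sum_{\alpha_j\ge 0}\beta_j^{-w_j\alpha_j}=1/(1-\beta_j^{-w_j})$ yields
\[\widetilde B(\mathcal I(\vec L))\;\le\;\Bigl(\prod_{j=1}^d \tfrac{1}{1-\beta_j^{-w_j}}\Bigr)\sum_{i=1}^d \beta_i^{-w_i L_i}.\]
Plugging in the choice \eqref{eq:Liweak}, so that $\beta_i^{-w_i L_i}=\tol_B/\mathcal{C_B}$, and invoking the definition \eqref{eq:fulltensor_C_Bias} of $\mathcal{C_B}$, every factor collapses to give $\widetilde B(\mathcal I(\vec L))\le \tol_B\prod_{j=1}^d\beta_j^{-w_j}\le \tol_B$, as required.

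For the work estimate, the key observation is that on a full tensor set $\mathcal I(\vec L)$ the quantity $\workEst(\mathcal I)=\sum_\valpha \exp(\overline{\vec g}\cdot\valpha)$ factorises into a product of independent one-dimensional geometric sums,
\[\workEst(\mathcal I(\vec L))\;=\;\prod_{i=1}^d \sum_{\alpha_i=0}^{\lfloor L_i\rfloor}\exp\!\bigl(\overline g_i\,\alpha_i\bigr),\qquad \overline g_i=\tfrac{\log(\beta_i)(\gamma_i-s_i)}{2}.\]
I treat each factor according to the sign of $\overline g_i$: if $\overline g_i<0$ (direction in $\indsetA$), the sum is bounded by $1/(1-\beta_i^{-(s_i-\gamma_i)/2})$; if $\overline g_i=0$ (direction in $\indsetB$), it equals $L_i+1$, which by \eqref{eq:Liweak} is $\Theta(\log\tol^{-1}/(\log(\beta_i)w_i))$; if $\overline g_i>0$ (direction in $\indsetC$), it behaves like $\beta_i^{L_i(\gamma_i-s_i)/2}/(1-\beta_i^{-(\gamma_i-s_i)/2})$, and substituting $\beta_i^{w_i L_i}=\mathcal{C_B}/\tol_B$ converts this into $\tol^{-(\gamma_i-s_i)/(2w_i)}$ times the prescribed constant. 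Multiplying the three kinds of factors and squaring gives $\workEst(\mathcal I(\vec L))^2 \le \bigl(\prod_i\mathfrak r_i\bigr)^2\prod_i\mathfrak K_i^{-2}(1+o(1))$, and inserting the definition $\tol_S=\theta\tol/C_\epsilon$ into $\tol_S^{-2}Q_S\CWork\,\workEst^2$ produces the displayed limsup.

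It remains to show that the leftover term $\CWork\,\workRem(\mathcal I(\vec L))$ in \eqref{eq:gmlmc_work_model} is asymptotically dominated. The same factorisation gives $\workRem(\mathcal I(\vec L))\asymp \prod_i \beta_i^{L_i\gamma_i}\asymp \tol^{-\sum_i\gamma_i/w_i}$ up to constants, while the leading term has order $\tol^{-2-\sum_{i\in\indsetC}(\gamma_i-s_i)/w_i}$ (ignoring logarithms). The ratio is $o(1)$ precisely when
\[\sum_{i\in\indset}\frac{\gamma_i}{w_i} - \sum_{i\in\indsetC}\frac{\gamma_i-s_i}{w_i} \;=\; \sum_{i\in\indsetA\cup\indsetB}\frac{\gamma_i}{w_i} + \sum_{i\in\indsetC}\frac{s_i}{w_i}\;<\;2,\]
which is exactly the assumed inequality \eqref{eq:ft_assump_dominant}. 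The only delicate step I anticipate is the bookkeeping between the three regimes when combining the one-dimensional sums into a single clean constant $\prod_i\mathfrak K_i^{-2}$, since the dependence on $\tol$ in the mixed regime coming from the factor $\bigl((1-\theta)/(\mathcal{C_B}Q_W)\bigr)^{(\gamma_i-s_i)/(2w_i)}$ must be tracked through the substitution $\tol_B=(1-\theta)\tol/Q_W$; otherwise the rest of the argument is a direct geometric sum computation.
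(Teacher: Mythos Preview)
Your proposal is correct and follows essentially the same approach as the paper's proof: the union-bound decomposition of $\nset^d\setminus\mathcal I(\vec L)$ followed by geometric-series bounds for the bias, the product factorisation of $\workEst$ over the full tensor set with a three-way case split on the sign of $\overline g_i$, and the comparison of the exponents in $\workRem$ versus the leading term to invoke \eqref{eq:ft_assump_dominant}. Your tail bound $\sum_{\alpha_i>L_i}\beta_i^{-w_i\alpha_i}\le \beta_i^{-w_iL_i}/(1-\beta_i^{-w_i})$ is in fact slightly sharper than the paper's (which uses $\beta_i^{-w_i(L_i-1)}$ to absorb the floor), so you end up with $\widetilde B\le \tol_B\prod_j\beta_j^{-w_j}$ rather than just $\tol_B$, but this is a cosmetic difference and the argument is otherwise identical.
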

\begin{proof}
  First, for convenience, we introduce the following notation for all
  $i \in \indset$:
\begin{align}
  \label{eq:rates_not}
  \overline s_i = \log(  {\beta_i}) s_i, \quad   \overline w_i = \log(
    {\beta_i}) w_i, \quad  \overline \gamma_i = \log(
    {\beta_i}) \gamma_i,
  \end{align}
  and correspondingly the following vectors:
  \begin{align}
  \label{eq:rates_not_vec}
  \overline {\vec s} = (\overline s_i)_{i \in \indset}, \quad
  \overline {\vec w} = (\overline w_i)_{i \in \indset}, \quad
  \overline {\vec \gamma} = (\overline \gamma_i)_{i \in \indset}.
  \end{align}

  Then, by \textbf{Assumption} \textbf{1},
  starting from \eqref{eq:gmlmc_biastilde_model}, we have
   $$\aligned
   \widetilde B(\mathcal I(\vec L)) &=
   \sum_{\valpha \notin \mathcal I(\vec L)} \prod_{i=1}^d
   \exp(-\overline w_i \alpha_i) \\
 &\le \sum_{i=1}^d \left\{ \sum_{\{\valpha \::\: \alpha_i >
     L_i \}} \prod_{j=1}^d
    \exp(-\overline w_j \alpha_j)  \right\} \\
   &\le
   \sum_{i=1}^d  \left\{
   {\left( \prod_{j\ne i}\frac{\exp(\overline w_j)}{\exp(\overline w_j) -1} \right)} \sum_{\alpha_i>\lfloor L_i\rfloor}\exp(-\overline w_i \alpha_i ) \right\}\\
   &\le \left( \prod_{j=1}^d\frac{\exp(\overline w_j)}{\exp(\overline w_j) -1} \right) \sum_{i=1}^d     {\exp(-\overline w_i (L_i-1))}.
   \endaligned
   $$
   Recall \eqref{eq:tolB}. Then, making each of the terms in the previous sum less than $\tol_B/d$ to satisfy
\eqref{eq:gmlmc_biastilde_model}
yields the following condition on $L_i$:
\begin{equation*}
  L_i  \geq \frac{\log(\tol_B^{-1}) +
    \log(\mathcal{C_B})}{\log(\beta_i) w_i}\quad \text{ for all } i \in \indset,
\end{equation*}
which is satisfied by \eqref{eq:Liweak}.
On the other hand, using definition \eqref{eq:gmlmc_worktilde_model},
 we have
 \begin{equation*}
 \aligned
 \workEst(\mathcal I(\vec L))
 &{=\sum_{\valpha\in \mathcal I}\prod_{i=1}^d \exp(\overline g_i\alpha_i)}\leq  \prod_{i=1}^d
   \sum_{\alpha_i=0}^{\lfloor L_i \rfloor}  \exp\left( \overline g_i
  {\alpha_i}\right) \\
 &\le
 \prod_{i\in \indsetA} \frac{1}{1-\exp(\overline g_i)} \,
 \prod_{i\in \indsetB}  {(L_i+1)}\,
 \prod_{i\in \indsetC}  \frac{\exp(\overline g_iL_i ) -
   \exp(-\overline g_i)}{1-\exp(-\overline g_i)}.
\endaligned
\end{equation*}
From here and using \eqref{eq:Liweak}, it is easy to verify that
\begin{equation}
  \label{eq:prelimwork}
  \limsup_{\tol \downarrow 0} \frac{\workEst(\mathcal I)}{\prod_{i=1}^d
    \mathfrak{r}_i} \leq \prod_{i=1}^d \mathfrak{K}_i^{-1}.
\end{equation}
Similarly, using definition \eqref{eq:gmlmc_work1_model} and \eqref{eq:Liweak}, we have
\begin{equation*} \aligned
  \workRem(\mathcal I) &= \sum_{\valpha\in \mathcal
    I}\prod_{i=1}^d \exp(\overline \gamma_i\alpha_i)\leq
  \prod_{i=1}^d   \sum_{\alpha_i=0}^{\lfloor L_i \rfloor}  \exp\left(
    \overline \gamma_i  {\alpha_i}\right) \\
 &\le \prod_{i=1}^d  \frac{\exp(\overline \gamma_i L_i ) -
   \exp(-\overline \gamma_i)}{1-\exp(-\overline \gamma_i)} \\
 &= \Order{\tol^{-\sum_{i=1}^d \frac{\gamma_i}{w_i}}}.  \,
 \endaligned
\end{equation*}
Then, due to \eqref{eq:ft_assump_dominant}, the first term in
\eqref{eq:gmlmc_work_model} dominates $\workRem(\mathcal I)$ as
$\tol \downarrow 0$.
The proof finishes by combining
\eqref{eq:prelimwork} and \eqref{eq:gmlmc_worktilde_model}.
\end{proof}

\inchangesA{ The work estimates in the previous theorem require
  the restrictive condition \eqref{eq:ft_assump_dominant} to be satisfied.
  However, we can relax this condition and obtain better work
  complexity by carefully choosing the index set, $\mathcal I$, as the
  next section shows.  }
 
\makeatletter{}\subsection{Optimal Index Sets}\label{ss:td_set}
We discuss in this section how to find optimal index sets, $\mathcal I$.
The objective is to solve the following optimization problem:
\[ \min_{ \mathcal I \subset \nset^d } W(\mathcal I)\quad \text{ such
  that }\quad \text{Bias}(\mathcal I) \leq (1-\theta)\tol. \] We choose
the number of samples according to \eqref{eq:optimal_M} and
use the upper bound of the work \eqref{eq:gmlmc_work_model} and the
upper bound of the bias \eqref{eq:bias_I}.  Moreover, we assume that
the first term in \eqref{eq:gmlmc_work_model} dominates the
second. Based on this, we instead solve the following simplified
problem:
\begin{equation}
\min_{ \mathcal I \subset \nset^d } \widetilde W(\mathcal I)\quad
\text{ such that }\quad \widetilde B(\mathcal I) \leq \tol_B
\label{eq:optI_problem}
\end{equation}
\inchangesA{ to get a quasi-optimal index set, $\mathcal I$.  Here,
  $\widetilde W$ is defined in \eqref{eq:gmlmc_worktilde_model} and
  $\widetilde B$ is defined in \eqref{eq:gmlmc_biastilde_model}.  In
  what follows, we discuss how to solve the optimization problem
  \eqref{eq:optI_problem}.  In the rest of this section, with a slight
  abuse of terminology, we refer to the objective $\widetilde W$ as
  the ``work'' and the constraint function $\widetilde B$ as the
  ``error''.}

Similar to \cite{nobile2014convergence}, the optimization problem
\eqref{eq:optI_problem} can be recast into a knapsack
problem where a ``profit'' indicator is assigned to each index and
only the most profitable indices are added to $\mathcal I$.  Let us
define the profit, $\mathcal{P}_\valpha =
\frac{\varepsilon_\valpha}{\varpi_\valpha}$, of a multi-index,
$\valpha$, in terms of its error contribution, denoted here by
$\varepsilon_\valpha$, and its work contribution, denoted here by
$\varpi_\valpha$.  Moreover, define the total error associated with an
index set, $\mathcal{I}$, as
\[  \mathfrak E(\mathcal{I}) = \sum_{\valpha\notin\mathcal{I}} \varepsilon_\valpha \]
and the corresponding total work as
\[  \mathfrak W(\mathcal{I}) = \sum_{\valpha\in\mathcal{I}} \varpi_\valpha.  \]
Intuitively, we may think of $\mathfrak E(\mathcal{I})$ as a sharp upper bound for
$\widetilde B(\mathcal I)$ and $\mathfrak W(\mathcal I)$ as
a correspondingly sharp lower bound for $\widetilde W(\mathcal I)$.
  Then, we can show the following optimality result with respect to
  $\mathfrak E(\mathcal{I})$  and  $\mathfrak W(\mathcal I)$, namely:
\begin{lemma}[Optimal profit sets]\label{lem:optimality}
The set $\mathcal{I}(\nu) = \{\valpha \in \nset^d:  \mathcal{P}_\valpha \ge \nu\}$ is optimal in the
sense that any other set, $\tilde{\mathcal{I}}$,
with  smaller work, $\mathfrak W(\tilde{\mathcal{I}})< \mathfrak W(\mathcal{I}(\nu))$, leads to a larger error, $\mathfrak E(\tilde{\mathcal{I}})> \mathfrak E(\mathcal{I}(\nu))$.
\end{lemma}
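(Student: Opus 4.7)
The argument I have in mind is a standard exchange argument of the type used for fractional knapsack problems: since $\mathcal{I}(\nu)$ collects exactly the indices whose error-per-unit-work ratio lies above the threshold $\nu$, swapping any of its elements for elements of lower profit must be penalized by either a work increase or an error increase.

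Concretely, given an arbitrary competitor $\tilde{\mathcal{I}}$, I would introduce the two disjoint exchange sets
\[
A = \mathcal{I}(\nu) \setminus \tilde{\mathcal{I}}, \qquad B = \tilde{\mathcal{I}} \setminus \mathcal{I}(\nu).
\]
Using the definitions of $\mathfrak W$ and $\mathfrak E$, and the fact that $\mathcal{I}(\nu) \cap \tilde{\mathcal{I}}$ contributes identically to both expressions, I would write the work and error differences as
\[
\mathfrak W(\tilde{\mathcal{I}}) - \mathfrak W(\mathcal{I}(\nu)) = \sum_{\valpha \in B} \varpi_\valpha - \sum_{\valpha \in A} \varpi_\valpha,
\]
\[
\mathfrak E(\tilde{\mathcal{I}}) - \mathfrak E(\mathcal{I}(\nu)) = \sum_{\valpha \in A} \varepsilon_\valpha - \sum_{\valpha \in B} \varepsilon_\valpha,
\]
where in the second line I used that the complement of $\tilde{\mathcal{I}}$ differs from the complement of $\mathcal{I}(\nu)$ exactly by removing $B$ and adding $A$.

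The hypothesis $\mathfrak W(\tilde{\mathcal{I}}) < \mathfrak W(\mathcal{I}(\nu))$ then gives $\sum_{A} \varpi_\valpha > \sum_{B} \varpi_\valpha$, which in particular forces $A \neq \emptyset$. Next, I would plug in the profit inequalities $\varepsilon_\valpha \ge \nu\,\varpi_\valpha$ for $\valpha \in A \subset \mathcal{I}(\nu)$ and $\varepsilon_\valpha < \nu\,\varpi_\valpha$ for $\valpha \in B \subset \mathcal{I}(\nu)^c$, and chain them together as
\[
\sum_{\valpha \in A} \varepsilon_\valpha \;\ge\; \nu \sum_{\valpha \in A} \varpi_\valpha \;>\; \nu \sum_{\valpha \in B} \varpi_\valpha \;\ge\; \sum_{\valpha \in B} \varepsilon_\valpha,
\]
which yields $\mathfrak E(\tilde{\mathcal{I}}) > \mathfrak E(\mathcal{I}(\nu))$, as claimed.

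The only subtlety, and the place that will require the most care, is ensuring that the strict inequality propagates in the degenerate cases, most notably when $B = \emptyset$: there the middle inequality above becomes an equality, but then $A \neq \emptyset$ together with $\varpi_\valpha > 0$ and $\varepsilon_\valpha > 0$ (which I would state as a standing positivity assumption on the error and work contributions) still gives $\sum_A \varepsilon_\valpha > 0 = \sum_B \varepsilon_\valpha$. An analogous short check handles the case where the profit inequality on $B$ is not strict. Beyond these bookkeeping details, the proof is essentially the two displays above.
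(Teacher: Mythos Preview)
Your proposal is correct and follows essentially the same exchange argument as the paper: the paper partitions $\nset^d$ into four pieces $\mathcal J_1,\ldots,\mathcal J_4$ according to membership in $\mathcal I(\nu)$ and $\tilde{\mathcal I}$, but only $\mathcal J_1=\mathcal I(\nu)\setminus\tilde{\mathcal I}$ and $\mathcal J_3=\tilde{\mathcal I}\setminus\mathcal I(\nu)$ survive in the work and error differences, which are exactly your sets $A$ and $B$. The paper then bounds $\mathfrak E(\mathcal I(\nu))-\mathfrak E(\tilde{\mathcal I})=\sum_{\mathcal J_3}\mathcal P_\valpha\varpi_\valpha-\sum_{\mathcal J_1}\mathcal P_\valpha\varpi_\valpha\le\nu\bigl(\sum_{\mathcal J_3}\varpi_\valpha-\sum_{\mathcal J_1}\varpi_\valpha\bigr)<0$, obtaining the strict inequality from the work hypothesis just as you do; your additional remarks on the degenerate case $B=\emptyset$ are more careful than the paper's presentation but not a different idea.
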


\begin{proof}
We have that for any $\valpha \in \mathcal{I}(\nu)$ and $\hat \valpha \notin {\mathcal{I}(\nu)}$
\[
\mathcal{P}_{\valpha} \geq \nu \qquad \text{and}\qquad
\mathcal{P}_{\hat \valpha} < \nu.
\]
  Now, take  an arbitrary index set, $\tilde{\mathcal{I}}$, such that
  $\mathfrak W(\tilde{\mathcal{I}}) < \mathfrak W(\mathcal{I}(\nu))$
  and divide $\nset^d$ into the following disjoint sets:
\begin{align*}
  \mathcal J_1 &= \mathcal I(\nu) \cap \tilde{\mathcal I}^c,\qquad
  &\mathcal J_2 &= \mathcal I(\nu) \cap \tilde{ \mathcal I},\\
  \mathcal J_3 &=  \mathcal I(\nu)^c \cap \tilde{ \mathcal I},\qquad
  &\mathcal J_4 &= \mathcal I(\nu)^c \cap \tilde{ \mathcal I}^c,
\end{align*}
where $\mathcal I(\nu)^c$ is the complement of the set $\mathcal I(\nu)$. Then,
\[
\mathfrak W(\mathcal I(\nu)) - \mathfrak W(\tilde{ \mathcal I}) =
\sum_{\valpha \in \mathcal J_1 \cup \mathcal J_2} \varpi_\valpha -
\sum_{\valpha \in J_2 \cup J_3} \varpi_\valpha = \sum_{\valpha \in
  \mathcal J_1} \varpi_\valpha - \sum_{\valpha \in \mathcal J_3}
\varpi_\valpha > 0,
\]
and
\[
\mathfrak E(\mathcal I(\nu)) - \mathfrak E(\tilde{ \mathcal I}) =
\sum_{\valpha \in \mathcal J_3 \cup \mathcal J_4} \varepsilon_\valpha -
\sum_{\valpha \in \mathcal J_1 \cup \mathcal J_4} \varepsilon_\valpha =
\sum_{\valpha \in \mathcal J_3} \mathcal{P}_\valpha \varpi_\valpha - \sum_{\valpha \in
  \mathcal J_1} \mathcal P_\valpha \varpi_\valpha.
\]
Then,
\[\mathfrak E(\mathcal I(\nu)) - \mathfrak E(\tilde{ \mathcal I}) \leq \nu \left( \sum_{\valpha \in \mathcal J_3} \varpi_\valpha - \sum_{\valpha \in
  \mathcal J_1} \mathcal \varpi_\valpha \right) < 0.\]
\end{proof}

For MIMC, under \textbf{Assumptions 1-3}, $\varepsilon_\valpha$ can be
taken to be the bias contribution of the term $\estS_\valpha$, i.e.,
$\varepsilon_\valpha = E_\valpha$. Additionally, the work contribution
can also be taken as $\varpi_\valpha = \sqrt{V_\valpha
  W_\valpha}$.     Using the estimates in \textbf{Assumptions 1-3} as
sharp approximations to their counterparts, the profits in our problem
are approximated correspondingly by
$$
\mathcal{P}_\valpha \approx C_P \prod_{i=1}^d  e^{-\alpha_i\log(\beta_i) (w_i+\frac{\gamma_i-s_i}{2})},
$$
for some constant $C_P > 0$.
Therefore, ordering the profits according to level sets as in Lemma
\ref{lem:optimality}, yields optimal index sets of multi indices that
are of anisotropic total degree (TD) type.
 Let us introduce strictly positive normalized weights defined by
\begin{equation}
  \label{eq:optimal_weights}
  \aligned
\delta_i &= \frac{\log(\beta_i)
  (w_i+\frac{\gamma_i-s_i}{2})}{C_\vdelta},\quad\text{for all } \,i \in \indset, \\
\text{where}\qquad C_\vdelta &= \sum_{j=1}^d \log(\beta_j)
(w_j+\frac{\gamma_j-s_j}{2}).
\endaligned
\end{equation}
Observe that
\begin{equation}
  \label{eq:delta_cond}
  \sum_{i \in \indset} \delta_i = 1 \quad \text{and} \quad  0<\delta_i\le 1,
\end{equation}
since $s_i\le 2w_i$ and $\gamma_i>0$ by assumption.
Then, for $L=0,1,\ldots$, we introduce a family of TD index sets:
\begin{equation}\label{eq:idxset_anisoTD}
\mathcal{I}_\vdelta(L) = \{\valpha\in \nset^d: \valpha \cdot \vdelta = \sum_{i=1}^d \delta_i\alpha_i \le L\}.
\end{equation}
In our numerical example, presented in Section~\ref{s:res},
Figure~\ref{fig:mimc_contours} suggests that the TD index set is indeed the
optimal index set in this case.

The current section continues by first considering a general vector of
weights, $\vdelta$, that satisfies only \eqref{eq:delta_cond}. We find
a value of $L$ that satisfies the bias constraint in
Lemma~\ref{lem:gmlmc_td_L_gen} then derive the resulting
computational complexity in Lemma~\ref{lem:gmlmc_td_work_gen}. Next,
we present our main result in Theorem~\ref{thm:gmlmc_td_opt} when
using the optimal weights of \eqref{eq:optimal_weights}. Finally, we
conclude this section with a few remarks about special cases. In the
following theorems, given a general vector of weights, $\vdelta$, we
introduce the following notation:
\begin{subequations}
  \label{eq:max_rates}
  \begin{align}
    \eta &= \min_{i \in \indset} \frac{\log(\beta_i)w_i}{\delta_i}, &
    \mathfrak{e} &= \#\{ i \in \indset : \frac{\log(\beta_i)w_i}{\delta_i} = \eta\},\\
    \Gamma &= \max_{i \in \indset}  \frac{\log(\beta_i)\gamma_i}{\delta_i}, &
    \mathfrak{g} &= \#\{ i \in \indset : \frac{\log(\beta_i)\gamma_i}{\delta_i} = \Gamma\}, \\
    \chi &= \max_{i \in \indset} \frac{\log(\beta_i)(\gamma_i - s_i)}{2\delta_i}, &
  \mathfrak{x} &= \#\{ i \in \indset : \frac{\log(\beta_i)(\gamma_i - s_i)}{2\delta_i} = \chi\},\\
    \zeta &= \max_{i \in \indset} \frac{\gamma_i - s_i}{2w_i}, &
    \mathfrak{z} &= \#\{ i \in \indset : \frac{\gamma_i - s_i}{2w_i} =
    \zeta\},\\
    \xi &= \min_{i \in I} \frac{2 w_i - s_i}{\gamma_i}.
  \end{align}
\end{subequations}
\begin{lemma}[$L$ of MIMC with general $\vdelta$]
  \label{lem:gmlmc_td_L_gen}
  Consider the multi-index sets $\mathcal{I}_\vdelta(L) = \{ \valpha \in \nset^d :
  \vdelta \cdot \valpha \leq L\}$ with given weights $\vdelta \in
  \rsetp^d$ satisfying \eqref{eq:delta_cond} and $L \in \rset_+ \cup \{0\}$.
If \textbf{Assumption 1} holds, then, to satisfy the following bias inequality,
\begin{equation}
  \label{eq:bias_asymb}
\lim_{\tol \downarrow 0} \frac{\widetilde B(\mathcal I_\vdelta(L))}{\tol_B} \leq 1,
\end{equation}
with $\widetilde B(\mathcal I_\vdelta(L))$ as defined in
\eqref{eq:gmlmc_biastilde_model}, we can take $L$ as follows:
\begin{equation}
  \label{eq:L_td_set_general}
L = \frac{1}{\eta }\left(  \log(\tol_B^{-1}) + \left( \mathfrak{e}-1 \right)
  \log\left( \frac{1}{\eta }
    \log(\tol_B^{-1}) \right) + \log({C_{\textnormal{Bias}}}) \right).
\end{equation}
Here, ${C_{\textnormal{Bias}}}$ is given by
\begin{equation}\label{eq:CBias_td}
  \aligned
  {C_{\textnormal{Bias}}} = \exp(|\overline{\vec w}|)
  \left(\prod_{i=1}^d \delta_i^{-1}\right) \mathfrak{C_B}\left( \left(
    \delta_i^{-1} \log(\beta_i) w_i \right)_{i=1}^d \right),
\endaligned
\end{equation}
and  $\mathfrak{C_B}$ is defined in \eqref{eq:exp_int_bound_CB}.
\end{lemma}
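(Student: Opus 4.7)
The plan is to bound the tail sum $\widetilde B(\mathcal I_\vdelta(L))$ by a continuous integral over the half-space $\{\vec x \in \rsetp^d : \vdelta\cdot \vec x > L\}$, evaluate this integral via the Laplace-type estimate $\mathfrak{C_B}$ from \eqref{eq:exp_int_bound_CB}, and then asymptotically invert the resulting inequality to solve for $L$ as a function of $\tol_B$.

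First, I would convert the discrete tail sum into an integral bound by associating to each $\valpha \notin \mathcal I_\vdelta(L)$ the unit cube $\prod_i [\alpha_i, \alpha_i+1]$. Using $e^{-\overline w_i \alpha_i} = e^{\overline w_i}\int_{\alpha_i}^{\alpha_i+1} e^{-\overline w_i x_i}\,dx_i$, the disjointness of these cubes, and the fact that $\vdelta$ has positive entries (so $\vec x \ge \valpha$ componentwise forces $\vdelta\cdot \vec x > L$ whenever $\vdelta\cdot \valpha > L$), one obtains
\begin{equation*}
\widetilde B(\mathcal I_\vdelta(L)) \;\le\; e^{|\overline{\vec w}|} \int_{\{\vec x \in \rsetp^d \,:\, \vdelta\cdot\vec x > L\}} e^{-\overline{\vec w}\cdot \vec x}\,d\vec x.
\end{equation*}

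Next, I would apply the change of variables $y_i = \delta_i x_i$ (Jacobian $\prod_i \delta_i^{-1}$), which reduces the constraint to $\sum_i y_i > L$ with exponential rates $\overline w_i/\delta_i = \delta_i^{-1}\log(\beta_i)w_i$. By \eqref{eq:exp_int_bound_CB}, this transformed integral is bounded by $\mathfrak{C_B}\!\left((\delta_i^{-1}\log(\beta_i)w_i)_{i=1}^d\right) L^{\mathfrak e - 1}e^{-\eta L}$, where $\eta$ is precisely the minimum of the scaled rates and $\mathfrak e$ counts the directions achieving it, as in \eqref{eq:max_rates}. Collecting the prefactors $e^{|\overline{\vec w}|}$ and $\prod_i \delta_i^{-1}$ reproduces the constant ${C_{\textnormal{Bias}}}$ of \eqref{eq:CBias_td}, yielding the clean estimate $\widetilde B(\mathcal I_\vdelta(L))\le {C_{\textnormal{Bias}}}\, L^{\mathfrak e-1} e^{-\eta L}$.

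Finally, to enforce \eqref{eq:bias_asymb} I would impose ${C_{\textnormal{Bias}}}\, L^{\mathfrak e - 1} e^{-\eta L} \le \tol_B$, which after taking logarithms becomes the transcendental inequality $\eta L - (\mathfrak e - 1)\log L \ge \log\tol_B^{-1} + \log {C_{\textnormal{Bias}}}$. I would solve this asymptotically by a self-consistent bootstrap: the leading ansatz $L \sim \eta^{-1}\log\tol_B^{-1}$ implies $\log L \sim \log\!\left(\eta^{-1}\log\tol_B^{-1}\right)$, and substituting this back into the correction term recovers precisely the expression \eqref{eq:L_td_set_general}. The main obstacle is this final step: one must carefully track the subleading $O(\log\log\tol_B^{-1})$ correction and verify that it exactly balances the $(\mathfrak e-1)\log L$ term up to remainders that vanish in the ratio $\widetilde B/\tol_B$ as $\tol\downarrow 0$, which in particular requires the $\mathfrak e = 1$ and $\mathfrak e > 1$ cases to be treated in a unified way via the proposed formula.
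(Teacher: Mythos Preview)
Your approach is essentially identical to the paper's proof: both bound the tail sum by an integral via the unit-cube trick (the paper writes this as $\int_{\{\vdelta\cdot\vec x\ge L\}}\exp(-\overline{\vec w}\cdot(\vec x-\vec 1))\,d\vec x$), change variables $y_i=\delta_i x_i$, apply Lemma~\ref{lem:bias_bound}, and then---rather than your bootstrap inversion---simply substitute the stated $L$ and check the limit $\widetilde B/\tol_B\to 1$ directly. One slip: your displayed relation $e^{-\overline w_i\alpha_i}=e^{\overline w_i}\int_{\alpha_i}^{\alpha_i+1}e^{-\overline w_i x_i}\,dx_i$ is an inequality ($\le$), not an equality (the right side equals $e^{-\overline w_i\alpha_i}\cdot(e^{\overline w_i}-1)/\overline w_i$), but this is the direction you need and the remainder goes through unchanged.
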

\begin{proof}
  For small enough $\tol$, such that $L\geq 1$ in
  \eqref{eq:L_td_set_general}, we have, using Lemma \ref{lem:bias_bound}
\begin{align*}
  \widetilde B(\mathcal I_\vdelta(L)) &= \sumlim{\valpha \in
    \nset^d}{\valpha \cdot \vdelta > L}
  \exp(-\overline{\vec w} \cdot \valpha)\\
  &\leq \intlim{\vec x \in \rsetp^d}{\vec x \cdot \vdelta \geq L}
  \exp(- \overline{\vec w} \cdot (\vec x - \vec 1) )\: \textnormal{d}\vec{x} \\
  &= \exp(|\overline{\vec{w}}|) \left(\prod_{i=1}^d \delta_i^{-1} \right)
  \intlim{\vec x \in \rsetp^d}{|\vec x| \geq L} \exp\left(-\sum_{i=1}^d
  \delta_{i}^{-1} x_i\overline w_i \right)\:
  \textnormal{d}\vec{x} \\
  &\leq \exp(|\overline{\vec{w}}|) \left(\prod_{i=1}^d \delta_i^{-1}
  \right)
  \mathfrak{C_{B}}\left(\left(
    \delta_i^{-1} \log(\beta_i) w_i \right)_{i=1}^d \right) \exp(-L \eta)L^{\mathfrak{e}-1}.
\end{align*}
Substituting $L$ from  $\eqref{eq:L_td_set_general}$ and taking the
limit $\tol\downarrow 0$ yields
\[
\aligned
&\lim_{\tol \downarrow 0} \frac{\widetilde B(\mathcal
  I_\vdelta)}{\tol_B} \\
&\leq
\lim_{\tol \downarrow 0} \left( 1 +  \frac{ \left( \mathfrak{e}-1 \right) \log\left(\frac{1}{\eta}
  \log(\tol_B^{-1}) \right) + \log({C_{\textnormal{Bias}} })}{\log\left( \tol_B^{-1}
\right)} \right)^{\mathfrak{e}-1} = 1,
\endaligned
\]
which finishes the proof.
\end{proof}

\begin{lemma}[Work estimate of MIMC with general $\vdelta$]
  \label{lem:gmlmc_td_work_gen}
   Consider the multi-index sets $\mathcal{I}_\vdelta(L) = \{ \valpha \in \nset^d :
  \vdelta \cdot \valpha \leq L\}$ with given weights $\vdelta \in
  \rsetp^d$ satisfying \eqref{eq:delta_cond} and
  take  $L$ as \eqref{eq:L_td_set_general}.
  Under \textbf{Assumptions 1-3}, the bias inequality
  \eqref{eq:bias_asymb} is satisfied and the total work, $W(\mathcal
  I_\vdelta)$, of the MIMC estimator, $\mathcal{A}$, subject to
  constraint \eqref{eq:stat_const} satisfies
  \begin{equation}
    \label{eq:work_td_limit_caseA}
    \limsup_{\tol \downarrow 0}
    \frac{W(\mathcal I_\vdelta)}{\tol^{-2 \left( 1 + \max\left\{0,
          \frac{\chi}{\eta},  \frac{\Gamma - 2\eta}{2\eta}
          \right\}  \right)} \left(
        \log\left(\tol^{-1} \right) \right)^{\mathfrak{p}}} \leq
    \CWork \mathcal{C} < \infty,
  \end{equation}
  where
\begin{itemize}
\item[\textbf{Case A)}]
  $ \displaystyle \aligned[t]
    \text{if } \chi \leq 0 \text{ and }&\text{either} \qquad & \Gamma< 2 \eta&, \\
    &\text{ or} & \Gamma = 2 \eta
  &\quad \text{ and } \quad 2\mathfrak{e} +\mathfrak{g} < 2d_2 +
  3,
  \endaligned$ \\ then $\mathfrak{p} = 2d_2$ and $ \mathcal{C} = {Q_S
    C_\epsilon^2 }{\theta^{-2} C_A^{-2}}$,
    \begin{equation}
      \label{eq:CW_caseA}
      \aligned
      \text{where} \quad
      {C_A} &= \left( \prod_{i \in \indsetA} \left(
          1-\beta_i^{-\frac{s_i-\gamma_i}{2}} \right) \right) \left(
        {\prod_{j\in \indsetB} \delta_{j}} \right) \eta^{d_2} d_2! .
      \endaligned
    \end{equation}
  \item[\textbf{Case B)}]
    $ \displaystyle \aligned[t]
        \text{If } \chi > 0 \text{ and }&\text{either} \qquad & \Gamma < 2 \eta + 2\chi&, \\
        &\text{or} & \Gamma = 2 \eta + 2\chi &\quad \text{ and
    } \quad 2 \mathfrak{e} + \mathfrak{g} < 2\mathfrak{x} +
    1,
    \endaligned$
    \\ then, $\mathfrak{p} = 2 \left(
      \mathfrak{x}-1+{\frac{(\mathfrak{e}-1) \chi}{\eta}}
    \right)$ and $ \mathcal{C} = {Q_S C_\epsilon^2
    }{\theta^{-2} C_B^{-2}}$, where
    \begin{equation}
      \label{eq:CW_caseB}
      \aligned      {C_B} &=
      \frac{      \prod_{i \in \indsetA} \left(1-\beta_i^{-\frac{s_i-\gamma_i}{2}} \right)}
      {      \prod_{i \in \indsetBC} \delta_i^{-1}} \cdot
      \frac{\exp\left(-\chi \right)  \eta^{\frac{\mathfrak{p}}{2}}}
      {\mathfrak{C_W} \left( \left( \delta_i^{-1} \overline g_i \right)_{i \in
            \indsetBC} \right)} \cdot
      \left( \frac{1-\theta}{{C_{\textnormal{Bias}}} Q_W}\right)^{\frac{\chi}{\eta}} .
      \endaligned
    \end{equation}
\item[\textbf{Case C)}]
\inchangesA{  $ \displaystyle \aligned[t]
  \text{if } \chi \leq 0 \text{ and }&\text{either} \qquad & \Gamma > 2 \eta&, \\
  &\text{ or} & \Gamma = 2 \eta
  &\quad \text{ and } \quad 2\mathfrak{e} +\mathfrak{g} \geq 2d_2 +
  3,
  \endaligned$ \\
then $\mathfrak{p} = \mathfrak{g}-1 + (\mathfrak{e}-1)
   \frac{\Gamma}{\eta}$ and $\mathcal{C} = \mathscr{I}_C
   {Q_S C_\epsilon^2 }{\theta^{-2} {C_A}^{-2}} + C_R^{-1}$, where
       \begin{align}
         \label{eq:work_rem_constant}
         C_R &= \left(\prod_{i \in \indset} \delta_i \right)
         \frac{\exp\left(-\Gamma \right) \eta^{\mathfrak{p}} }
         {\mathfrak{C_W}( \left( \delta_i^{-1} \overline \gamma_i
           \right)_{i \in \indset})}
         \left( \frac{1-\theta}{C_{\textnormal{Bias}} Q_W}\right)^{\frac{\Gamma}{\eta}} ,\\
         \label{eq:ind_caseC}
         \text{and} \qquad \mathscr{I}_C &= \begin{cases}
           1 & \text{if } {\Gamma = 2\eta \text{ and } 2\mathfrak{e} +\mathfrak{g} = 2d_2 + 3}, \\
           0 & \text{if } {\Gamma > 2\eta \text{ or } 2\mathfrak{e} +\mathfrak{g} > 2d_2 + 3}.
         \end{cases}
       \end{align}}
\item[\textbf{Case D)}]
\inchangesA{
    $ \displaystyle \aligned[t]
        \text{If } \chi > 0 \text{ and }&\text{either} \qquad & \Gamma > 2 \eta + 2\chi&, \\
        &\text{or} & \Gamma = 2 \eta + 2\chi &\quad \text{ and
    } \quad 2 \mathfrak{e} + \mathfrak{g} \geq 2\mathfrak{x} +
    1,
    \endaligned$
    \\          then $\mathfrak{p} = \mathfrak{g}-1 + (\mathfrak{e}-1)
         \frac{\Gamma}{\eta}$ and $\mathcal{C} = \mathscr{I}_D
   {Q_S C_\epsilon^2 }{\theta^{-2} {C_B}^{-2}} + C_R^{-1}$, where
\begin{equation}
  \label{eq:ind_caseD}
  \mathscr{I}_D = \begin{cases} 1 & \text{if } \Gamma = 2\eta + 2\chi
    \text{ and }  2 \mathfrak{e} + \mathfrak{g} = 2\mathfrak{x}+1, \\
    0 & \text{if }\Gamma > 2\eta + 2\chi
    \text{ or } 2 \mathfrak{e} + \mathfrak{g} >
    2\mathfrak{x}+1.
         \end{cases}
       \end{equation}}
\end{itemize}
\end{lemma}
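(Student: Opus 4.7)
The plan is to combine the work decomposition $W(\mathcal I_\vdelta)\leq \tol^{-2}Q_S\CWork\,\workEst(\mathcal I_\vdelta)^2 + \CWork\workRem(\mathcal I_\vdelta)$ from \eqref{eq:gmlmc_work_model} with sharp asymptotic estimates of the two sums $\workEst$ and $\workRem$ over the simplex $\{\valpha\cdot\vdelta\leq L\}$, and then to substitute the value of $L$ supplied by Lemma~\ref{lem:gmlmc_td_L_gen}. Since that lemma already delivers the bias inequality \eqref{eq:bias_asymb}, the work estimate is the only remaining content, so the core task is purely a sum-over-simplex calculation followed by a comparison of the two resulting scalings.

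First I would estimate $\workEst(\mathcal I_\vdelta(L))=\sum_{\valpha\cdot\vdelta\leq L}\exp(\overline{\vec g}\cdot\valpha)$ by splitting the directions according to the sign of $\overline g_i = \log(\beta_i)(\gamma_i-s_i)/2$. On $\indsetA$ the geometric series in $\alpha_i$ is uniformly summable and contributes only a constant factor $\prod_{i\in\indsetA}(1-\beta_i^{-(s_i-\gamma_i)/2})^{-1}$; on $\indsetB$ the exponents vanish and the sums yield factors polynomial in $L$; on $\indsetC$ the exponential growth forces the sum to concentrate near the outer face of the simplex. Bounding the discrete sum by a suitable integral and using an exponential-integral estimate on the anisotropic simplex (the constant $\mathfrak{C_W}$ appearing in \eqref{eq:CW_caseB}), one obtains the sharp asymptotics $\workEst\sim L^{d_2}$ when $\chi\leq 0$ (Cases A, C) and $\workEst\sim \exp(L\chi)\,L^{\mathfrak{x}-1+(\text{contribution from }\indsetB)}$ when $\chi>0$ (Cases B, D). An entirely parallel argument, now with $\overline\gamma_i>0$ in every direction, gives $\workRem(\mathcal I_\vdelta(L))\sim \exp(L\Gamma)\,L^{\mathfrak{g}-1}$, where the constant $C_R$ in \eqref{eq:work_rem_constant} plays the role that $\mathfrak{C_W}$ played above.

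Next I would insert $L=\eta^{-1}\log(\tol_B^{-1})+(\mathfrak{e}-1)\eta^{-1}\log(\eta^{-1}\log(\tol_B^{-1}))+\eta^{-1}\log C_{\textnormal{Bias}}$ from \eqref{eq:L_td_set_general}. Because $\exp(L\chi)=\tol_B^{-\chi/\eta}(\log\tol_B^{-1})^{(\mathfrak{e}-1)\chi/\eta}(1+o(1))$ and $L=\eta^{-1}\log\tol_B^{-1}(1+o(1))$, the first (statistical) term of the work bound scales as $\tol^{-2-2\max\{0,\chi/\eta\}}$ times a logarithmic factor whose exponent is $2d_2$ in Case A and $2(\mathfrak{x}-1+(\mathfrak{e}-1)\chi/\eta)$ in Case B, yielding the constants $C_A$ and $C_B$ of \eqref{eq:CW_caseA}--\eqref{eq:CW_caseB}. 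The single-sample term $\CWork\workRem$ scales as $\tol^{-\Gamma/\eta}(\log\tol^{-1})^{\mathfrak{g}-1+(\mathfrak{e}-1)\Gamma/\eta}$. Cases A--D are then precisely the four regimes obtained by comparing $\Gamma/\eta$ against $2$ or $2+2\chi/\eta$; strict inequalities decide the dominant term immediately, and at the borderlines $\Gamma=2\eta$ or $\Gamma=2\eta+2\chi$ the tie is broken by comparing the logarithmic exponents, which is exactly what the conditions $2\mathfrak{e}+\mathfrak{g}$ versus $2d_2+3$ or $2\mathfrak{x}+1$ encode; in the borderline cases both terms contribute and are added together, explaining the indicators $\mathscr{I}_C$ and $\mathscr{I}_D$.

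The hard part will be the sharp simplex sum estimate and the careful bookkeeping of the logarithmic exponents and constants. Crude exponential dominance is easy, but the stated bound demands the exact power of $\log\tol^{-1}$ together with the explicit constants $C_A,C_B,C_R$; this requires a uniform asymptotic lemma for sums of the form $\sum_{\valpha\cdot\vdelta\leq L}\exp(\vec b\cdot\valpha)$ with different sign patterns of $\vec b$, which I would abstract into auxiliary constants (the $\mathfrak{C_W}$, $\mathfrak{C_B}$ already used in Lemma~\ref{lem:gmlmc_td_L_gen}) and then apply with $\vec b = \overline{\vec g}$ for $\workEst$ and $\vec b=\overline{\vec\gamma}$ for $\workRem$. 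Once this technical lemma is available, the remainder of the proof is algebraic: match the exponents of $\tol$ and of $\log\tol^{-1}$, and identify the controlling constant in each of the four regimes.
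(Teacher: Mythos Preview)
Your proposal is correct and follows essentially the same route as the paper: split $\workEst$ via the factorization over $\indsetA$ versus $\indsetBC$, bound the $\indsetBC$ part (and $\workRem$) by the simplex exponential integral lemma encoded in $\mathfrak{C_W}$, substitute $L$ from \eqref{eq:L_td_set_general}, and compare the two terms to obtain the four cases. One small correction: when $\chi>0$ the power of $L$ in $\workEst$ is exactly $\mathfrak{x}-1$ with no extra ``contribution from $\indsetB$'' (those directions are already absorbed into the constant $\mathfrak{C_W}$); the additional $(\mathfrak{e}-1)\chi/\eta$ in the logarithmic exponent arises solely from the $(\mathfrak{e}-1)\log\log$ term in $L$ acting through $\exp(L\chi)$, which you correctly identify in the following paragraph.
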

\begin{proof}
  First note that \eqref{eq:bias_asymb} is satisfied due to
  Lemma \ref{lem:gmlmc_td_L_gen}. Now, we need to bound the work in
  \eqref{eq:gmlmc_work_model}. We start with the term
  $\workRem(\mathcal I)$. Using Lemma \ref{lem:work_bound}, we have:
  \[
  \aligned \workRem(\mathcal I_{\vdelta}(L)) &= \sum_{\alpha \in
    \mathcal
    I_{\vdelta}(L)} \exp(\overline{\vec{\gamma}} \cdot \valpha) \\
  &\leq \left( \prod_{i \in I} \delta_i^{-1} \right) \intlim{\vec x \in
    \rsetp^{d}}{|\vec x| \le
    L+1} \exp\left(\sum_{i=1}^d \delta_i^{-1}\overline{\gamma}_i x_i \right)\, \textnormal{d}\vec x  \\
  &\le  \left( \prod_{i \in I} \delta_i^{-1} \right)
  \mathfrak{C_W} \left(\left( \delta_i^{-1} \overline \gamma_i\right)_{i
      \in \indset} \right)  \exp(\Gamma (L+1)) \left( L+1
  \right)^{\mathfrak{g}-1}.
  \endaligned
  \]
  Then, substituting $L$ from \eqref{eq:L_td_set_general} and taking the
limit $\tol\downarrow 0$ yield
  \begin{equation}
    \label{eq:work_rem_rate}
    \lim_{\tol \downarrow 0} \frac{\workRem(\mathcal I_{\vdelta}(L))}{\tol^{\frac{-\Gamma}{\eta}} \left( \log(\tol^{-1})
      \right)^{\mathfrak{m}}} = {C_R^{-1}} ,
  \end{equation}
  where $\mathfrak{m} = \mathfrak{g} - 1
  +(\mathfrak{e}-1)\frac{\Gamma}{\eta}$.

  Next, we focus on the term $\widetilde W(\mathcal I_\vdelta(L))$.
  Define $\tilde \vdelta_1 = \left( \delta_i \right)_{i \in \indsetA}$
 to be the entries of $\vdelta$
  corresponding to the index set, $\indsetA$, introduced in \eqref{eq:ind_sets}. Similarly define $\hat
  \vdelta$ corresponding to $\indsetBC$.
  Then, starting from \eqref{eq:gmlmc_worktilde_model}, we have
  \begin{equation}
    \label{eq:td_w_est_gen}
    \aligned
    \widetilde W(\mathcal I_\vdelta(L)) &= \sum_{\valpha \in \mathcal
      I_\vdelta(L)} \exp(\overline{\vec g} \cdot \valpha) \\
    &\leq  \underbrace{\left(\sum_{\valpha \in \nset^{d_1}, \valpha \cdot \tilde \vdelta_1 \leq
          L} \exp\left( \sum_{i \in \indsetA} \overline g_i \alpha_i \right) \right)}_{\eqdef P_1} \underbrace{\left(
        \sum_{\valpha \in
          \nset^{\hat d}, \valpha \cdot \hat{\vdelta} \leq L}
        \exp\left( \sum_{i \in \indsetBC}\overline g_i \alpha_i
        \right) \right)}_{\eqdef \hat P}.
    \endaligned
  \end{equation}
  Now, observe that for the term $P_1$, since $\overline g_j<0$ for all $j\in \indsetA$,  we have
    \begin{equation}
    \label{eq:p1_est_gen}
    P_1  \le \frac{1}{\prod_{j \in \indsetA} (1-\exp({\overline g_j })) }.
  \end{equation}

  For the term $\hat P$ in \eqref{eq:td_w_est_gen}, we distinguish between two cases:
  \begin{itemize}
  \item
    If $\chi$ from \eqref{eq:max_rates} satisfies $\chi \leq 0$, then
    $\max_i \overline g_i \leq 0$, $\indsetC = \emptyset$ and $\indsetBC = \indsetB$. Thus,
    since $\overline g_j=0$ for all $j\in \indsetB$, we have
    \[
    \aligned
    \hat P = &
    \sumlim{\valpha \in \nset^{d_2}}{\tilde\vdelta_2 \cdot\valpha \le L}   1 \\
    \le &
    \intlim{\vec x \in \rsetp^{d_2}}{\vec x\cdot\tilde\vdelta_2 \le
      L+|\tilde\vdelta_2|} 1\, \textnormal{d}\vec x  \\
    \le &
    \frac{1}{\prod_{j\in \indsetB} \delta_{j}}  \intlim{\vec y \in \rsetp^{d_2}}{|\vec y| \le L+|\tilde\vdelta_2|} 1\, \textnormal{d}\vec y \\
    \leq & \frac{1}{\prod_{j\in \indsetB} \delta_{j}}  \frac{(L+1)^{d_2}}{d_2!}.
    \endaligned
    \]
    Combining the previous inequality with
    \eqref{eq:td_w_est_gen}, \eqref{eq:p1_est_gen},
    and \eqref{eq:L_td_set_general} and
    taking the limit of the resulting expression as $\tol \downarrow 0$ yield
    \begin{equation}
      \label{eq:caseI_worktilde_limit}
      \limsup_{\tol \downarrow 0}
      \frac{\left( \workEst(\mathcal I_\vdelta) \right)^2}{\left(
          \log\left(\tol^{-1} \right) \right)^{2d_2}} \leq
      { C_A^{-1}}.
    \end{equation}
              \item
    If $\chi > 0$, then using the identity of Lemma
    \ref{lem:work_bound} yields
    \begin{align*}
      \hat P &\leq \intlim{\vec x \in \rsetp^{\hat d} }{\vec x \cdot
        \hat \vdelta \leq L + |\hat \vdelta|} \exp\left(
      \sum_{i \in \indsetBC} \overline g_i x_i \right) \textnormal{d}\vec x \\
      &\leq \left(\prod_{i \in \indsetBC} \delta_i^{-1} \right)
      {\mathfrak{C_W} \left( \left( \delta_i^{-1} \overline g_i \right)_{i \in
            \indsetBC} \right)} \exp\left(\chi L\right) \left( L+1
      \right)^{\mathfrak{x}-1}.
    \end{align*}
    Combining the previous inequality with
    \eqref{eq:td_w_est_gen}, \eqref{eq:p1_est_gen},
    \eqref{eq:gmlmc_worktilde_model} and \eqref{eq:L_td_set_general} and
    taking the limit of the resulting expression as $\tol \downarrow 0$ yield
    \begin{equation}
      \label{eq:caseII_worktilde_limit}
      \limsup_{\tol \downarrow 0}
      \frac{\left( \workEst(\mathcal I_\vdelta) \right)^2}{\tol^{-\frac{2\chi}{\eta}} \left( \log\left(
            \tol^{-1} \right)\right)^\mathfrak{j}
      } \leq
      {C_B^{-1}}.
    \end{equation}
    where $\mathfrak j = 2 \left(
      \mathfrak{x}-1+{\frac{(\mathfrak{e}-1) \chi}{\eta}}
    \right)$.
              \end{itemize}

  Now we are ready to prove the different cases. In this lemma,
  \textbf{Cases A} and \textbf{B} are the cases when the first term in
  \eqref{eq:gmlmc_work_model} dominates the second and the proof
  follows by substituting \eqref{eq:caseI_worktilde_limit} or
  \eqref{eq:caseII_worktilde_limit} in the right-hand side
  \eqref{eq:work_td_limit_caseA}.  On the
  other hand, in \textbf{Cases C} and \textbf{D}, the second term in
  \eqref{eq:gmlmc_work_model} either dominates the first or has
  the same order. In these cases, the proof is done by substituting
  \eqref{eq:work_rem_rate} in the right-hand side
  \eqref{eq:work_td_limit_caseA}.
                              \end{proof}

We are now ready to state and prove our main result, which is a special
case of the previous lemma when we make the specific choice of
$\vdelta$ as in \eqref{eq:optimal_weights}.

\begin{theorem}[Work estimate with optimal weights]
  \label{thm:gmlmc_td_opt}
  Let the approximation index set be $\mathcal{I_\vdelta}(L) = \{
  \valpha \in \nset^d : \vdelta \cdot \valpha \leq L\}$ for $\vdelta
  \in (0,1]^d$ given by~\eqref{eq:optimal_weights} and take $L$ as
  \eqref{eq:L_td_set_general}.
     Under \textbf{Assumptions 1-3}, the bias inequality
  \eqref{eq:bias_asymb} is satisfied and the total work, $W(\mathcal
  I_\vdelta)$, of the MIMC estimator, $\mathcal{A}$, subject to
  constraint \eqref{eq:stat_const} satisfies the following
  \begin{equation*}
    \limsup_{\tol \downarrow 0}
    \frac{W(\mathcal I_\vdelta)}{\tol^{-2\left(1 + \max\left(0, \zeta
                      \right)\right)} \left(
        \log\left(\tol^{-1} \right) \right)^{\mathfrak{p}}} \leq
    \CWork \mathcal{C} < \infty,
  \end{equation*}
  where
\begin{itemize}
\item[\textbf{Case A)}]
$\displaystyle\aligned[t]
                \text{If }&\text{either} \qquad &
        \zeta \leq 0& \quad \text{ and } \quad \zeta < \xi, \\
                &\text{or} & \zeta = \xi = 0 &\quad \text{ and } \quad d \leq
        2,
        \endaligned$
  \\
  then $\mathfrak{p} = 2 d_2$ and
$ \mathcal C = {Q_S C_\epsilon^2}{\theta^{-2} C_A^{-2}}$
        where ${C_A}$ is defined in \eqref{eq:CW_caseA}.
      \item[\textbf{Case B)}] if $\zeta > 0$ and $\xi > 0$, then $\mathfrak{p}
        = 2(\mathfrak{z} - 1) (\zeta + 1)$ and $ \mathcal C =
        {Q_S C_\epsilon^2 }{\theta^{-2} C_B^{-2}}, $ where ${C_B}$ is
        defined in \eqref{eq:CW_caseB}.
      \item[\textbf{Case C)}] If $\zeta = \xi = 0$ and
        $d > 2$ then
 $\mathfrak{p} = 2d_2 + d - 3$ and we have
$
{ \mathcal C =  \mathscr{I}_C {Q_S C_\epsilon^2}{\theta^{-2}
   C_A^{-2}} + {C_R^{-1}},}
 $
       where $C_R$ is defined in \eqref{eq:work_rem_constant} and
       $\mathscr{I}_C$ is defined in \eqref{eq:ind_caseC} and
       simplifies to:
       \[ \mathscr{I}_C = \begin{cases}
         1 & \text{if } {d = 3}, \\
         0 & \text{if } {d > 3}.
\end{cases}
\]
\item[\textbf{Case D)}] if $\zeta > 0$ and $\xi = 0$, then $\mathfrak{p} = d-1
  + 2(\mathfrak{z}-1)(1+\zeta)$ and $\mathcal C = \mathscr{I}_D
    {Q_S C_\epsilon^2}{\theta^{-2} C_B^{-2}} + C_R^{-1}$ where
  $\mathscr{I}_D$ is defined in \eqref{eq:ind_caseD} and simplifies
  to:
\[ \mathscr{I}_D = \begin{cases}
  1 & \text{if } {d = 1}, \\
  0 & \text{if } {d > 1}.
\end{cases}
\]
\end{itemize}
\end{theorem}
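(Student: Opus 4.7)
The plan is to apply Lemma \ref{lem:gmlmc_td_work_gen} directly with the weights $\vdelta$ given by \eqref{eq:optimal_weights}: the Theorem is essentially an algebraic translation in which the abstract exponents $\eta$, $\Gamma$, $\chi$ and multiplicities $\mathfrak{e}$, $\mathfrak{g}$, $\mathfrak{x}$ of \eqref{eq:max_rates} collapse into the cleaner quantities $\zeta$, $\xi$, $\mathfrak{z}$, and each of the four cases of the Theorem then needs to be matched to one or more cases of the Lemma.

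First I would substitute $\delta_i = \log(\beta_i)(w_i + (\gamma_i-s_i)/2)/C_\vdelta$ into the three ratios defining $\eta$, $\chi$ and $\Gamma$. Setting $a_i = (\gamma_i-s_i)/(2w_i)$ and $b_i = (2w_i-s_i)/\gamma_i$, routine algebra yields $\log(\beta_i)w_i/\delta_i = C_\vdelta/(1+a_i)$, $\log(\beta_i)(\gamma_i-s_i)/(2\delta_i) = C_\vdelta a_i/(1+a_i)$, and $\log(\beta_i)\gamma_i/\delta_i = 2C_\vdelta/(1+b_i)$. Since $x\mapsto x/(1+x)$ and $x\mapsto 1/(1+x)$ are monotone on $x>-1$, the relevant extrema over $i$ are attained at the same indices as those of $a_i$ and $b_i$; hence $\mathfrak{e} = \mathfrak{x} = \mathfrak{z}$, $\eta = C_\vdelta/(1+\zeta)$, $\Gamma = 2C_\vdelta/(1+\xi)$ and $\chi/\eta = \zeta$ in every case. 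A short computation then gives $(\Gamma-2\eta)/(2\eta) = (\zeta-\xi)/(1+\xi)$, so $\Gamma = 2\eta$ iff $\zeta = \xi$, $\Gamma = 2\eta+2\chi$ iff $\xi = 0$, and using $\xi\ge 0$ (which follows from Assumption 2, $s_i\le 2w_i$) one finds $\max\{0,\chi/\eta,(\Gamma-2\eta)/(2\eta)\} = \max\{0,\zeta\}$. This already produces the leading $\tol^{-2(1+\max(0,\zeta))}$ factor stated in the Theorem.

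With these identifications in hand the case analysis is direct. Case A of the Theorem maps to Case A(i) of the Lemma when $\zeta\le 0$ and $\zeta<\xi$; in the boundary subcase $\zeta=\xi=0$ with $d\le 2$, the auxiliary hypothesis $2\mathfrak{e}+\mathfrak{g}<2d_2+3$ of Case A(ii) is automatic because $\mathfrak{e}=d_2$ (the indices with $a_i=0$ are exactly those in $\indsetB$) and $\mathfrak{g}\le d\le 2$. Case B ($\zeta>0$, $\xi>0$) puts $\chi>0$ and $\Gamma<2\eta+2\chi$, so Case B(i) applies and its exponent $2(\mathfrak{x}-1+(\mathfrak{e}-1)\chi/\eta)$ collapses to $2(\mathfrak{z}-1)(1+\zeta)$. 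Cases C and D of the Theorem (both with $\xi=0$) correspond to Cases A/C or B/D of the Lemma depending on the actual value of $\mathfrak{g}$; to avoid splitting on $\mathfrak{g}$ I would use the monotone inequality $\mathfrak{g}\le d$ to absorb all sub-configurations into a single looser bound written in terms of $d$, while setting $\mathscr{I}_C$ and $\mathscr{I}_D$ to $1$ only at the critical dimensions $d=3$ and $d=1$ where the Monte Carlo statistical term and the single-sample term of \eqref{eq:gmlmc_work_model} genuinely share leading order. The main obstacle is precisely this bookkeeping in Cases C and D: one must check that replacing $\mathfrak{g}$ by $d$ in the $\log$ exponent $\mathfrak{p}$ always inflates (and therefore preserves) the upper bound, and that $\mathscr{I}_C$, $\mathscr{I}_D$ are set to zero outside the critical dimensions so that the asymptotically dominated term drops out of the leading constant.
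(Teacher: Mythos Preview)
Your proposal is correct and follows essentially the same route as the paper: substitute the optimal weights into the ratios defining $\eta,\chi,\Gamma$, obtain $\mathfrak e=\mathfrak x=\mathfrak z$, $\chi/\eta=\zeta$, $\Gamma/(2\eta)=(1+\zeta)/(1+\xi)$, and then read off each case of Lemma~\ref{lem:gmlmc_td_work_gen}. One minor point where you are arguably more careful than the paper: the paper simply asserts $\mathfrak g=d$ whenever $\xi=0$, whereas you correctly note that only $\mathfrak g\le d$ is guaranteed (equality requires $s_i=2w_i$ for \emph{every} $i$), and you plan to absorb the discrepancy by inflating the logarithmic exponent; since the theorem is stated as a $\limsup$ upper bound, either treatment yields the stated result.
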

\begin{proof}
                First, recall that, due to \eqref{eq:optimal_weights},
  $ \delta_j = \frac{\overline w_j+\overline g_j}{C_\vdelta} $. Then,
  using \eqref{eq:max_rates}, we have
  \[
  \aligned
  \mathfrak{e} &= \#\{i \in \indset: \delta_i^{-1} \overline w_i =
  \min_{j \in \indset} \delta_j^{-1} \overline w_j
  \} \\
   &= \#\{i \in \indset: \frac{\overline w_i}{\overline w_i +
     \overline g_i} =
 \min_{j \in \indset}  \frac{\overline w_j}{\overline w_j + \overline g_j}
 \} \\
    &= \#\{i \in \indset: 1 + \frac{\overline g_i}{\overline w_i} =
    1 + \max_{j \in \indset} \frac{\overline g_j}{\overline w_j}
  \} = \mathfrak{z}
  \endaligned
  \]
  \[
  \aligned
  \mathfrak{x} &= \#\{i \in \indset: \delta_i^{-1} \overline g_i =
  \max_{j \in \indset} \delta_j^{-1} \overline g_j
  \} \\
   &= \#\{i \in \indset: \frac{\overline g_i}{\overline w_i +
     \overline g_i} =
 \max_{j \in \indset}  \frac{\overline g_j}{\overline w_j + \overline g_j}
 \} \\
    &= \#\{i \in \indset: 1 + \frac{\overline w_i}{\overline g_i} =
    1 + \min_{j \in \indset} \frac{\overline w_j}{\overline g_j}
  \} = \mathfrak{z}.
  \endaligned
  \]
  Similarly, we can show that $\mathfrak{g}=d$ when $\xi=0$.
Next, observe that, on one hand, by setting $\sigma_j=\overline g_j / \overline w_j$, we have
\begin{equation}
\aligned
\frac{1}{\eta} =
\max_{j \in \indset} \frac{\delta_j}{\overline w_j} =&
\frac{1}{C_\vdelta} \max_{ j \in \indset}\left(1+\frac{\overline g_j}{\overline w_j}\right)  =&
\frac{1}{C_\vdelta}\left(1+\max_{ j \in \indset} \sigma_j\right)
\endaligned\label{eq:optimal_1_eta}
\end{equation}
and, on the other hand, we have
\begin{equation}
\chi = \max_{j \in \indset} \frac{\overline g_j}{\delta_j} =
C_\vdelta \max_{j \in \indset} \frac{\sigma_j}{1+\sigma_j} =
C_\vdelta \frac{\max_{j \in \indset} \sigma_j}{1+\max_{j \in \indset}
  \sigma_j},
\label{eq:optimal_chi}
\end{equation}
since $f(x)=x/(1+x)$ is a monotone increasing function.
Thus, from \eqref{eq:optimal_1_eta} and \eqref{eq:optimal_chi}, we conclude that
\[ \frac{\chi}{\eta} = \max_{i \in \indset} \sigma_i = \zeta.\]
Hence, $\chi \geq 0$ if an only if $ \zeta \geq 0$.
Moreover, using
a similar calculation, we easily see that
\[\Gamma = \frac{2 C_\vdelta}{1+\xi} \quad\text{and}\quad
\eta = \frac{C_\vdelta}{1+\zeta}
\quad\text{so that}\quad
\frac{\Gamma}{2\eta} =
\frac{1+\zeta}{1+\xi}.\]
 Also, if $\zeta \leq 0$, then $ \zeta \leq 0 \leq \xi$
 since, for all $i \in \indset$, we have $s_i \leq 2 w_i$ by
\textbf{Assumptions 1-2}.
 On the other hand, if $\zeta > 0$, then
 ${\zeta-\xi} \leq \zeta({1 + \xi})$.
 In any case, for all $\zeta$, we have
 \begin{equation}
   \frac{\Gamma - 2\eta}{2\eta} = \frac{\zeta-\xi}{1 + \xi} \leq
 \max(0,\zeta).\label{eq:4}
\end{equation}
Substituting \eqref{eq:4}, $\mathfrak{z} =
\mathfrak{e} = \mathfrak{x}$ and $\zeta = {\chi}/{\eta}$ in Lemma
\ref{lem:gmlmc_td_work_gen} and
noting that $d_2 = \mathfrak{z}$ if $\zeta = 0$ and  $\mathfrak{g} = d$ if $\xi = 0$ yield the stated results in
this theorem. In particular:
\begin{itemize}
\item[\textbf{Case A)}] $\chi \leq 0$ and $\Gamma < 2\eta \Rightarrow
  \zeta \leq 0 $ and $\zeta < \xi$. On the other hand, $\chi \leq 0$ and
  $\Gamma = 2\eta \Rightarrow \zeta \leq 0$ and $\zeta = \xi \Rightarrow
  \zeta = \xi = 0$ since $\xi \geq 0$. Moreover, in the latter case, $\mathfrak{e} =
  \mathfrak{z} = d_2$ and $\mathfrak{g} = d$. Therefore, $2 \mathfrak{e} +
  \mathfrak{g} < 2 d_2 + 3 \Rightarrow d \leq 2$.
 \item[\textbf{Case B)}] $\chi > 0$ and $\Gamma < 2 \eta + 2 \chi \Rightarrow
   \zeta > 0$ and $\xi > 0$. On the other hand, $\chi > 0$ and $\Gamma
   = 2\eta + 2\chi \Rightarrow \zeta > 0$  and $\xi = 0$. Moreover, in
   the latter case, since $\mathfrak{e} = \mathfrak{x} = \mathfrak{z}$
   and $\mathfrak{g} = d$, then $2 \mathfrak{e} + \mathfrak{g} < 2
   \mathfrak{x} + 1 \Rightarrow d < 1$, which is always false since $d
   \geq 1$.
\end{itemize}
Other cases can be proved similarly.
\end{proof}

\begin{remark}[On isotropic directions]\label{rem:iso_work}
  Of particular interest is the case when ${\gamma_i = \gamma}$, ${s_i=s}$,
  ${w_i = w}$ and ${\beta_i = \beta}$ for all $i \in \indset$ and for
  positive constants $\gamma, s, w$ and $\beta$.  In this case, we
  have asymptotically as $\tol \to 0$,
  \begin{subequations}
    \begin{align} \label{eq:gmlmc_iso_ft_work}
      \begin{array}{c}
        \text{Work of MIMC with}\\
        \text{a full tensor}\\
        \text{index set}
      \end{array} = \begin{cases}
        \Order{\tol^{-2}}, & s > \gamma, \\
        \Order{\tol^{-2} \left(\log(\tol^{-1})\right)^{2d}}, & s = \gamma, \\
        \Order{\tol^{-\left(2 + \frac{d(\gamma-s)}{w}\right) }},  & s < \gamma,
      \end{cases}\\
\inchangesA{
      \begin{array}{c}
        \text{Work of MIMC}\\
        \text{with an optimal} \\
        \text{TD index set}
      \end{array} = \begin{cases}
        \Order{\tol^{-2}}, & s > \gamma, \\
        \Order{\tol^{-2} \left(\log(\tol^{-1})\right)^{2d}}, & s = \gamma, \\
        \Order{\tol^{-\left(2 + \frac{\gamma-s}{w}\right) } \left(\log(\tol^{-1})\right)^{\mathfrak{p}}},  & s < \gamma,
      \end{cases}
}    \end{align}
    where $\mathfrak{p}$ can be found in Theorem
    \ref{thm:gmlmc_td_opt}.
On the other hand, we have \cite{cst13}
    \begin{align} \label{eq:mlmc_iso_work} \text{Work of MLMC}
      = \begin{cases}
        \Order{\tol^{-2}}, & s > d\gamma, \\
        \Order{\tol^{-2} \left(\log(\tol^{-1})\right)^2}, & s = d\gamma, \\
        \Order{\tol^{-\left(2 +\frac{d\gamma-s}{w}\right)}},  & s < d\gamma. \\
      \end{cases}
    \end{align}
  \end{subequations}
  We notice that the conditions in \eqref{eq:gmlmc_iso_ft_work} for
  the optimal convergence rate $\Order{\tol^{-2}}$ do not depend on
  the number of directions in the underlying problem. Moreover, for
  the case where the variance convergence is slower than the work
  increase rate, the work complexity of the MIMC estimator with both
  types of index sets is better than the work complexity of MLMC
  whenever we work with multi-directional problems, i.e., $d>1$.

    It should be noted, however, that the MIMC results require {\em
    mixed} regularity (in the sense of \textbf{Assumptions 1,2}) of
  a certain order. On the other hand, the MLMC results require only {\em
    ordinary} regularity of the same order. {Moreover, MIMC
    based on full tensor index sets requires condition
    \eqref{eq:ft_assump_dominant} to be satisfied. In this isotropic
    case, this condition simplifies to the following inequality:
    $2w > d \min(s,\gamma)$. This is, in some cases, more restrictive
    than the similar condition of MLMC, which reads in such a case as
    $2w \geq \min(s,d\gamma)$, cf. \cite[Theorem 2.3]{tsgu13}}. On the
  other hand, MIMC with optimal TD index sets has only the much less
  restrictive, dimension-independent condition $2 w \geq s$.
  Moreover, using TD index sets, the rate of work
  complexity is, up to a logarithmic factor, independent of $d$. In
  other words, up to a logarithmic term, the rate of the computational
  complexity of MIMC with an optimal TD index set is equivalent to the
  computational complexity of MLMC when used with a single
  direction.

          \end{remark}

\begin{remark}[Lower mixed regularity]
  In some cases, we might have enough mixed regularity in the sense of
  \textbf{Assumptions 1-2}
 along some directions but not along others. For example, assume that, out of $d$
  directions, the first $\tilde d$ directions do not have mixed regularity among
  each other.
  Our MIMC estimator can still be applied by considering all first $\tilde d$
  directions as a single direction. This is done by using  the same discretization
  parameter, $\tilde \alpha$, for all $\tilde d$ directions and then finding the new
  rates, $\tilde \gamma$, $\tilde s$ and $\tilde w$, of the resulting
  direction.
  This can be thought of as combining MLMC in the first $\tilde d$ directions
  with MIMC in the rest of the directions and in the case $d=\tilde d$, i.e., the problem
  has no mixed regularity and MIMC reduces to standard MLMC.
  All results derived in the current work can still be applied to
  this new setting, which conceptually corresponds to  $d-\tilde d+1$ directions in the MIMC
  results presented here.
  In particular, if we assume that the first $\tilde d$ directions
  are isotropic with the same variance convergence rate, $s$, and work rate, $\gamma$,
  then the results in Theorems~\ref{thm:gmlmc_ft} and \ref{thm:gmlmc_td_opt} deteriorate in the
  sense that the conditions relating $s$ and $\gamma$ in
  \eqref{eq:gmlmc_asym_cases} for the grouped direction
  are replaced by the
  more stringent conditions relating $s$ and $\tilde d\gamma$.
\end{remark}

\begin{remark}[A unique worst direction]
  In Theorem \ref{thm:gmlmc_td_opt}, consider the special case when
  ${\mathfrak{z}=1}$, i.e., when the directions are dominated by a
  single ``worst'' direction with the maximum difference between the
  work rate and the rate of variance convergence. In this case, the
  value of $L$ becomes
  \[L = \frac{1}{\eta }\left( \log(\tol_B^{-1}) +
    \log({C_{\textnormal{Bias}}}) \right)\]
  and MIMC with a TD index set in \textbf{Case B} achieves a better
  rate for the computational complexity, namely
  $\Order{\tol^{2-2\zeta}}$. In other words, the logarithmic term
  disappears in the computational complexity and, in this case, the
  computational complexity of MIMC with an optimal TD index set is, up
  to a constant, the same computational complexity as a MLMC along the
  single worst direction.

  The same results also hold when the variance convergence is faster
  than algebraic in all but one direction and, in this case, the
  overall complexity is dictated by the only direction with an
  algebraic convergence rate. In this case, the optimal index set
  might no longer be of TD-type but it can still be constructed by
  using the same methodology and profit definition presented in
  Section~\ref{ss:td_set}.
\end{remark}

\begin{remark}[Optimal weights and case of smooth noise]
  As Lemma~\ref{lem:gmlmc_td_work_gen} shows, even if the rates $s_i$
  and $w_i$ are not known for all $i \in \indset$ and if we choose
  arbitrary weights, $\vdelta$, to build the TD index set, we still
  obtain a work complexity whose rate is independent of the number of directions,
  $d$, up to a logarithmic term. The complexity is determined by the
  direction with the slowest weak convergence and the direction with
  the largest difference between the rate of variance convergence and
  the rate of work per sample. Moreover, recall the definition of the
  optimal $\vdelta$ in \eqref{eq:optimal_weights} and note that when
  $\xi=0$, $s_i = 2 w_i$ for all $i \in \indset$. In this case,
  $\delta_i = \frac{\log(\beta_i) \gamma_i}{2 C_\vdelta}$ and the
  optimal index set is completely determined by the rates in the work per
  sample along each direction.
\end{remark}
\inchangesA{
\begin{remark}[Rate of memory usage of MIMC]
\label{rem:memory}
Assume that the memory usage to calculate a sample of $\estS_\alpha$
is $\Order{\exp(\tau |\alpha|)}$ for some $\tau >0$.  In MIMC, when
using TD-type index sets in isotropic problems, we have
$|\alpha| \leq L$ and as such the maximum memory usage of MIMC in this
case is $\Order{\exp(\tau L)}$ or
$\Order{\tol^{\frac{-\tau}{w}} (\log\left(\tol^{-1}\right))^{\tau
    (d-1)}}$
where $w = w_i$ for all $i=1,\ldots,d$. Notice that the rate with
respect to $\tol$ is, up to a logarithmic term, dimension
independent. Compare this to MLMC, where the memory usage is
$\Order{\exp(d \tau \ell)}$ for $\ell \leq L$.  The maximum memory
usage of MLMC is hence $\Order{\exp(d \tau L)}$ or
$\Order{\tol^{\frac{-d\tau}{w}}}$. Refer to
Figure~\ref{fig:dof_vs_tol} for an illustration of this point.
\end{remark}
}

\makeatletter{}\section{Numerical Example}\label{s:res}
This section presents a numerical example illustrating the behavior of
the MIMC, which is in agreement with our theoretical analysis. For the
sake of comparison, we show the results of applying three different
approximations to the same problem: MLMC as outlined in \cite{cst13},
MIMC with a full tensor index set as outlined in
Section~\ref{ss:full_tensor}, and MIMC with
a total degree index set as outlined in Section~\ref{ss:td_set}. We
begin by describing the numerical example. Then, we present the solvers
and algorithms and finish by giving the numerical results.

\subsection{Example overview} \label{sec:prob-overview}
The numerical example is adapted from \cite{haji_opt} and is based on Example~\ref{ex:spde_problem} in Section~\ref{sec:hier_intro}
with some particular choices that satisfy the assumptions therein and \textbf{Assumptions 1-3}.
 First, the domain is chosen to be ${\mathcal{D} = [0,1]^3}$ and the
\inchangesA{ forcing is $f(\vec x;\omega) = 1$.
Moreover, the diffusion coefficient is chosen to be a function of two random variables as follows:
\begin{equation*}
    a(\vec x; \omega) = 1 + \exp \Big(2 Y_1 \Phi_{121}(\vec x) + 2 Y_2 \Phi_{877}(\vec x)\Big).
\end{equation*}
Here, $Y_1$ and $Y_2$ are i.i.d. uniform
random variables in the range $[-1,1]$. We also take
\begin{align*}
  \Phi_{ijk}(\vec x) &= \phi_i(x_1)\phi_j(x_2)\phi_k(x_3),\\
   \text{and}\qquad  \phi_i(x) &=
\begin{cases}
    \cos\left( \frac{i}{2} \pi x \right) & i \text{ is even}, \\
    \sin\left( \frac{i+1}{2} \pi x \right) & i \text{ is odd},
\end{cases}
\end{align*}
Finally, the quantity of interest, $S$, is
\[S = 100 \left( 2 \pi \sigma^2 \right)^\frac{-3}{2} \int_\mathcal{D}
\exp \left( - \frac{ \| \vec x - \vec x_0 \|^2_2}{2 \sigma^2} \right)
u(\vec x) d\vec x, \]
and the selected parameters are $\sigma=0.16$ and
${\vec x_0 = \left[0.5,0.2,0.6\right]}$.  A reference solution can be
calculated to sufficient accuracy by using stochastic collocation
\cite{bnt2010} with a sufficiently accurate quadrature to produce the
reference value, $\E{S}$.  Using this method, the reference value
$1.3301$ is computed with an error estimate of $10^{-4}$.
}
\subsection{Solvers and Algorithms}
\subsubsection{Solving the underlying PDE problems}
To solve the underlying PDE problems, uniform meshes with a standard trilinear {finite element} basis are used to discretize
the weak form of the model problem. The number of elements in each dimension is a positive integer, $N_i,$ to
give a mesh size of $h_i = N_i^{-1}$ for all $i=1,2,3$.
Moreover, we use the same $\beta=2$ in all dimensions. In other words, given
 a multi-index $\valpha$, we use $N_i = 4 \cdot 2^{\alpha_i}$ in each
 dimension and the resulting problem is isotropic with $w_i = 2$ and
 $s_i=4$ for all $i=1,2,3$ (the same case as
 Remark~\ref{rem:iso_work}).
The linear solver MUMPS ~\cite{Amestoy2001,Amestoy2006} was used for solving the linear problem. For the mesh sizes of interest,
the running time of MUMPS varies from quadratic to linear in the total number of degrees of freedom (cf. Figure \ref{fig:time_vs_dof}).
As such, $\gamma_i$ in \eqref{eq:wl_model} is the same for all $i=1,2,3$ and ranges from $1$ to $2$.

\begin{figure}
  \centering
  \includegraphics[scale=0.6]{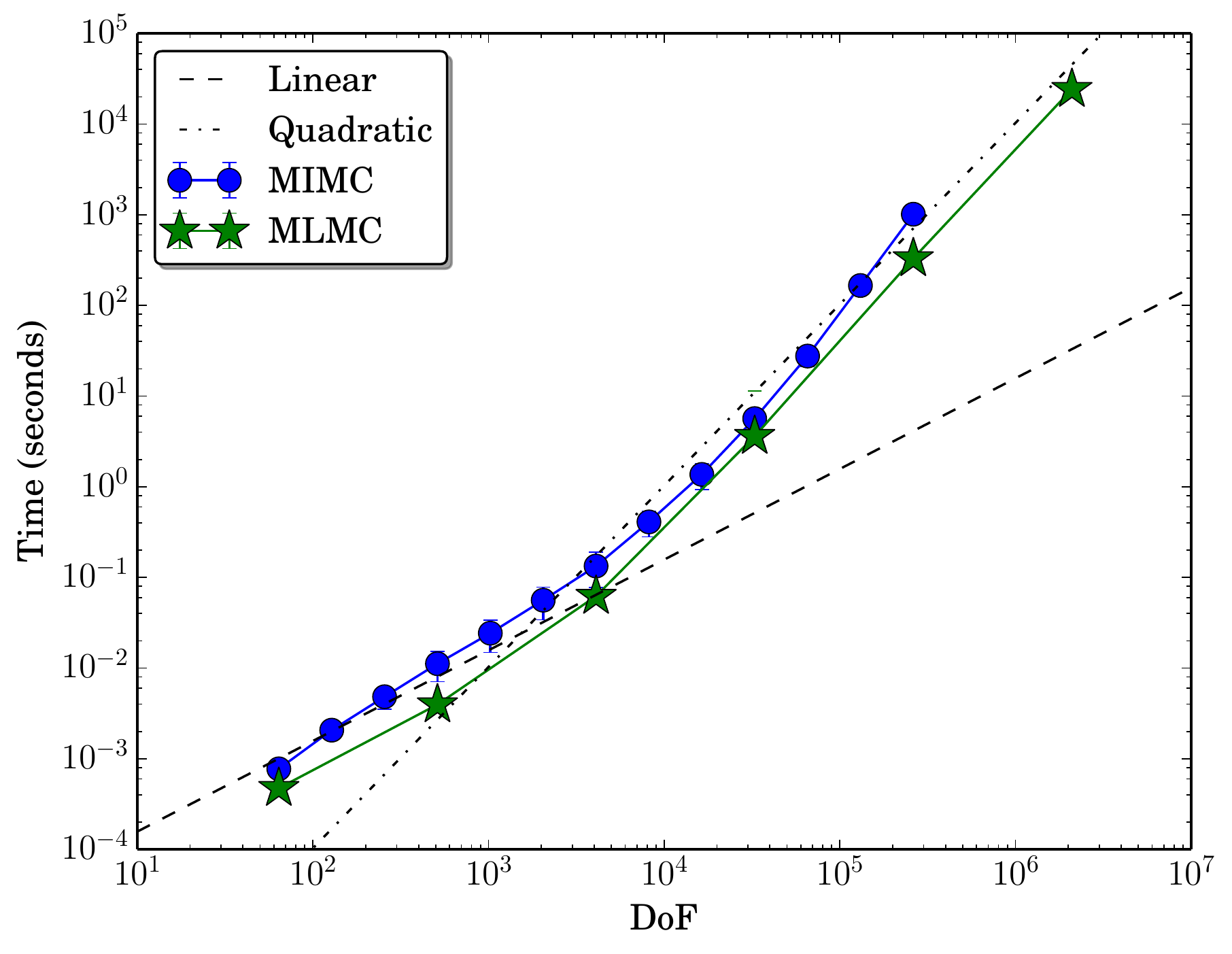}
  \caption{Average running time to estimate the difference operators
    for MIMC and MLMC versus the maximum number of degrees of freedom
    in those levels. \inchangesA{Notice that MIMC has a higher cost
    than MLMC has for the same maximum number of degrees of freedom
    (DoF). This is because $2^d = 8$ terms are estimated per
    difference level in MIMC compared with $2$ terms for difference
    levels in MLMC.  Moreover, because all dimensions are isotropic in
    our numerical example, this shows that $\gamma_i$ in
    \eqref{eq:wl_model} is the same for $i=1\ldots d$ and ranges from
    1 to 2.}}
  \label{fig:time_vs_dof}
\end{figure}

\subsubsection{MIMC Algorithm}
{The algorithm used to generate the results presented in the next section is a slight modification and extension of the MLMC algorithm first outlined in \cite{giles08}.}
Specifically, the sample variance was used to calculate the required
number of samples on each level in MIMC, with a minimum of $\overline{M_0}=5$ samples per level.
Moreover, we used fixed tolerance-splitting, $\theta=0.5$.
Note that this choice might be sub-optimal for MIMC and further work
needs to be done in this case.
The MIMC pseudo-algorithm can be summarized as follows
\begin{enumerate}[label=Step \arabic*.]
\item Set $k=1$
\item Ensure that at least $\overline{M_0}$ samples are calculated for all
  $\valpha \in \mathcal I_k$.
\item Using sample variances as estimates for $V_\valpha$, calculate
  $M_\valpha$ according to \eqref{eq:optimal_M} for all
  $\valpha \in \mathcal I_k$.
\item Calculate extra samples to have at least $M_\valpha$ samples for
  each ${\valpha \in \mathcal I_k}$.
\item Estimate the bias. We expand on this step below.
\item Stop if the bias estimate is less than $(1-\theta) \tol.$
\item Otherwise, increase $k$ and go to Step 2.
\end{enumerate}
This algorithm assumes that $\mathcal I_k \subset \mathcal I_{k-1}$.
For isotropic TD index sets, we simply
use \eqref{eq:idxset_anisoTD} with ${L=k/d}$.
For full tensor index sets, we increase the value of each $L_i$ for
$i=1,2,\ldots,d$ one at
a time cyclically.

\inchangesA{
In Step 6, we use the following bias estimate:
\begin{equation}
\label{eq:bias_comp_bound}
\text{Bias}(\mathcal I_k) \approx \left|\sum_{\alpha \in \partial
    \mathcal I_k}
\E{\estS_\alpha} \right|,
\end{equation}
where $\partial \mathcal I_k$ is the outer boundary of the index set,
$\mathcal I_k$.  Obviously, the error indicator
\eqref{eq:bias_comp_bound} does not provide an error bound in general
unless further assumptions on the integrand function $S$ are
made. Nevertheless, we use this error indicator in our problem
heuristically. We further approximate the expectations in
\eqref{eq:bias_comp_bound} by sample averages using the
$\max(\overline{M_0}, M_\alpha)$ samples that are available for every
$\valpha \in \partial \mathcal I_k$.

Finally, we apply the continuation concept from \cite{haji_CMLMC} by
running MIMC (and MLMC) with a sequence of larger tolerances than
$\tol$ to obtain increasingly accurate estimates of the sample
variances.
}

\subsection{Results}
Three methods were tested: MLMC as outlined in \cite{cst13}, MIMC with
full tensor index sets (referred to as ``FT'' in the figures), and
MIMC with isotropic total degree index sets (referred to as ``TD'' in
the figures). In this isotropic example, the total degree index sets
defined in Section \ref{ss:td_set} become
\[ \mathcal{I}(L) = \{\valpha\in \nset^3 : |\valpha| \leq 3L\}.\]
Recall that in this example, $d=3, s_i=s=4, w_i=w=2$ and
$\gamma_i = \gamma$ for all $i=1,2,3$, where $\gamma$ ranges from 1 to
2.  As such, the condition for MLMC, $2 w \geq \min(s, d\gamma)$, is
satisfied for $\gamma \in [1,2]$.  Similarly the condition for MIMC
with the optimal TD index set, $2w \geq s$, is satisfied. On
the other hand, the condition for MIMC with a full tensor index set,
$2w > d\min(s,\gamma)$, is {\em not} satisfied for
$\gamma \in \left[\frac{4}{3},2\right]$.
According to Remark~\ref{rem:iso_work}, for small enough tolerances
where $\gamma=2$ mostly, we expect the work complexity of MIMC with a TD
index set to be $\Order{\tol^{-2}}$. On the other hand, MIMC with a full
tensor index set would have a work complexity of
$\Order{\tol^{-\frac{d \gamma}{w}}} = \Order{\tol^{-3}}$.  Similarly, MLMC would have
a work complexity of $\Order{\tol^{-2 - \frac{d\gamma-s}{w}}} =
\Order{\tol^{-3}}$.

Figures \ref{fig:mimc_contours} and \ref{fig:mimc_pl_contours} show
that \textbf{Assumptions 1-3} are indeed satisfied (at least for
sufficiently fine discretizations). On the other hand,
Figures~\ref{fig:mlmc_el_vl} and \ref{fig:mimc_el_vl} show numerical
results that are in agreement with the convergence rates claimed
above.  Specifically, these figures show results consistent with the
values $s=4$ and $w=2$. Moreover, Figure~\ref{fig:qqplot} shows
numerical evidence of the normality of the statistical error of the
MIMC estimator. Figure \ref{fig:time_vs_tol} shows the running time
for different tolerances. The MIMC method with total degree index sets
seems to exhibit the expected rate of $\tol^{-2}$ in the computational
time. On the other hand, MLMC and MIMC with a full tensor index set
seem to exhibit a rate closer to $\tol^{-3}$, especially for smaller
tolerances.  The staircase-like effect in running time of MLMC and
MIMC with a full tensor index set is due to the discrete increments of
the maximum number of degrees of freedom per level (cf.
Figure~\ref{fig:dof_vs_tol}). Since a fixed tolerance-splitting
parameter, $\theta=0.5$, was used, the statistical constraint is not
relaxed when the bias becomes smaller and the algorithm ends up
solving for a slightly smaller tolerance than the required $\tol$
(cf. Figure~\ref{fig:error_vs_tol}). Notice that although the fixed
tolerance splitting parameter was also used for MIMC with total degree
index sets, the running time does not exhibit the same jumps. This is
because the discrete increments in the number of degrees of freedom
are not as significant in this method (cf.
Figure~\ref{fig:dof_vs_tol}).
\inchangesA{Finally, Figure~\ref{fig:dof_vs_tol}
can also be used to estimate the memory requirements of MIMC versus
MLMC.  In this figure we can see that using MIMC with total degree
index sets, allows us to achieve the same value of $\tol$ with
substantially fewer degrees of freedom. In fact, we were not able to
run MLMC or MIMC with full tensor index sets for very small tolerances
due to their memory requirements.

For comparison, Figure~\ref{fig:time_vs_tol_d4} shows the
running time of MLMC and MIMC with a TD index set  when applied to a similar
problem but in four dimensions instead of three. Here, we expect MLMC to have
a work complexity of $\Order{\tol^{-4}}$ while the expected rate of
MIMC with a TD index set is still  $\Order{\tol^{-2}}$. The tolerances
shown for MLMC were the only ones that we were able to compute with 64
gigabytes of memory.
}
\begin{figure}
  \centering
  \includegraphics[scale=0.37]{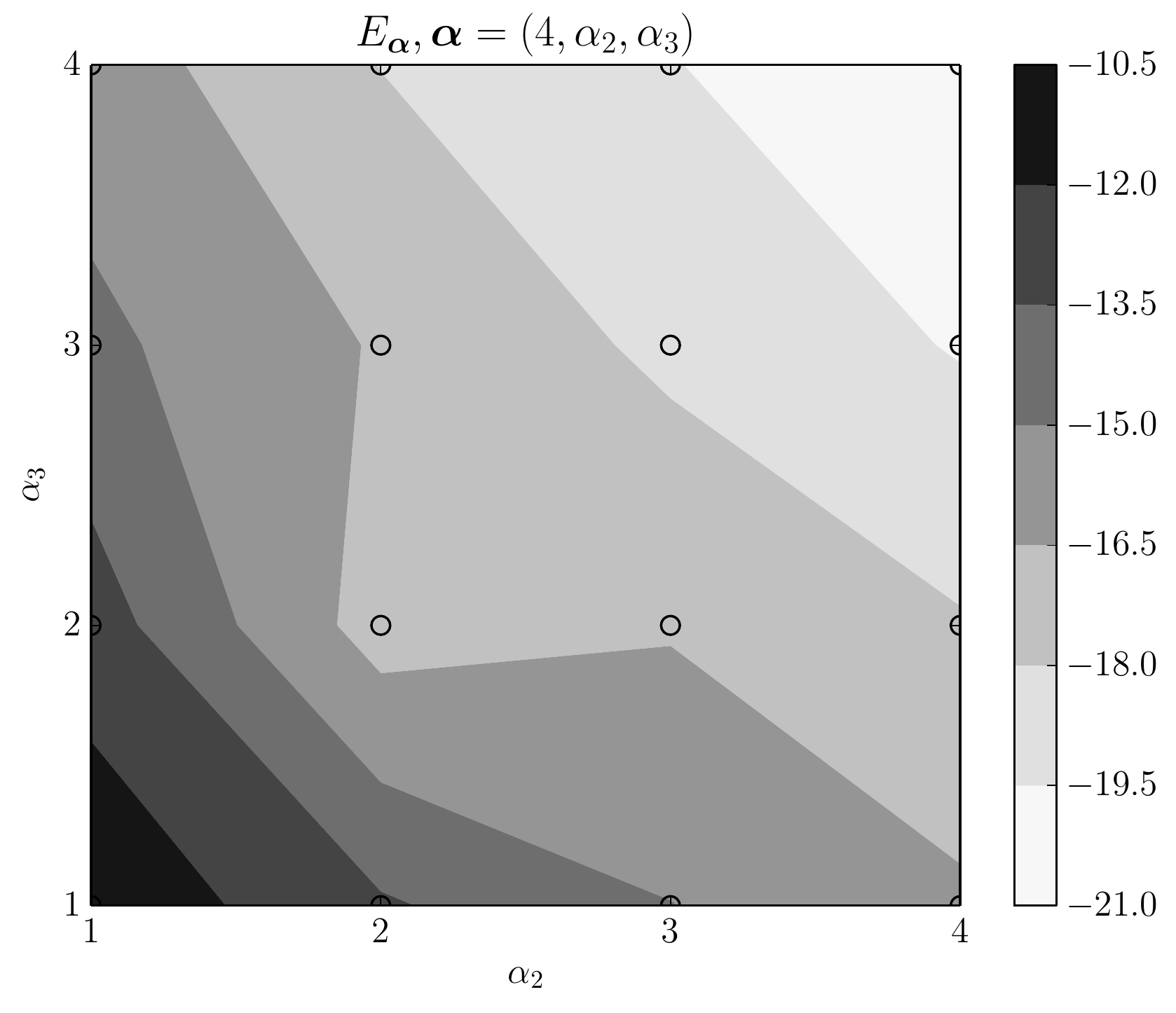}
  \includegraphics[scale=0.37]{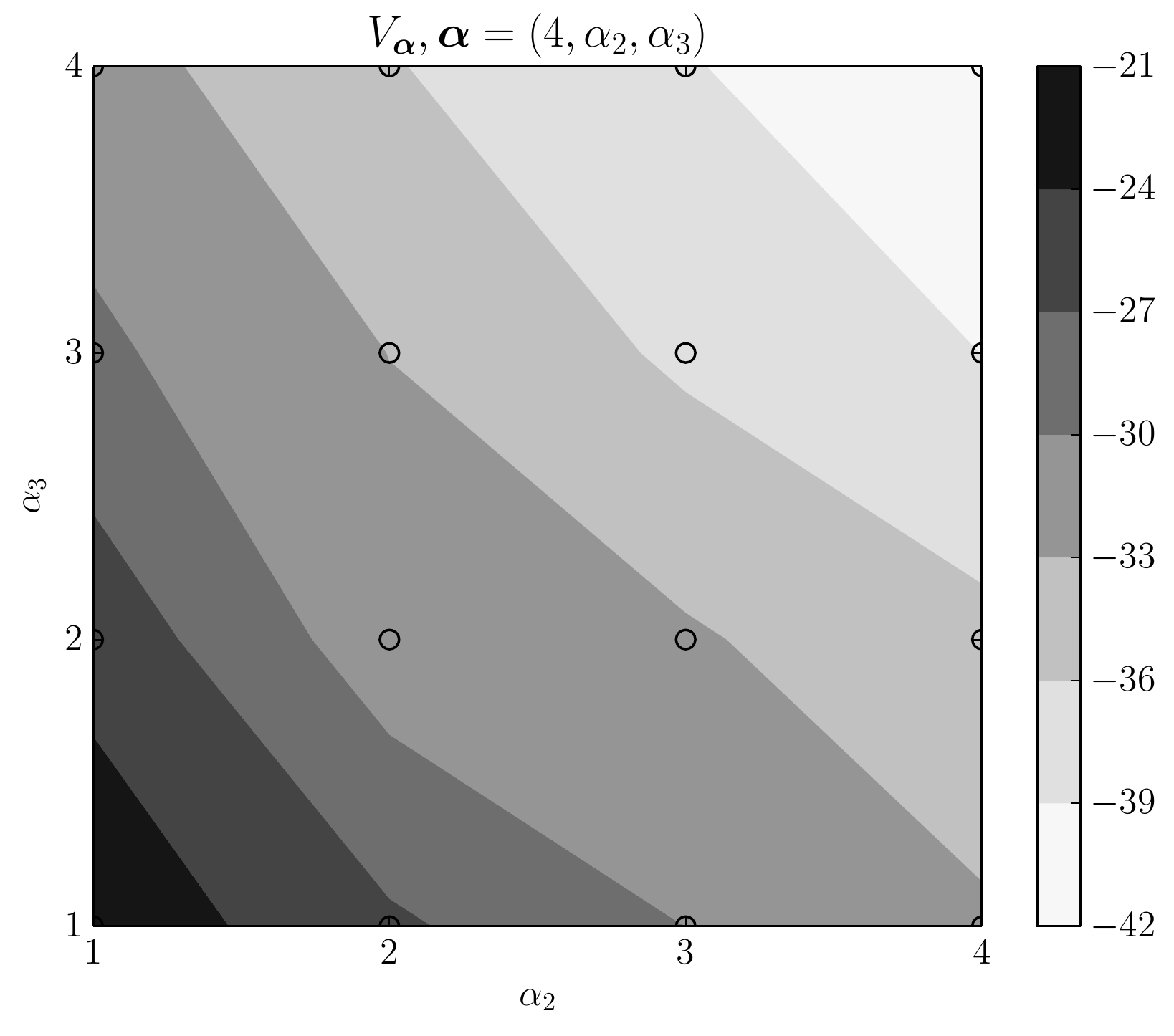}
  \caption{Numerical example, rate verification: contour plots of
    sample mean (left) and variance (right) of mixed differences used
    in MIMC for a slice of multi indices.}
  \label{fig:mimc_contours}
\end{figure}
\captionsetup{width=6cm}

\begin{figure}
  \begin{floatrow}

    \ffigbox[\FBwidth] {\caption{Numerical example, rate verification:
        contour plots of profits used in MIMC for a slice of multi
        indices.  The parallel lines, asymptotically, suggest that
        isotropic TD index sets are nearly optimal in this example.}
      \label{fig:mimc_pl_contours}}
    {\includegraphics[scale=0.37]{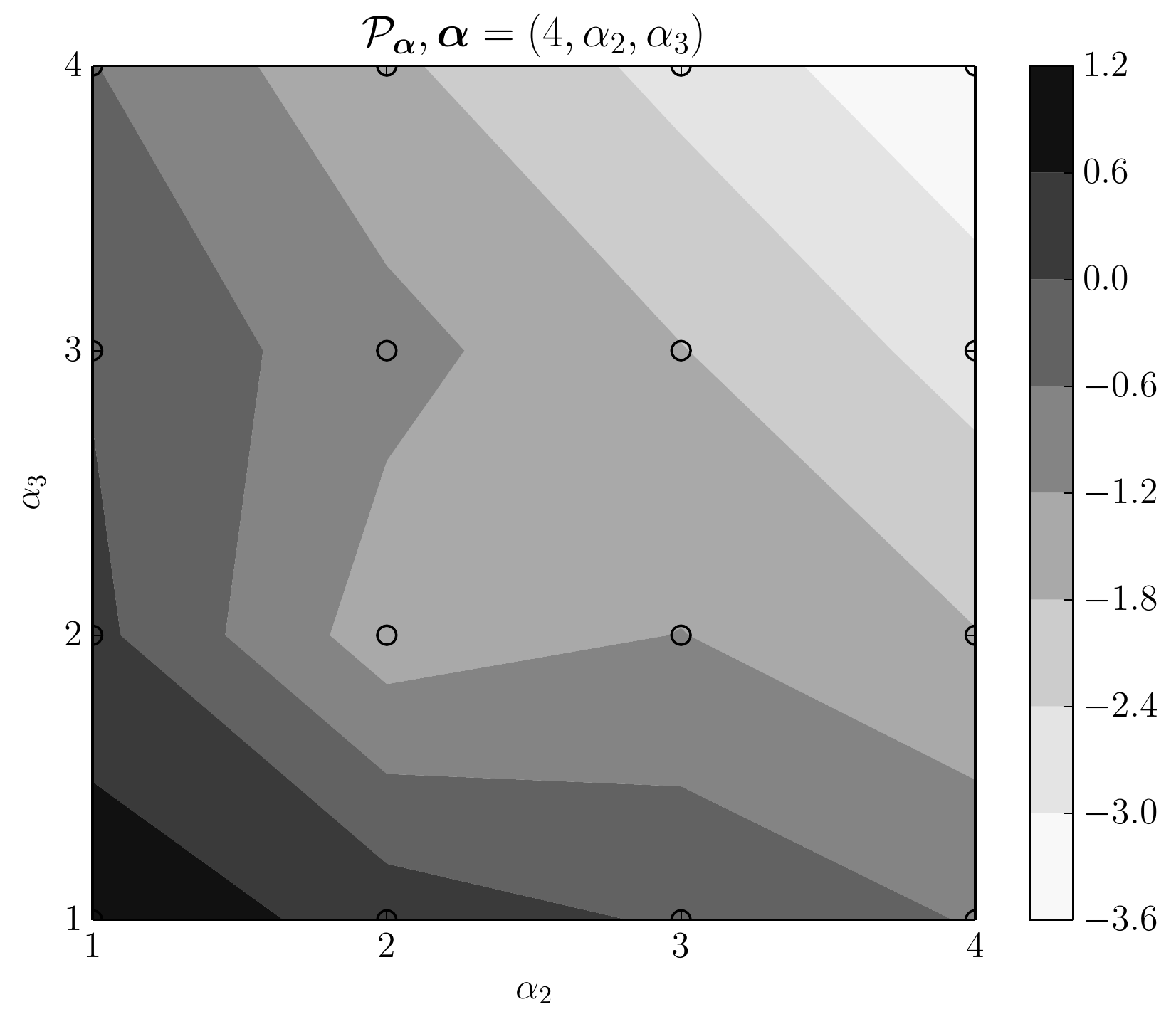}} \ffigbox[\FBwidth]
    {\caption{QQ-plot of the normalized error of the MIMC estimator
        with a TD index set for different values of $\tol$. Similar
        results were obtained for other tolerances using either MLMC
        or MIMC with full tensor index sets. This is in agreement with
        Lemma \ref{thm:clt_result}.}
      \label{fig:qqplot}} {\includegraphics[scale=0.33]{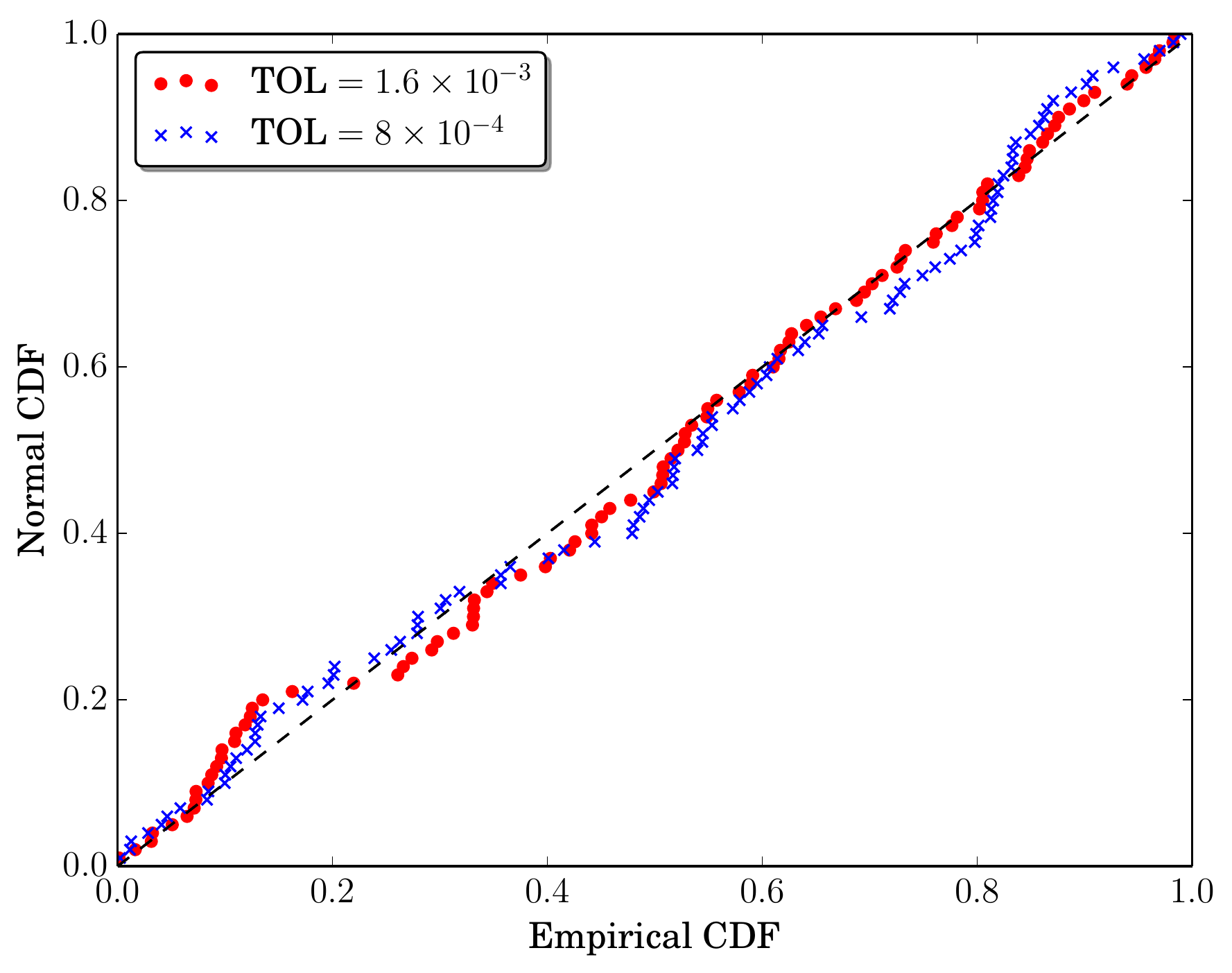}}
  \end{floatrow}
\end{figure}

\captionsetup{width=0.9\textwidth}

\begin{figure}
  \centering
  \includegraphics[scale=0.41]{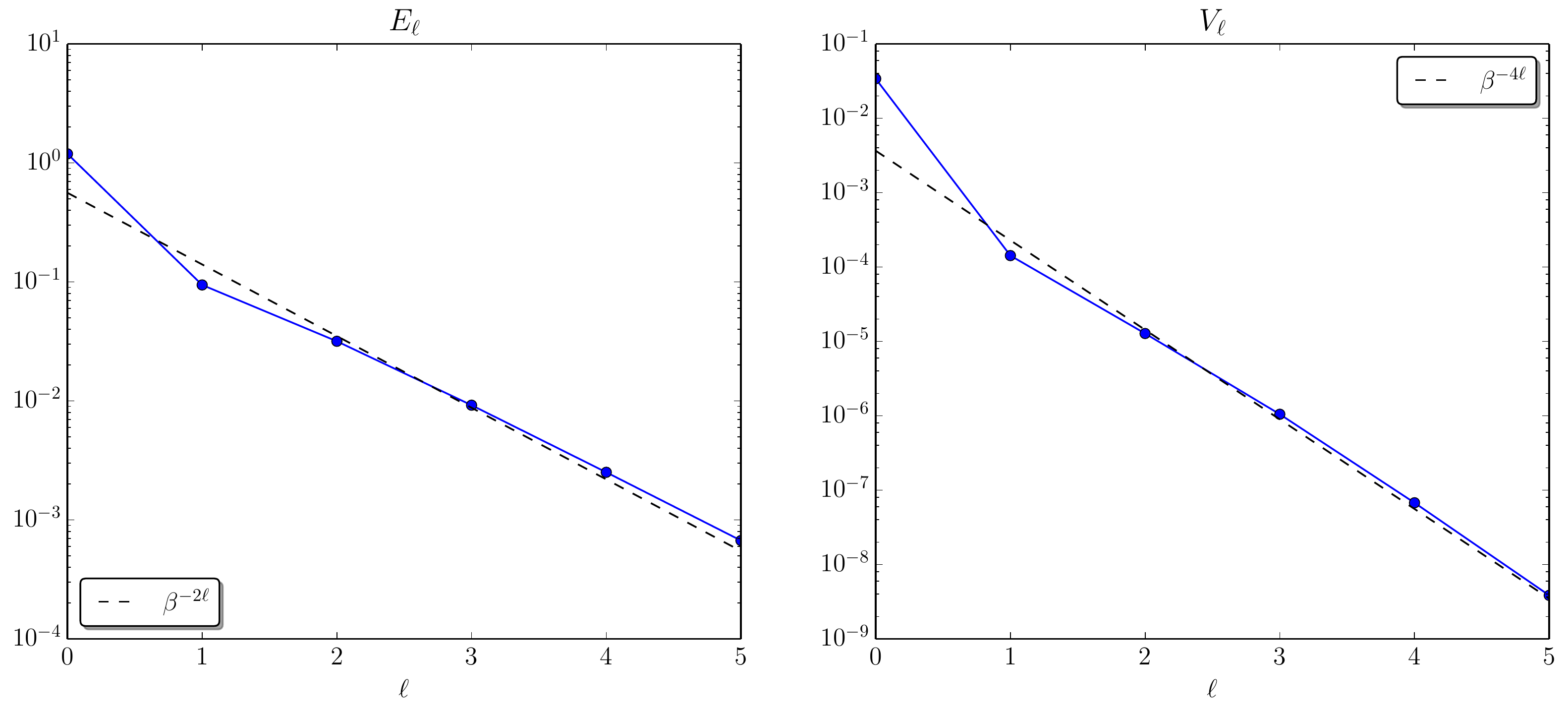}
  \caption{Numerical example, rate verification: sample mean (left) and variance (right) of differences versus level $\ell$ for MLMC.
  Notice that the observed rates are consistent with Remark~\ref{rem:assumptions}.}
  \label{fig:mlmc_el_vl}
\end{figure}

\begin{figure}
  \centering
  \includegraphics[scale=0.41]{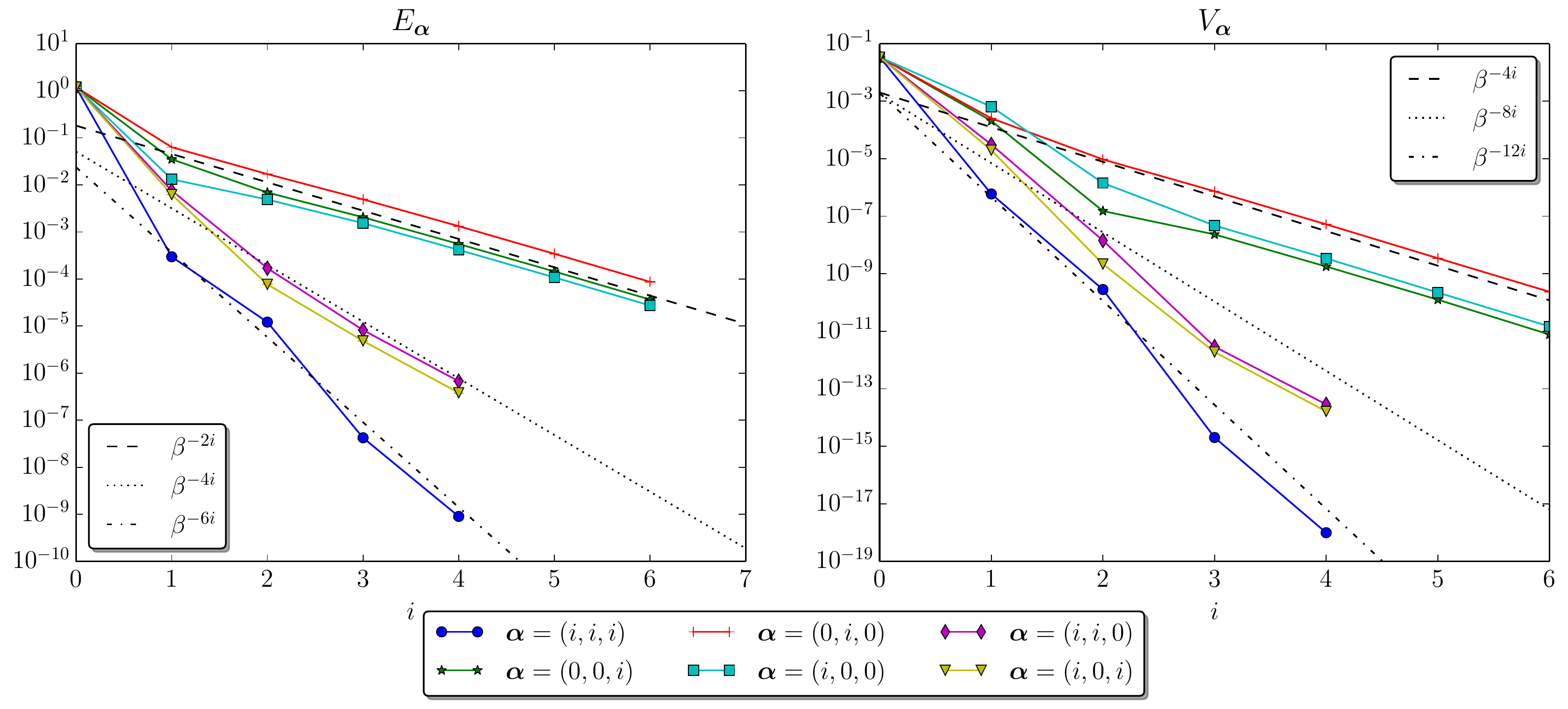}
  \caption{Numerical example, rate verification: sample mean (left) and variance (right) of mixed differences used in MIMC.
    Notice that the observed rates are consistent with Remark~\ref{rem:assumptions} and are
  better than those observed for MLMC, cf. Figure~\ref{fig:mlmc_el_vl}.}
  \label{fig:mimc_el_vl}
\end{figure}

\begin{figure}
  \centering
  \includegraphics[scale=0.6]{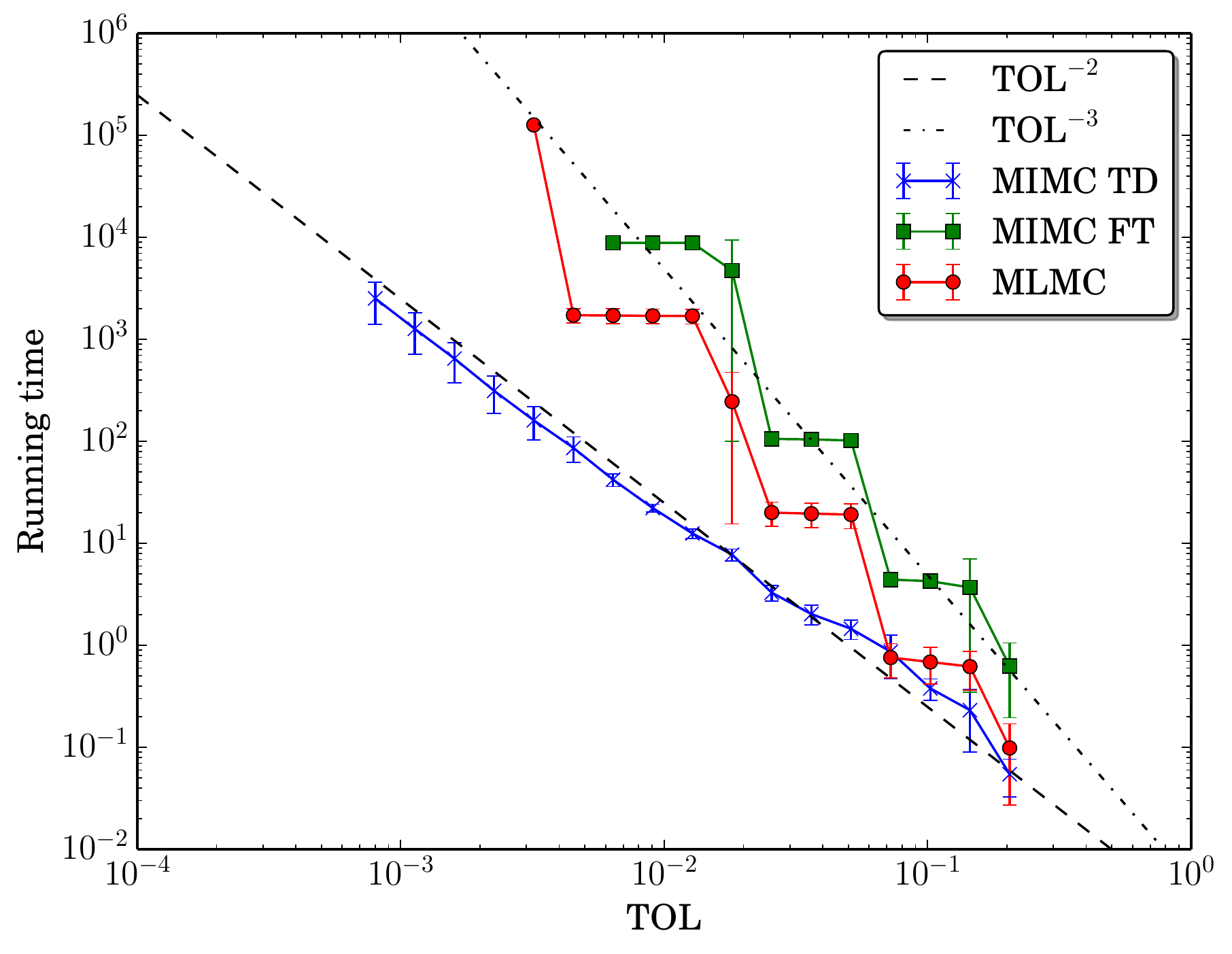}
  \caption{\inchangesA{Running time for different values of $\tol$
      when using MLMC and MIMC with different index sets. The error
      bars extend from the 5\% percentile to the 95\%
      percentile. Notice that the rate of MIMC is the optimal Monte
      Carlo rate of $\Order{\tol^{-2}}$ for this example, while MLMC
      is closer to $\Order{\tol^{-3}}$, in agreement with the results
      listed in Remark~\ref{rem:iso_work} for $d=3, \gamma=2, s=4$ and
      $w=2$. For comparison, recall that MC has a work complexity of
      $\Order{\tol^{-5}}$.}}
  \label{fig:time_vs_tol}
\end{figure}
\captionsetup{width=6cm}
\begin{figure}
  \begin{floatrow}
    \ffigbox[\FBwidth] { \caption{The exact computational error for
        MLMC and MIMC using different index sets. Notice that since we
        imposed a fixed tolerance splitting parameter, {$\theta
          =0.5$}, in some cases our numerical error is slightly
        smaller than the required $\tol$ for all methods.}
      \label{fig:error_vs_tol}
    } {\includegraphics[scale=0.33]{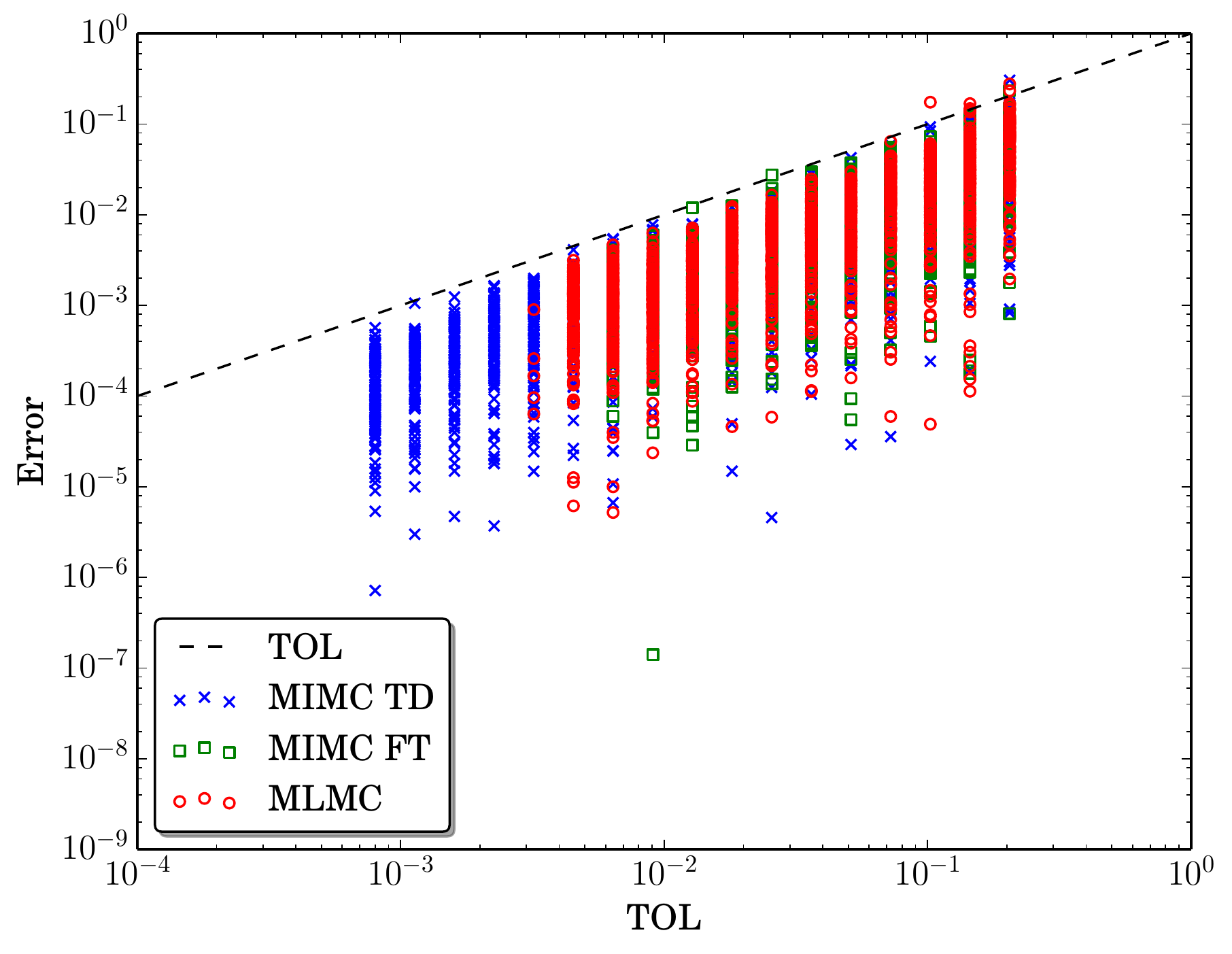}} \ffigbox[\FBwidth]
    { \caption{The maximum number of degrees of freedom across levels
        for MLMC and MIMC with different index sets. Notice that by
        using total degree index sets, we are able to achieve the same value
        of $\tol$ with substantially fewer degrees of freedom. Refer
        to Remark~\ref{rem:memory} for discussions regarding this point.}
      \label{fig:dof_vs_tol}
    }
    {\includegraphics[scale=0.33]{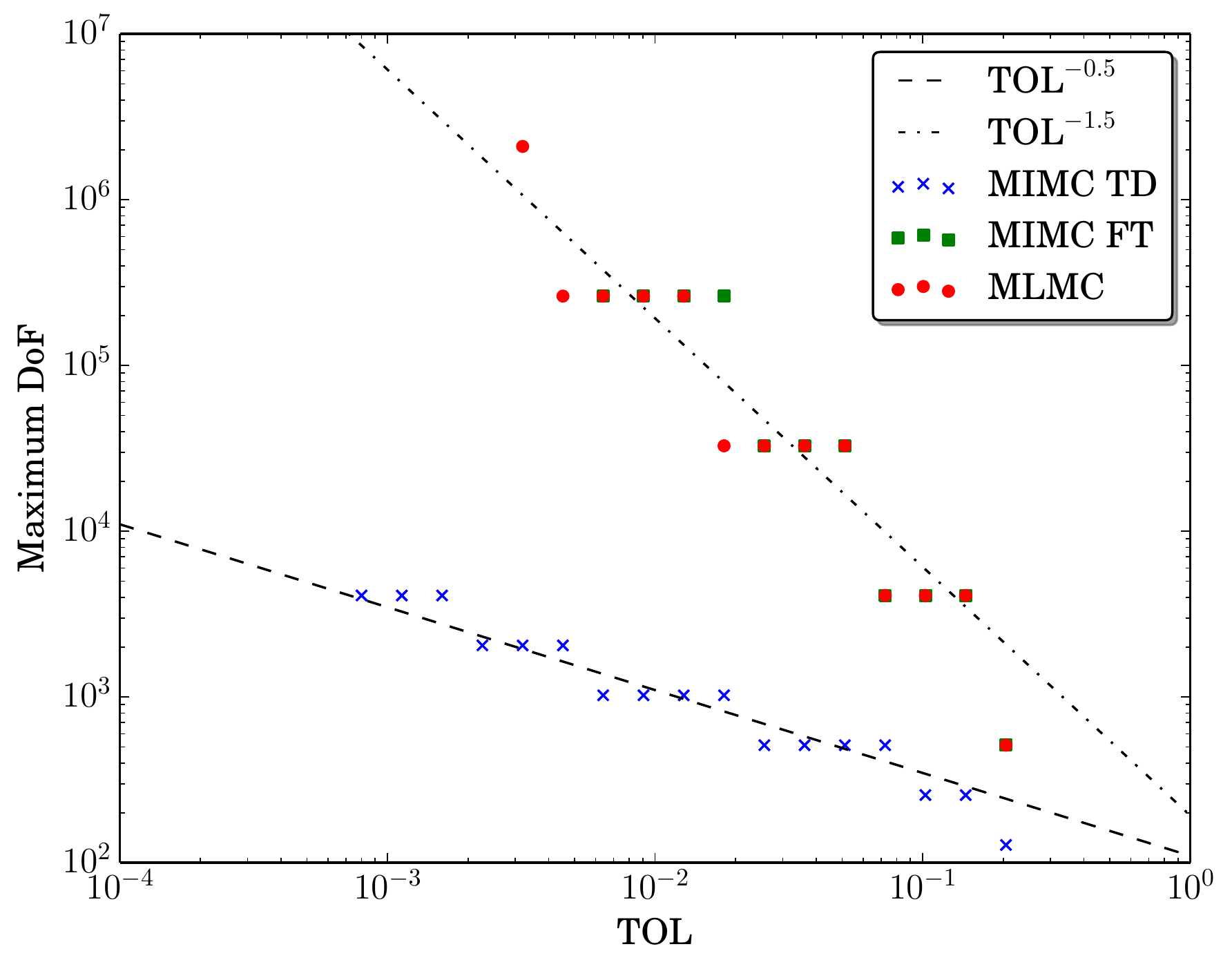}}
  \end{floatrow}
\end{figure}
\captionsetup{width=0.9\textwidth}
\begin{figure}
  \centering
  \includegraphics[scale=0.6]{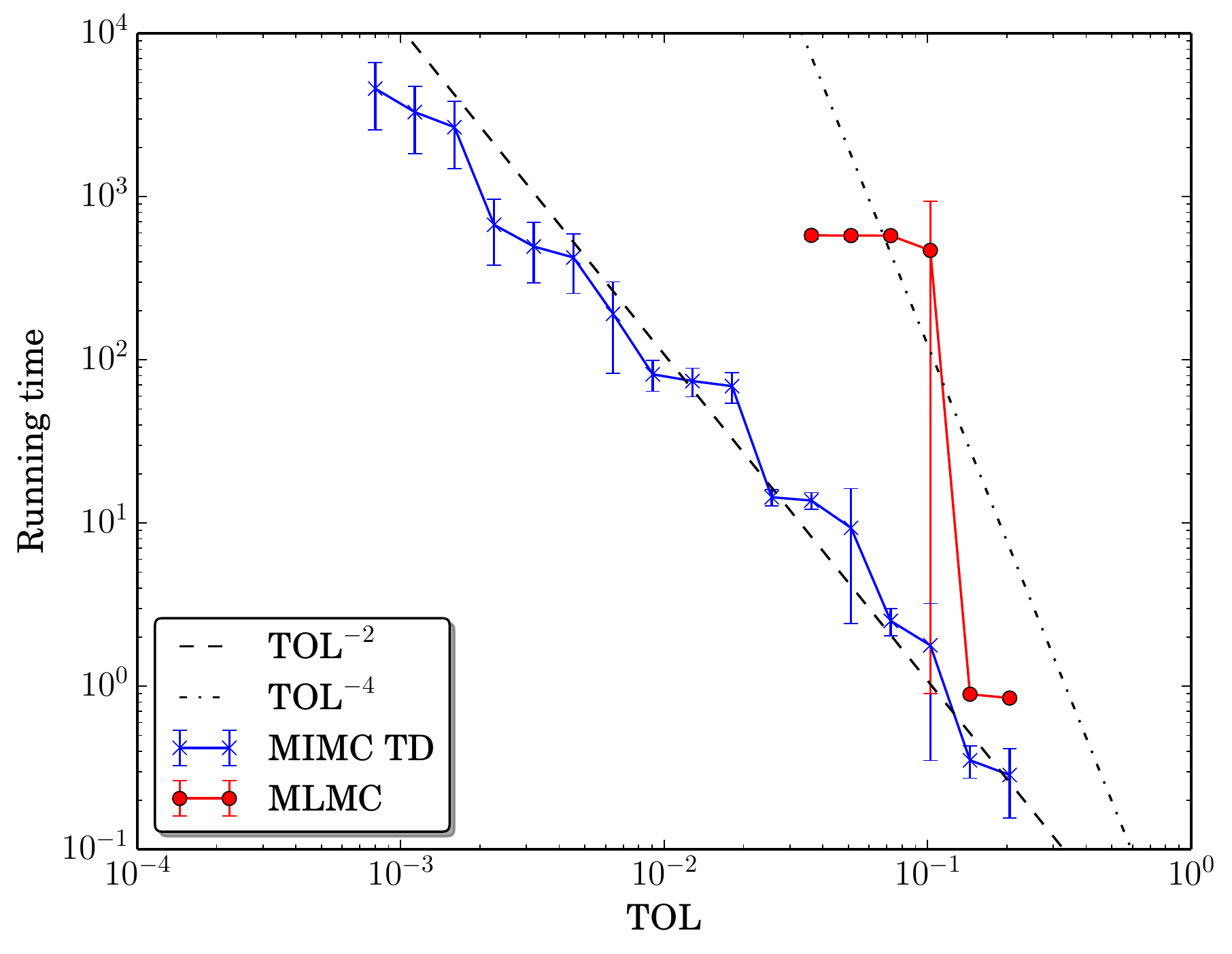}
  \caption{\inchangesA{Running time for different values of $\tol$ for a 4D
    problem when using MLMC and MIMC with TD index sets. The error
    bars extend from the 5\% percentile to the 95\% percentile.
    Notice that the work complexity of MIMC for this example has the
    optimal Monte Carlo rate of $\Order{\tol^{-2}}$.  According to
    Remark~\ref{rem:iso_work} for $d=3, \gamma=2, s=4$ and $w=2$,
    MLMC is expected to have a work complexity of
    $\Order{\tol^{-4}}$. However, we were not able to compute for
    smaller tolerances with 64 gigabytes of available memory. For
    comparison, recall that MC has a work complexity of $\Order{\tol^{-6}}$. }}
  \label{fig:time_vs_tol_d4}
\end{figure}

\makeatletter{} \section{Conclusions}\label{s:conc} We have proposed and analyzed a
 novel Multi-Index Monte Carlo (MIMC) method for weak approximation of
 stochastic models that are described in terms of differential
 equations either driven by random measures or with random
 coefficients. The MIMC method uses a stochastic combination technique
 to solve the given approximation problem, generalizing the notion of
 standard MLMC levels into a set of multi indices that should be
 properly chosen to exploit the available regularity. Indeed, instead
 of using first-order differences as in standard MLMC, MIMC uses
 high-order differences to reduce the variance of the hierarchical
 differences dramatically. This in turn gives a new improved
 complexity result that increases the domain of the problem parameters
 for which the method achieves the optimal convergence rate,
 $\mathcal{O}(\tol^{-2}).$ We have outlined a method for constructing
 an optimal index set of indices for our MIMC method. Moreover, under
 our standard assumptions, we showed that the optimal index set turns
 out to be the total degree (TD) type. Using optimal index sets, MIMC
 achieves a better rate for the computational complexity than when
 using full tensor index sets; in fact, the rate does not depend on
 the dimensionality of the underlying problem, up to logarithmic
 factors.  \inchangesA{Similarly, the rate of required memory for MIMC
   with respect to $\tol$ is, up to a logarithmic terms,
   dimension-independent (unlike MLMC) allowing us to solve for
   smaller tolerances than is possible with MLMC.}  {In addition, for
   MIMC with TD index sets, the conditions on the weak convergence
   rate for achieving such rates are dimension-independent and less
   stringent, compared with similar conditions for MLMC and MIMC with
   full tensor index sets.} We also presented numerical results to
 substantiate some of the derived computational complexity rates. In
 Appendix~\ref{app:clt}, using the Lindeberg-Feller theorem, we also
 show the asymptotic normality of the statistical error in the MIMC
 estimator and justify in this way our error estimate that allows both
 the required accuracy and confidence level in the final result to be
 prescribed.

Our method requires more regularity of the underlying solution than
does MLMC. If the underlying solution is sufficiently regular only
in some directions, then one can still combine MIMC with MLMC by applying mixed first-order differences to the sufficiently regular
directions, while applying a single first-order difference to less regular directions.

In future work, more has to be done to improve the MIMC algorithm,
using the variance convergence
model to estimate the variances instead of relying
on sample variance only; for example, by applying ideas such as those in \cite{haji_CMLMC}.
{Also}, a better choice of the splitting parameter, $\theta$, can be derived to improve the computational complexity up to a constant factor; similar to
the work done in \cite{haji_opt}.
Moreover, MIMC can be used to improve the computational complexity rate in the case of PDEs with random  fields that are approximated by converging series, such as a Karhunen-Lo\'eve
decomposition, cf. \cite{tsgu13}.
By treating the number of terms in the decomposition as an extra discretization direction and applying MIMC, we might be able to improve the computational complexity. Also, the use of either \emph{a priori} refined non-uniform discretizations or adaptive algorithms based on \emph{a posteriori} error estimates for non-uniform refinement as introduced in \cite{hsst12,hsst13,Moraes2014_MLMC_HTL} can be combined with MIMC to improve efficiency.
Finally, ideas from \cite{van2014multilevel} {and \cite{Harbrecht_H13_MLSC}} can be extended by replacing the Monte Carlo sampling of mixed differences in MIMC by a sparse-grid stochastic collocation,
effectively including interpolation levels along the different random directions into the combination technique
together with the other discretization parameters. Similarly, we can apply Quasi Monte Carlo to replace Monte Carlo sampling of the mixed differences in MIMC as outlined in \cite{kss12_MLQMC} for a multilevel setting. Provided that there is enough mixed regularity in the problem at hand, we expect to improve again the optimal complexity further from $\mathcal{O}(\tol^{-2})$ in MIMC to $\mathcal{O}(\tol^{-r})$ with $r<2$.

\section*{Acknowledgments} Ra\'{u}l~Tempone is a member of the Special
Research Initiative on Uncertainty
Quantification (SRI-UQ), Division of Computer, Electrical and Mathematical Sciences and Engineering (CEMSE)
    at King Abdullah University of Science and Technology (KAUST).
The authors would like to recognize the support of KAUST AEA project ``Predictability and Uncertainty
Quantification for Models of Porous Media'' and University of Texas at Austin AEA Round 3
``Uncertainty quantification for predictive modeling of the dissolution of porous and fractured media''.
The second author acknowledges the support of the Swiss National Science Foundation under the Project No.
140574 ``Efficient numerical methods for flow and transport phenomena in heterogeneous random porous media''.
The authors would also like to thank Prof. Mike Giles for his valuable comments
on this work.
\appendix
\makeatletter{}\section{Asymptotic Normality of the MIMC estimator}
\label{app:clt}
\begin{lemma}[Asymptotic Normality of the MIMC Estimator]
\label{thm:clt_result}
Consider the MIMC estimator introduced in \eqref{eq:gmlmc}, $\mathcal{A}$, based on a set of multi indices, $\mathcal{I}(\tol)$, and given by
  \begin{equation*} \mathcal{A} = \sum_{\valpha \in \mathcal{I}}
    \sum_{m=1}^{M_\valpha} \frac{\estS_{\valpha}(\omega_{\valpha, m})}{M_\valpha}.
    \end{equation*}
  Assume that for $1\le i\le d$ there exists  $0<L_i(\tol)$ such that
\begin{equation}\label{eq:set_cond}
 \mathcal{I}(\tol) \subset \{\valpha\in \nset^d : \alpha_i \leq L_i(\tol), \text{ for } 1\le i\le d\}.
\end{equation}
      Denote ${Y_\valpha = |\estS_\valpha - \E{\estS_\valpha}|}$ and assume that the following inequalities
    \begin{subequations}
    \begin{align}
        \label{eq:2nd-moment-bound} Q_S \prod_{i=1}^d \exp(-\alpha_i s_i) &\leq \E{Y_{\valpha}^2}, & \\
        \label{eq:4th-moment-bound} \E{Y_{\valpha}^{2+\rho}}    &\leq Q_R \prod_{i=1}^d \exp(-\alpha_i r_i),&
      \end{align}
        \end{subequations}
hold  for strictly positive constants $\rho, \{s_i, r_i\}_{i=1}^d, Q_S$ and $Q_R$.
  Choose the number of samples on each level, $M_\valpha(\tol)$, to satisfy,
  for  strictly positive sequences  $\{\tilde s_i\}_{i=1}^d$ and $\{H_\vtau\}_{\vtau \in \mathcal{I}(\tol)}$
  and for all $\valpha \in \mathcal{I}(\tol)$,
  \begin{align}
    M_\valpha \geq \tol^{-2} \,C_M \left(\prod_{i=1}^d \exp(-\alpha_i\tilde s_i) \right)  H_\valpha^{-1} \left( \sum_{\vtau \in \mathcal{I}(\tol)} H_\vtau \right).
 \label{eq:chosen_ml}
\end{align}
Denote, for all $1 \leq i \leq d,$
\begin{equation}
  \label{eq:clt_cond_pi}
  p_i = (\rho/2) \tilde s_i  - r_i + (1+\rho/2) s_i
\end{equation}
and choose $0 < c_i$ such that whenever $0 < p_i$,  the inequality $c_i < \rho/p_i$ holds.
Finally, if we take the quantities $L_i(\tol)$ in \eqref{eq:set_cond} to be
  $$
 L_i(\tol) = c_i \log(\tol^{-1}) + \order{\log(\tol^{-1})}, \text{ for all }1 \leq i \leq d,
 $$
then we have
\[\lim_{\tol \downarrow 0} \prob{\frac{\mathcal{A}- \E{\mathcal A}}{\sqrt{\var{\mathcal A}}} \leq z} = \Phi \left(z \right), \]
  where $\Phi (z)$ is the normal cumulative distribution function of a standard normal random variable.
\end{lemma}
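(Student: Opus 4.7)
The approach is to apply the Lindeberg--Feller Central Limit Theorem to the triangular array of independent, mean-zero random variables
\[
X_{\valpha,m}(\tol) \;:=\; \frac{\estS_\valpha(\omega_{\valpha,m}) - \E{\estS_\valpha}}{M_\valpha}, \qquad \valpha \in \mathcal{I}(\tol),\ 1\le m\le M_\valpha,
\]
whose sum is exactly $\mathcal{A} - \E{\mathcal{A}}$ and whose total variance is $s_\tol^2 := \var{\mathcal{A}} = \sum_\valpha V_\valpha/M_\valpha$. Since the higher-moment hypothesis \eqref{eq:4th-moment-bound} supplies a uniform $(2+\rho)$-moment bound, I would verify the stronger Lyapunov condition
\[
\mathcal{R}(\tol) \;:=\; \frac{1}{s_\tol^{2+\rho}}\sum_{\valpha \in \mathcal{I}(\tol)} M_\valpha^{-(1+\rho)}\,\E{Y_\valpha^{2+\rho}} \;\xrightarrow[\tol\downarrow 0]{}\; 0,
\]
which implies Lindeberg's condition and hence convergence of the normalized estimator to a standard normal.

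The main computational step is to reduce $\mathcal{R}(\tol)$ to a closed-form bound in $\tol$. I would rewrite $\mathcal{R}(\tol) = \sum_\valpha q_\valpha^{1+\rho/2}\,\xi_\valpha$ with probability weights $q_\valpha := (V_\valpha/M_\valpha)/s_\tol^2 \in [0,1]$ summing to $1$ and per-level factors $\xi_\valpha := M_\valpha^{-\rho/2}\,\E{Y_\valpha^{2+\rho}}/V_\valpha^{1+\rho/2}$. Since $\sum_\valpha q_\valpha^{1+\rho/2} \le \sum_\valpha q_\valpha = 1$, one obtains $\mathcal{R}(\tol) \le \max_\valpha \xi_\valpha$. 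Plugging the moment bounds \eqref{eq:2nd-moment-bound} and \eqref{eq:4th-moment-bound} into $\xi_\valpha$, substituting the sample-size rule \eqref{eq:chosen_ml}, and collecting exponents in each $\alpha_i$ gives
\[
\mathcal R(\tol) \;\le\; C\,\tol^{\rho}\,\max_{\valpha\in\mathcal{I}(\tol)} \prod_{i=1}^{d} \exp(\alpha_i\, p_i),
\]
with the exponents $p_i$ being precisely those of \eqref{eq:clt_cond_pi}.

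I would then use the containment \eqref{eq:set_cond} together with $L_i(\tol) = c_i\log(\tol^{-1}) + o(\log(\tol^{-1}))$ to evaluate the maximum: it is attained at $\alpha_i = L_i(\tol)$ when $p_i > 0$ and at $\alpha_i = 0$ when $p_i \le 0$. Exploiting the strict inequality $c_i < \rho/p_i$ for every direction with $p_i > 0$, combined with the structure of $\mathcal I(\tol)$ (for total-degree and similar index sets only a single ``worst'' direction saturates the bounding-box constraint at the maximizer, by the vertex characterization of linear maximization over a simplex), yields an exponent on $\tol$ that is strictly positive, so $\mathcal{R}(\tol)\to 0$ as required and Lindeberg--Feller delivers the claimed Gaussian limit.

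The principal obstacle is securing a sufficiently sharp \emph{lower} bound on $s_\tol^2$: relation \eqref{eq:chosen_ml} only lower-bounds $M_\valpha$ and therefore a priori only upper-bounds each contribution $V_\valpha/M_\valpha$. I would resolve this either by arguing that in practical MIMC implementations \eqref{eq:chosen_ml} is saturated up to multiplicative constants (as in \eqref{eq:optimal_M_bound}), so the variance is pinned down on both sides, or by retaining in the lower bound on $s_\tol^2$ a single dominant index $\valpha_*\in\mathcal{I}(\tol)$ for which \eqref{eq:2nd-moment-bound} and \eqref{eq:chosen_ml} are tight, accepting a polylogarithmic loss that is absorbed into the $o(1)$ of the final exponent.
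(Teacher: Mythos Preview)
Your approach coincides with the paper's: both verify Lindeberg--Feller via the Lyapunov ratio (the paper arrives at the same quantity from the Lindeberg sum by Markov's inequality with exponent $\rho$). The key algebraic device is also the same. Writing $\mathcal R(\tol)=\sum_\valpha q_\valpha^{1+\rho/2}\xi_\valpha$ with $q_\valpha=(V_\valpha/M_\valpha)/s_\tol^2$ and using $\sum_\valpha q_\valpha^{1+\rho/2}\le\sum_\valpha q_\valpha=1$ eliminates $s_\tol^2$ from the analysis entirely; the paper does the equivalent step via the elementary inequality $\sum_\valpha a_\valpha^{q}b_\valpha\le(\sum_\valpha a_\valpha)^{q}\sum_\valpha b_\valpha$ with $a_\valpha=V_\valpha/M_\valpha$ and $q=1+\rho/2$, arriving at $\sum_\valpha\xi_\valpha$ rather than your $\max_\valpha\xi_\valpha$. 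Either way no lower bound on $s_\tol^2$ is required, so your ``principal obstacle'' paragraph can simply be deleted: your own decomposition has already resolved it. The residual factor $H_\valpha^{\rho/2}\bigl(\sum_\vtau H_\vtau\bigr)^{-\rho/2}\le1$ coming from \eqref{eq:chosen_ml} is disposed of by the same mechanism (the paper applies the same inequality once more).

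The only substantive divergence is your endgame. The lemma assumes nothing about $\mathcal I(\tol)$ beyond the box containment \eqref{eq:set_cond}, so your appeal to a simplex/TD structure to argue that ``only a single worst direction saturates the bounding-box constraint'' is not licensed by the hypotheses. The paper instead stays with the box, factoring $\sum_{\valpha}\prod_i\exp(p_i\alpha_i)\le\prod_i\sum_{\alpha_i=0}^{L_i}\exp(p_i\alpha_i)$ and treating each direction separately according to the sign of $p_i$; you should do the same with your $\max$. (Over the full box the resulting exponent on $\tol$ is $\rho-\sum_{i:p_i>0}c_ip_i$, which in fact needs the summed condition rather than the per-direction one stated; that subtlety is present in the paper's argument as well and is not specific to your route.)
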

\begin{proof}
  We prove this theorem by ensuring that the Lindeberg condition
  \cite[Lindeberg-Feller Theorem, p. 114]{Durret1996} (also restated in \cite[Theorem A.1]{haji_CMLMC}) is satisfied.
    The condition becomes in this case
    \[ \lim_{\tol \downarrow 0} \underbrace{\frac{1}{\var{\mathcal A}} \sum_{\valpha \in \mathcal{I}(\tol)} \sum_{m=1}^{M_\valpha} \E{ \frac{Y_\valpha^2}{M_\valpha^2} \mathbf{1}_{\frac{Y_\valpha}{M_\valpha}
        > \epsilon \sqrt{\var{\mathcal A}}}}}_{= F} = 0,\]
    for all $\epsilon > 0$. Below we make repeated use of the following identity for non-negative sequences ${\{a_\valpha \}}$ and ${\{b_\valpha \}}$ and $q \geq 0$:
    \begin{equation} \sum_{\valpha}{a_\valpha^q b_\valpha} \leq \left( \sum_\valpha a_\valpha \right)^q \sum_\valpha b_\valpha. \label{eq:identity} \end{equation}
    First, we use the Markov inequality to bound
    \begin{align*}
        F &=  \frac{1}{\var{\mathcal A}} \sum_{\valpha \in \mathcal{I}(\tol)} \sum_{m=1}^{M_\valpha} \E{ \frac{Y_\valpha^2}{M_\valpha^2} \mathbf{1}_{Y_\valpha >
                    \epsilon \sqrt{\var{\mathcal A}} M_\valpha}} \\
        &\leq \frac{\epsilon^{-\rho}}{\var{\mathcal A}^{1+\rho/2}}  \sum_{\valpha \in \mathcal{I}(\tol)}  M_\valpha^{-1-\rho} \E{Y_\valpha^{2+\rho}}.
    \end{align*}
    Using \eqref{eq:identity} and substituting for the variance
    $\var{\mathcal A}$ where we denote $\var{\estS_\valpha} = \E{\left( \Delta\mathcal  S_\valpha - \E{ \Delta
          \mathcal S_\valpha} \right)^{2}}$ by $V_\valpha$, we find
    \begin{align*}
        F &\leq \frac{\epsilon^{-\rho} \left( \sum_{\valpha \in \mathcal{I}(\tol)}  M_\valpha^{-1} V_\valpha \right)^{1+\rho/2}}{\left( \sum_{\valpha \in \mathcal{I}(\tol)} V_\valpha M_\valpha^{-1} \right)^{1+\rho/2}}
                \sum_{\valpha \in \mathcal{I}(\tol)} V_\valpha^{-1-\rho/2} M_\valpha^{-\rho/2} \E{Y_\valpha^{2+\rho}} \\
        &= \epsilon^{-\rho}
                \sum_{\valpha \in \mathcal{I}(\tol)} V_\valpha^{-1-{\rho}/{2}} M_\valpha^{-{\rho}/{2}} \E{Y_\valpha^{2+\rho}}.
    \end{align*}
     Using the lower bound      in {\eqref{eq:chosen_ml}} on the number of samples, $M_\valpha$,
          and \eqref{eq:identity}, again yields
    \begin{align*}
        F &\leq C_M^{-\rho/2} \epsilon^{-\rho} \tol^\rho \left(
          \sum_{\valpha \in \mathcal{I}(\tol)}
          V_\valpha^{-1-{\rho}/{2}} \left(\prod_{i=1}^d
            \exp\left(\frac{\rho \alpha_i\tilde s_i}{2}\right) \right)
          H_\valpha^{\rho/2} \E{Y_\valpha^{2+\rho}} \right) \\ & \hskip8em
                \left( \sum_{\vtau \in \mathcal{I}(\tol)} H_\vtau \right)^{-\rho/2} \\
        &\leq C_M^{-\rho/2} \epsilon^{-\rho} \tol^\rho \left( \sum_{\valpha \in \mathcal{I}(\tol)} V_\valpha^{-1-{\rho}/{2}}\left(\prod_{i=1}^d \exp\left(\frac{\rho \alpha_i\tilde s_i}{2}\right) \right) \E{Y_\valpha^{2+\rho}} \right).
                \end{align*}
    Finally, using the bounds \eqref{eq:2nd-moment-bound} and \eqref{eq:4th-moment-bound},
    \begin{align*}
      F &\leq \underbrace{C_M^{-\rho/2} \epsilon^{-\rho} Q_S^{-1-{\rho}/{2}} Q_R}_{= C_F}
     \tol^\rho  \left(
 \sum_{\valpha \in \mathcal{I}(\tol)}
          \left(\prod_{i=1}^d \exp\left(p_i \alpha_i \right) \right)
        \right).
                      \end{align*}
       Next, define three sets of dimension indices:
\begin{equation*}
\aligned
\hat I_1 &= \{1\le i \le d: p_i < 0\}, \\
\hat I_2 &= \{1\le i \le d: p_i = 0\}, \\
\hat I_3 &= \{1\le i \le d: p_i > 0\}.
\endaligned
\end{equation*}
Then, using \eqref{eq:set_cond} yields
      \[
      \aligned
      F \leq& C_F \tol^\rho \prod_{i=1}^d \left(  \sum_{\alpha_i=0}^{L_i}  \exp\left(p_i \alpha_i \right) \right)\\
      \leq &  C_F \tol^\rho \prod_{i \in \hat I_1} \frac{1}{1-\exp(p_i)} \prod_{i \in \hat I_2} L_i \prod_{i \in \hat I_3}
      \frac{1 - \exp(p_i (L_i+1))}{1-\exp(p_i)}.
      \endaligned
      \]
            To conclude, observe that if $|\hat I_3| = 0$, then $\lim_{\tol \downarrow 0} F= 0$ for any choice of $L_i\geq  0$, $1\le i\le d$.
      Similarly, if $|\hat I_3| > 0$, since we assumed that $c_i p_i < \rho$ holds for all $i \in \hat I_3$, then $\lim_{\tol \downarrow 0} F= 0$.
\end{proof}

\begin{remark*}
  The lower bound on the number of samples per index \eqref{eq:chosen_ml} mirrors choice \eqref{eq:optimal_M},
  the latter being the optimal number of samples satisfying  constraint \eqref{eq:stat_const}.
  Specifically, $H_\valpha = \sqrt{V_\valpha W_\valpha}$ and $\tilde s_i = s_i$.
  Furthermore, notice that the previous Lemma bounds the growth of $L$ from above,
  while Theorem~\ref{thm:gmlmc_ft} and Theorem~\ref{thm:gmlmc_td_opt} bound the value of $L$ from below to satisfy the bias accuracy constraint.
\end{remark*}

\section{Integrating an exponential over a
  simplex}\label{app:exp_simplex}
\begin{lemma}
  \label{lem:exp_simplex}
The following identity holds for any $L > 0$ and $a \in \rset$:
\begin{equation}\label{eq:int_identity}
\aligned
\intlim{\vec x \in \rsetp^d}{|\vec x| \leq L} \exp(a |\vec x|)
\textnormal{d}\vec x &= (-a)^{-d} \left( 1 - \exp(La) \sum_{j=0}^{d-1}
  \frac{(-La)^j}{j!} \right)\\
&= \frac{1}{(d-1)!} \int_{0}^{L} \exp(at) t^{d-1}\:\textnormal{d}t.
\endaligned
\end{equation}
\end{lemma}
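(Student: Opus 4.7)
The plan is to establish the two equalities in sequence: first the slicing identity (second equality), then the closed-form evaluation (first equality) by induction on $d$.

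For the second equality, the key observation is that the integrand depends on $\vec x$ only through $t = |\vec x|$. Let $V(t) = \mathrm{Vol}(\{\vec x \in \rsetp^d : |\vec x| \leq t\}) = t^d/d!$ denote the standard simplex volume. Then $V$ plays the role of a cumulative distribution for $|\vec x|$ under the restriction to $\rsetp^d$, and by a layer-cake / coarea argument,
\[
\intlim{\vec x \in \rsetp^d}{|\vec x| \leq L} \exp(a|\vec x|)\, \textnormal{d}\vec x
= \int_0^L \exp(at)\, V'(t)\, \textnormal{d}t
= \frac{1}{(d-1)!}\int_0^L \exp(at)\, t^{d-1}\, \textnormal{d}t.
\]
Alternatively, one can peel off the coordinate $x_d$ by Fubini, observe that the remaining inner integral is of the same form over $\rsetp^{d-1}$ with upper bound $L - x_d$, and iterate.

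For the first equality, I would proceed by induction on $d$. The base case $d = 1$ is a direct computation: $\int_0^L e^{at}\, \textnormal{d}t = a^{-1}(e^{aL} - 1) = (-a)^{-1}(1 - e^{aL})$, matching the claim since the sum reduces to its single $j=0$ term. For the inductive step, integrating by parts in
\[
I_d \eqdef \frac{1}{(d-1)!}\int_0^L e^{at}\, t^{d-1}\, \textnormal{d}t
\]
by differentiating the monomial and integrating the exponential yields the recursion
\[
I_d = \frac{e^{aL} L^{d-1}}{a(d-1)!} - \frac{I_{d-1}}{a}.
\]
Substituting the inductive hypothesis for $I_{d-1}$ and using the algebraic identity $(-a)^{-d}(-aL)^{d-1}/(d-1)! = -L^{d-1}/(a(d-1)!)$, the boundary term is absorbed into the sum as the $j = d-1$ contribution, producing the stated closed form for $I_d$.

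The main obstacle is purely book-keeping: tracking the sign of $(-a)^{-d}$ and verifying that the summation index correctly extends from $d-2$ to $d-1$ upon folding in the boundary term. No analytic subtlety arises, and the identity is valid for all $a \in \rset$ (including $a = 0$ by the obvious limiting argument, though in that case the direct evaluation $L^d/d!$ is simpler). A side remark: one could instead expand $e^{at}$ as its Taylor series, integrate termwise, and recognize the resulting tail-of-exponential series to produce the closed form, but this is more cumbersome than the direct induction.
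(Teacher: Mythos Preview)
Your argument is correct but proceeds in the opposite order from the paper. The paper first proves the closed form $(-a)^{-d}(1-\exp(La)\sum_{j=0}^{d-1}(-La)^j/j!)$ by induction on $d$ directly on the multidimensional integral: it rescales to the unit simplex with parameter $b=aL$, peels off one coordinate, and applies the inductive hypothesis to the inner $(d-1)$-dimensional integral over a shrunken simplex (with another rescaling). Only afterwards does it remark that the one-dimensional representation $\frac{1}{(d-1)!}\int_0^L e^{at}t^{d-1}\,\textnormal{d}t$ follows by repeated integration by parts. You instead slice first---using the simplex volume $V(t)=t^d/d!$ to collapse the multidimensional integral to the one-dimensional form immediately---and then run the integration-by-parts recursion $I_d = \tfrac{e^{aL}L^{d-1}}{a(d-1)!} - a^{-1}I_{d-1}$ to obtain the closed form. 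Your route is arguably cleaner, since the slicing step dispatches the multidimensional structure in one line and the remaining 1D induction avoids the rescaling bookkeeping that the paper's coordinate-peeling requires; on the other hand, the paper's approach is fully self-contained and does not invoke the simplex volume as a prerequisite.
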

\begin{proof}
  \[
\aligned
\intlim{\vec x \in \rsetp^d}{|\vec x| \leq L} \exp(a |\vec x|)
\textnormal{d}\vec x &= L^d \intlim{\vec x \in \rsetp^d}{|\vec x| \leq 1}
\exp(aL |\vec x|)
\textnormal{d}\vec x.
\endaligned
\]
Then, we prove, by induction on $d$ and  for $b = aL$, the following identity:
\[ \intlim{\vec x \in \rsetp^d}{|\vec x| \leq 1} \exp(b |\vec x|)
\textnormal{d}{\vec x} =(-b)^{-d} \left( 1 - \exp(b) \sum_{j=0}^{d-1}
  \frac{(-b)^j}{j!} \right).
   \]
First, for $d=1$, we have
\[ \int_0^1 \exp(b x) \textnormal{d}x = \frac{\exp(b) - 1}{b}.
 \]
Next, assuming that the identity is true for $d-1$, we prove it for $d$. Indeed, we have
 \begin{align*}
&\intlim{\vec x \in \rsetp^d}{|\vec x| \leq 1} \exp(b |\vec x|)
\textnormal{d}\vec x
\\& = \int_0^1 \exp(b y) \left( \intlim{\vec x \in \rsetp^{d-1}}{|\vec
     x| \leq 1-y} \exp(b |\vec x|)
   \textnormal{d}\vec x  \right) \textnormal{d}y \\
&= \int_0^1 \exp(b y) \left( 1-y \right)^{d-1} \left( \intlim{\vec x \in \rsetp^{d-1}}{|\vec
     x| \leq 1} \exp((1-y) b |\vec x|)
   \textnormal{d}\vec x  \right) \textnormal{d}y \\
 &= \int_0^1 \exp(b y) \frac{\left( 1-y \right)^{d-1}}{(-(1-y)b)^{d-1}} \left( 1 - \exp((1-y)b) \sum_{j=0}^{d-2}
   \frac{(-(1-y)b)^j}{j!} \right) \textnormal{d}y \\
  &= \int_0^1 \left[ \frac{\exp(b y)}{(-b)^{d-1}}  - \frac{\exp(b)}{(-b)^{d-1}} \sum_{j=0}^{d-2}
    \frac{(-(1-y)b)^j}{j!}  \right] \textnormal{d}y \\
  &= \frac{\left( -1 \right)^{d-1}}{b^d} \left( \exp(b) -1 \right)
  - \frac{(-1)^{d-1} \exp(b)}{b^{d-1}} \sum_{j=0}^{d-2}
  \frac{(-b)^j}{(j+1)!} \\
  &= \frac{\left( -1 \right)^{d}}{b^d} -
  \frac{\left( -1 \right)^{d}}{b^d} \exp(b)
  - \frac{(-1)^{d} \exp(b)}{b^{d}} \sum_{j=1}^{d-1}
  \frac{(-b)^j}{(j)!} \\
    &= (-b)^{-d} \left( 1 - \exp(b) \sum_{j=0}^{d-1}
  \frac{(-b)^j}{j!} \right).
\end{align*}
Finally, the second equality in \eqref{eq:int_identity} follows by repeatedly integrating by parts.
\end{proof}
\inchangesA{
\begin{lemma}\label{lem:work_bound}
For $a \in \rset^d$, assume $A = \max_{i=1,2\ldots d} a_i > 0$ and denote
\begin{align*}
  \mathfrak{a}_1 &= \#\left\{i = 1,2,\ldots d \::\: a_i =
    A\right\},\qquad
 & \mathfrak{a}_2 &=d-\mathfrak{a}_1.
\end{align*}
Then, for any $L > 0$, there exists an $\epsilon > 0$ satisfying
\[
\epsilon \leq A - \max \left( 0, \max_{\substack{i=1,2\ldots d \\ a_i
      < A}} a_i \right),
\]
such that the following inequality holds:
\begin{equation}
  \aligned
  \intlim{\vec x \in \rsetp^d}{|\vec x| \leq L} \exp(\vec a \cdot
  \vec x)\textnormal{d}\vec x
  &\leq \mathfrak{C_W}(\vec a) \exp\left(A L\right) L^{\mathfrak{a}_1 -1}.
\endaligned
\end{equation}
Here, the constant $\mathfrak {C_W}(\vec a)$ is given by
\begin{equation}
  \label{eq:exp_int_bound_CW}
\mathfrak {C_W}(\vec a) =
\begin{cases}
  \frac{1}{A(d-1)!} & \text{ if $\mathfrak a_1 = d$} \\
  \frac{4}{\epsilon(2A - \epsilon)} \frac{\exp(1-\mathfrak a_2
    )}{(\mathfrak a_1-1)! (\mathfrak a_2-1)!}  \left( \frac{2
      (\mathfrak a_2 -1)}{\epsilon} \right)^{\mathfrak{a}_2 -1} &
  \text{otherwise}
\end{cases}
\end{equation}
\end{lemma}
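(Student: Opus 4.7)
The plan is to treat the two formulas for $\mathfrak{C_W}(\vec a)$ in \eqref{eq:exp_int_bound_CW} separately. In the special case $\mathfrak{a}_1 = d$, every entry of $\vec a$ equals $A$, so $\vec a \cdot \vec x = A|\vec x|$, and Lemma~\ref{lem:exp_simplex} reduces the integral to $\frac{1}{(d-1)!}\int_0^L \exp(At)\,t^{d-1}\,\textnormal{d}t$. A single integration by parts yields $\int_0^L \exp(At)\,t^{d-1}\,\textnormal{d}t \leq \exp(AL)L^{d-1}/A$ (the boundary term is what we want and the remainder integral is nonnegative since $A>0$), matching the stated constant $[A(d-1)!]^{-1}$.

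For the main case $1 \leq \mathfrak{a}_1 < d$, I would first partition the coordinates of $\vec x$ into $\vec y \in \rsetp^{\mathfrak{a}_1}$ (entries with $a_i = A$) and $\vec z \in \rsetp^{\mathfrak{a}_2}$ (the rest). The hypothesis on $\epsilon$ guarantees both $A - \epsilon \geq 0$ and $a_j \leq A - \epsilon$ for every $j$ with $a_j < A$; combined with $z_j \geq 0$, this gives $a_j z_j \leq (A-\epsilon) z_j$ whatever the sign of $a_j$, and hence $\vec a \cdot \vec x \leq A|\vec y| + (A-\epsilon)|\vec z|$ on the simplex. A coarea / Fubini reduction, using that the slices $\{|\vec y|=u\}$ and $\{|\vec z| = v\}$ contribute volume factors $u^{\mathfrak{a}_1 - 1}/(\mathfrak{a}_1 - 1)!$ and $v^{\mathfrak{a}_2 - 1}/(\mathfrak{a}_2 - 1)!$, then collapses the $d$-fold integral into
\[
\intlim{\vec x \in \rsetp^d}{|\vec x| \leq L} \exp(\vec a \cdot \vec x)\,\textnormal{d}\vec x \leq \int_0^L\!\int_0^{L-u} \exp\!\bigl(Au + (A-\epsilon)v\bigr)\,\frac{u^{\mathfrak{a}_1 - 1} v^{\mathfrak{a}_2 - 1}}{(\mathfrak{a}_1 - 1)!(\mathfrak{a}_2 - 1)!}\,\textnormal{d}v\,\textnormal{d}u.
\]

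The decisive step is the rewrite $\exp((A-\epsilon)v) = \exp((A - \epsilon/2)v)\cdot \exp(-\epsilon v/2)$. An elementary calculation shows $v^{\mathfrak{a}_2 - 1}\exp(-\epsilon v/2)$ attains its maximum on $\rsetp$ at $v^\star = 2(\mathfrak{a}_2 - 1)/\epsilon$ with value $\left(\frac{2(\mathfrak{a}_2 - 1)}{\epsilon}\right)^{\mathfrak{a}_2 - 1}\exp(1 - \mathfrak{a}_2)$, which is exactly where those two factors of the stated $\mathfrak{C_W}$ originate. Pulling this constant out and integrating $\exp((A - \epsilon/2)v)$ in $v$ yields a factor $1/(A - \epsilon/2)$ times $\exp((A - \epsilon/2)(L-u))$. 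The remaining integral $\int_0^L u^{\mathfrak{a}_1 - 1}\exp((\epsilon/2)u)\,\textnormal{d}u$ is handled by the same IBP inequality used in Case 1, giving $2 L^{\mathfrak{a}_1 - 1}\exp((\epsilon/2)L)/\epsilon$. The exponentials recombine to $\exp(AL)$, the power of $L$ comes out as $L^{\mathfrak{a}_1 - 1}$, and the scalar prefactor $\frac{2}{\epsilon}\cdot \frac{1}{A-\epsilon/2} = \frac{4}{\epsilon(2A-\epsilon)}$ matches \eqref{eq:exp_int_bound_CW}.

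The main obstacle is bookkeeping rather than any hidden difficulty: one must justify the pointwise majorization $a_j z_j \leq (A-\epsilon) z_j$ when $a_j$ may be negative (for which the hypothesis $\epsilon \leq A - \max(0, \max_{a_i < A} a_i)$ is precisely tailored so that both $A-\epsilon \geq 0$ and $a_j \leq A - \epsilon$), and then arrange the split $A - \epsilon = (A - \epsilon/2) - \epsilon/2$ so that both remaining one-dimensional integrals have strictly positive exponential rates and can be dispatched by the elementary estimate $\int_0^\ell t^{k-1}\exp(\alpha t)\,\textnormal{d}t \leq \ell^{k-1}\exp(\alpha \ell)/\alpha$ for $\alpha > 0$.
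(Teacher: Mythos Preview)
Your proposal is correct and follows essentially the same route as the paper: the same coordinate split into maximal and non-maximal directions, the same majorization $\vec a\cdot\vec x\le A|\vec y|+(A-\epsilon)|\vec z|$, the same reduction via Lemma~\ref{lem:exp_simplex} to a double integral in $(u,v)$, and the same $\epsilon/2$ splitting combined with the bound $v^{\mathfrak a_2-1}e^{-\epsilon v/2}\le \bigl(2(\mathfrak a_2-1)/\epsilon\bigr)^{\mathfrak a_2-1}e^{1-\mathfrak a_2}$. The only cosmetic difference is that where you invoke integration by parts to obtain $\int_0^\ell t^{k-1}e^{\alpha t}\,\textnormal{d}t\le \ell^{k-1}e^{\alpha\ell}/\alpha$, the paper simply bounds $t^{k-1}\le\ell^{k-1}$ and integrates the exponential; both yield the same constant.
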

\begin{proof}
  First, note that $\vec a = A \vec 1$ for some scalar $A > 0$ and $\vec
  1 = (1,1,\ldots, 1) $ if and
  only if $\mathfrak{a}_1 = d$. Then Lemma \ref{lem:exp_simplex}
  immediately gives
  \[
  \intlim{\vec x \in \rsetp^{d} }{|\vec x| \leq L} \exp(
     \vec a \cdot \vec x) \textnormal{d}\vec x
     \leq \frac{L^{d-1} \exp(A L)}{A (d-1)!}.
     \]
     Otherwise, recall that
  \begin{equation}
    \label{eq:exp_bound}
    x^j \leq \left(\frac{j}{b}\right)^j \exp(-j) \exp(bx)
  \end{equation}
  holds for any $x>0, b>0$ and $j\in \nset$. Then, using
  Lemma \ref{lem:exp_simplex} and \eqref{eq:exp_bound}, we can write
   \begin{align*}
     &\intlim{\vec x \in \rsetp^{d} }{|\vec x| \leq L} \exp(
     {\vec a} \cdot \vec x) \textnormal{d}\vec x \\
     &\leq \intlim{\vec x_2 \in \rsetp^{\mathfrak{a}_2 } }{|\vec x_2|
       \leq L} \exp\left(\left(A - \epsilon\right) |\vec x_2|\right) \left(\intlim{\vec x_1
         \in \rsetp^{\mathfrak{a}_1 } }{|\vec x_1| \leq L - |\vec
         x_2|} \exp(A |\vec x_1|)
       \textnormal{d}\vec x_1 \right) \textnormal{d}\vec x_2 \\
     &= \frac{1}{(\mathfrak{a}_1-1) !} \intlim{\vec x_2 \in
       \rsetp^{\mathfrak{a}_2 } }{|\vec x_2| \leq L} \exp\left(
       (A - \epsilon) |\vec x_2|\right) \left(\int_0^{L - |\vec x_2|} \exp(A t)
       t^{\mathfrak{a}_1 -1} \textnormal{d}t \right)
     \textnormal{d}\vec x_2 \\
     &= \frac{1}{(\mathfrak{a}_1-1) !} \int_0^{L} \exp(A t)
     t^{\mathfrak{a}_1 -1} \left( \intlim{\vec x_2 \in
         \rsetp^{\mathfrak{a}_2 } }{|\vec x_2| \leq L -t}
       \exp\left((A - \epsilon) |\vec x_2|\right) \textnormal{d}\vec x_2 \right)
     \textnormal{d}t
     \\
     &= \frac{1}{(\mathfrak{a}_1-1) !(\mathfrak{a}_2 -1)!} \int_0^{L}
     \exp(A t) t^{\mathfrak{a}_1 -1} \left( \int_0^{L-t}
       \exp\left(( A - \epsilon) z\right)z^{\mathfrak{a}_2 -1} \textnormal{d}z \right)
     \textnormal{d}t\\
     &\leq {\mathfrak{C}} \int_0^{L} \exp(A t)
     t^{\mathfrak{a}_1 -1} \left( \int_0^{L-t} \exp\left(
         \frac{z(2A - \epsilon) }{2} \right) \textnormal{d}z \right)
     \textnormal{d}t,
   \end{align*}
\[ \text{where}\qquad {\mathfrak{C}} = \frac{\exp(1-\mathfrak{a}_2 )}{(\mathfrak{a}_1-1)! (\mathfrak{a}_2-1)!}
 \left( \frac{2 (\mathfrak{a}_2 -1)}{\epsilon}
\right)^{\mathfrak{a}_2 -1},\]
continuing
\begin{align*}
  & \intlim{\vec x \in \rsetp^{d} }{|\vec x| \leq L} \exp(
  {\vec a} \cdot \vec x) \textnormal{d}\vec x \\
  &\leq {\mathfrak{C}} \exp\left(\frac{L(2 A - \epsilon) }{2}
     \right) \frac{2}{2A - \epsilon} \int_0^{L}
  t^{\mathfrak{a}_1 -1}
  \exp\left(\frac{\epsilon t}{2} \right) \textnormal{d}t \\
  &\leq {\mathfrak{C}} \exp\left(\frac{L(2 A - \epsilon) }{2} \right) \frac{2 L^{\mathfrak{a}_1
      -1}}{2 A -\epsilon} \int_0^{L}
  \exp\left(\frac{\epsilon t}{2} \right) \textnormal{d}t \\
  &\leq \frac{4{\mathfrak{C}}}{\epsilon (2A - \epsilon)} \exp\left(A L\right) L^{\mathfrak{a}_1
    -1}.
\end{align*}
\end{proof}
}

\begin{lemma} \label{lem:bias_bound}
The following inequality holds for any $L \geq 1$ and $\vec a \in \rsetp^d$:
\begin{equation*}
  \aligned
  \intlim{\vec x \in \rsetp^d}{|\vec x| > L} \exp(-\vec a \cdot
  \vec x)\textnormal{d}\vec x
  &\leq \mathfrak{C_B}(\vec a) \exp\left(-A L\right) L^{\mathfrak{a}_1 -1},
\endaligned
\end{equation*}
where
\begin{equation}
\label{eq:exp_int_bound_CB}
\mathfrak{C_B}(\vec a) =
\begin{cases}
  \sum_{j=0}^{d-1} \frac{A^{j-d}}{j!} & \text{ if $\mathfrak a_1 =
    d$} \\
  {\left( A + \epsilon \right)}^{-\mathfrak{a}_2}
  \sum_{j=0}^{\mathfrak{a}_1-1} \frac{A^{j-\mathfrak{a}_1}}{j!} +
  \frac{ 2\sum_{j=0}^{\mathfrak{a}_2-1} \exp(-j) \left( \frac{2
        j}{\epsilon} \right)^{j}
    \frac{{\left( A + \epsilon \right)}^{j-\mathfrak{a}_2}}{j!}}{
    (\mathfrak{a}_1-1)! \epsilon } &
  \text{otherwise}
\end{cases}
\end{equation}
and
\begin{align*}
  A &= \min_{i=1,2\ldots d} a_i, \qquad
  \epsilon = \min_{\substack{i=1,2\ldots d \\ a_i >
      A}} a_i  - A, \\
  \mathfrak{a}_1 &= \#\left\{i = 1,2,\ldots d \::\: a_i =
    A\right\}. \\
  \mathfrak{a}_2 &=d-\mathfrak{a}_1.
\end{align*}
\end{lemma}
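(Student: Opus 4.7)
The plan is to handle the two cases $\mathfrak{a}_1 = d$ and $\mathfrak{a}_1 < d$ separately, mirroring the structure of the proof of Lemma~\ref{lem:work_bound}. In both cases the main tool is to reduce the multivariate integral to a one-dimensional one via the co-area identity
\[
 \intlim{\vec y \in \rsetp^k}{|\vec y|>u} f(|\vec y|)\,\textnormal{d}\vec y \;=\; \int_u^\infty f(t)\,\frac{t^{k-1}}{(k-1)!}\,\textnormal{d}t,
\]
and then to evaluate the resulting incomplete-Gamma integral in closed form.

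In the case $\mathfrak{a}_1 = d$ the integrand is radial, since $\vec a\cdot\vec x = A|\vec x|$. Applying the identity above and using the exact value $\int_L^\infty e^{-At}t^{d-1}\textnormal{d}t = (d-1)!\,e^{-AL}\sum_{j=0}^{d-1} A^{j-d}L^j/j!$, we arrive at $e^{-AL}\sum_{j=0}^{d-1} A^{j-d}L^j/j!$. Since $L\geq 1$ we may bound $L^j\le L^{d-1}$ for every $j\le d-1$, factor out $L^{d-1}$, and obtain precisely the first branch of the stated constant $\mathfrak{C_B}$.

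For $\mathfrak{a}_1 < d$ I would split the coordinates as $\vec x=(\vec x_1,\vec x_2)$, with $\vec x_1\in\rsetp^{\mathfrak{a}_1}$ corresponding to indices $a_i=A$ and $\vec x_2\in\rsetp^{\mathfrak{a}_2}$ to the remaining indices, where $a_i\geq A+\epsilon$. Bounding $e^{-\vec a\cdot \vec x}\le e^{-A|\vec x_1|}e^{-(A+\epsilon)|\vec x_2|}$ and converting to the scalar variables $t=|\vec x_1|$ and $s=|\vec x_2|$ gives an integral over $\{t,s\ge 0,\ t+s>L\}$. I would then partition this domain into Part A $=\{t>L,\ s\ge 0\}$ and Part B $=\{0\le t\le L,\ s>L-t\}$. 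In Part A the integrals decouple: the $s$-integral is $(A+\epsilon)^{-\mathfrak{a}_2}$, while the $t$-integral equals $e^{-AL}\sum_{j=0}^{\mathfrak{a}_1-1}A^{j-\mathfrak{a}_1}L^j/j!$, and again the bound $L^j\le L^{\mathfrak{a}_1-1}$ delivers the first term of $\mathfrak{C_B}$.

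The interesting case is Part B, which after computing the inner $s$-integral via incomplete Gamma and the change of variable $v=L-t$ reduces to controlling
\[
 \int_0^L e^{\epsilon t}\,t^{\mathfrak{a}_1-1}(L-t)^j\,\textnormal{d}t \;=\; e^{\epsilon L}\int_0^L e^{-\epsilon v}(L-v)^{\mathfrak{a}_1-1}v^j\,\textnormal{d}v.
\]
The obstacle here is extracting the prefactor $L^{\mathfrak{a}_1-1}$ cleanly while removing the $v^j$ factor without destroying the $e^{-AL}$ decay. I would bound $(L-v)^{\mathfrak{a}_1-1}\le L^{\mathfrak{a}_1-1}$ and then apply the same trick used in Lemma~\ref{lem:work_bound}, namely $v^j\le (2j/\epsilon)^j e^{-j}e^{\epsilon v/2}$; sacrificing half of the exponential $e^{-\epsilon v}$ leaves $\int_0^L e^{-\epsilon v/2}\textnormal{d}v\le 2/\epsilon$, producing the constant factor $\tfrac{2}{\epsilon}e^{-j}(2j/\epsilon)^j$. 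Combining this with the sum over $j$ from the incomplete-Gamma expansion and collecting the $e^{-AL}$ factor from $e^{-(A+\epsilon)L}\cdot e^{\epsilon L}$ yields exactly the second term of $\mathfrak{C_B}$ in \eqref{eq:exp_int_bound_CB}. Adding the contributions of Part A and Part B completes the proof.
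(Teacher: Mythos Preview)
Your proof is correct and, at the level of the actual estimates, coincides with the paper's argument: both reduce to the same two one-dimensional pieces and bound them with the identical tools (the incomplete-Gamma expansion, the inequality $L^j\le L^{\mathfrak{a}_1-1}$ for $L\ge 1$, and the trick $v^j\le (2j/\epsilon)^j e^{-j}e^{\epsilon v/2}$ from Lemma~\ref{lem:work_bound}). The only organizational difference is that the paper writes $\int_{|\vec x|>L}=\int_{\rsetp^d}-\int_{|\vec x|\le L}$ and then expands the simplex integral via Lemma~\ref{lem:exp_simplex}, after which the full-space term cancels against one of the pieces; your direct partition of $\{t+s>L\}$ into $\{t>L\}\cup\{t\le L,\ s>L-t\}$ skips this cancellation step and lands on the same two integrals immediately, which is slightly cleaner.
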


\begin{proof}
  First, note that $\vec a = A \vec 1$ for some scalar $A > 0$ and $\vec 1 =
  (1,1,\ldots, 1)$ if and only if $\mathfrak{a}_1 = d$. Then Lemma
  \ref{lem:exp_simplex} immediately gives:
  \[
  \aligned &\intlim{\vec x \in \rsetp^{d} }{|\vec x| > L} \exp(- A
  |\vec x|) \textnormal{d}\vec x \\
  &= \int_{\vec x \in \rsetp^{d} } \exp( -A |\vec x|) \textnormal{d}\vec x -
  \intlim{\vec x \in \rsetp^{d} }{|\vec x| \leq L}
  \exp( -A |\vec x|) \textnormal{d}\vec x \\
  &= A^{-d} - A^{-d} \left( 1 - \exp(-AL) \sum_{j=0}^{d-1}
    \frac{(AL)^j}{j!} \right) \\
  &\leq \exp(-AL) L^{d-1} \sum_{j=0}^{d-1}  \frac{A^{j-d}}{j!}  \\
     \endaligned
     \]
     Otherwise, without loss of generality, assume that $a_i \leq a_j$
     for all $1\leq i \leq j\leq d$. Then, again using Lemma
     \ref{lem:exp_simplex}, we can write
\begin{align*}
  &\intlim{\vec x \in \rsetp^d}{|\vec x| \geq L}
  \exp\left(-\sum_{i=1}^d a_i x_i \right)\:
  \textnormal{d}\vec{x} \\
  &\leq \intlim{\vec x \in \rsetp^d}{|\vec x| \geq L}
  \exp\left(-A  \sum_{i=1}^{\mathfrak{a}_1} x_i -{\left( A + \epsilon \right)} \sum_{i=\mathfrak{a}_1+1}^d x_i\right)\: \textnormal{d}\vec{x} \\
  &= \Bigg[
  \int_{\vec x \in \rsetp^d} \exp\left(- A
    \sum_{i=1}^{\mathfrak{a}_1} x_i -{\left( A + \epsilon \right)}
    \sum_{i=\mathfrak{a}_1+1}^d x_i\right) \textnormal{d}\vec{x} - \\
  &\hskip6em \intlim{\vec x \in \rsetp^d}{|\vec x| \leq L}
  \exp\left(-A \sum_{i=1}^{\mathfrak{a}_1} x_i -{\left( A + \epsilon \right)}
    \sum_{i=\mathfrak{a}_1+1}^d x_i \right)\: \textnormal{d}\vec{x} \Bigg],
\end{align*}
where
\[\int_{\vec x \in \rsetp^d} \exp\left(- A \sum_{i=1}^{\mathfrak{a}_1} x_i
  -(A + \epsilon) \sum_{i=\mathfrak{a}_1+1}^d x_i\right) \textnormal{d}\vec{x} = A ^{-\mathfrak{a}_1} {\left( A + \epsilon \right)}^{-\mathfrak{a}_2}.\]
Now consider
\begin{align*}
 &\intlim{\vec x \in \rsetp^d}{|\vec x| \leq L} \exp\left(-
    A \sum_{i=1}^{\mathfrak{a}_1} x_i -{\left( A + \epsilon \right)} \sum_{i=\mathfrak{a}_1+1}^d x_i
  \right)\: \textnormal{d}\vec{x}
  \\
  &=\intlim{\vec x_2 \in \rsetp^{\mathfrak{a}_2}}{|\vec x_2| \leq L} \exp\left(-
    {\left( A + \epsilon \right)} |\vec x_2| \right) \\
  &\hskip6em \left(  \intlim{\vec x_1 \in
    \rsetp^{\mathfrak{a}_1}}{|\vec x_1| \leq L - |\vec x_2|} \exp\left(- A
    |\vec x_1| \right)\: \textnormal{d}\vec{x_1}\right) \:
  \textnormal{d}\vec{x_2}\\
  &= \frac{1}{(\mathfrak{a}_1-1)!} \intlim{\vec x_2 \in \rsetp^{\mathfrak{a}_2}}{|\vec x_2|
    \leq L} \exp\left(- {\left( A + \epsilon \right)} |\vec x_2| \right) \left(
    \int_0^{L-|\vec x_2|} \exp(-A t) t^{\mathfrak{a}_1-1} \textnormal{d}t \right)
  \textnormal{d}\vec{x_2}\\
  &= \frac{1}{(\mathfrak{a}_1-1)!} \int_0^{L} \exp(-A t) t^{\mathfrak{a}_1-1}
  \left(\intlim{\vec x_2 \in \rsetp^{\mathfrak{a}_2}}{|\vec x_2| \leq L-t}
    \exp\left(- {\left( A + \epsilon \right)} |\vec x_2| \right) \textnormal{d}\vec{x_2}
  \right) \textnormal{d}t
  \\
  &= \frac{1}{(\mathfrak{a}_1-1)! (\mathfrak{a}_2-1)!} \int_0^{L} \exp(-A t) t^{\mathfrak{a}_1-1}
  \left(\int_0^{L-t} \exp(-{\left( A + \epsilon \right)} z) z^{\mathfrak{a}_2-1}\: \textnormal{d}z
  \right)
  \textnormal{d}t \\
  &= \frac{{\left( A + \epsilon \right)}^{-\mathfrak{a}_2}}{(\mathfrak{a}_1-1)!} \int_0^{L} \exp(-A
  t) t^{\mathfrak{a}_1-1} \left(1 - \exp(-{\left( A + \epsilon \right)} (L-t))
    \sum_{j=0}^{\mathfrak{a}_2-1} \frac{({\left( A + \epsilon \right)} (L-t))^j}{j!} \right)
  \textnormal{d}t \\
  &= A ^{-\mathfrak{a}_1} {\left( A + \epsilon \right)}^{-\mathfrak{a}_2} - A ^{-\mathfrak{a}_1} {\left( A + \epsilon \right)}^{-\mathfrak{a}_2}\left(\exp(-A L) \sum_{j=0}^{\mathfrak{a}_1-1} \frac{(A
      L)^j}{j!} \right) \\&\qquad - \frac{{\left( A + \epsilon \right)}^{-\mathfrak{a}_2}}{(\mathfrak{a}_1-1)!} \int_0^{L} \exp(-A t) t^{\mathfrak{a}_1-1}\left(
    \exp(-{\left( A + \epsilon \right)} (L-t)) \sum_{j=0}^{\mathfrak{a}_2-1}
    \frac{({\left( A + \epsilon \right)} (L-t))^j}{j!} \right)
  \textnormal{d}t.
\end{align*}
Here, we can bound
\[
 A ^{-\mathfrak{a}_1} {\left( A + \epsilon \right)}^{-\mathfrak{a}_2}\left(\exp(-A L) \sum_{j=0}^{\mathfrak{a}_1-1}
   \frac{(A L)^j}{j!}  \right) \leq
 A ^{-\mathfrak{a}_1} {\left( A + \epsilon \right)}^{-\mathfrak{a}_2}\exp(-A L) L^{\mathfrak{a}_1 -1} \sum_{j=0}^{\mathfrak{a}_1-1}
  \frac{A ^j}{j!}.
\]
Recall that ${\epsilon} > 0$ and bound, using
\eqref{eq:exp_bound} for $(L-t)^j$ with $b = {\epsilon}/{2}$,
\begin{align*}
  &\frac{{\left( A + \epsilon \right)}^{-\mathfrak{a}_2}}{(\mathfrak{a}_1-1)!} \int_0^{L} \exp(-A t)
  t^{\mathfrak{a}_1-1}\left( \exp(-{\left( A + \epsilon \right)} (L-t)) \sum_{j=0}^{\mathfrak{a}_2-1}
    \frac{({\left( A + \epsilon \right)} (L-t))^j}{j!} \right)
  \textnormal{d}t \\
            &\leq \frac{{\left( A + \epsilon \right)}^{-\mathfrak{a}_2}}{(\mathfrak{a}_1-1)!} \left(
    \sum_{j=0}^{\mathfrak{a}_2-1} \exp(-j) \left( \frac{2 j}{ \epsilon } \right)^{j} \frac{{\left( A + \epsilon \right)}^j}{j!} \right) \\ &\hskip6em
  \exp\left(-L\left(\frac{ 2 A + \epsilon  }{2} \right)\right) \int_0^{L}
  \exp\left(\frac{\epsilon t }{2} \right) t^{\mathfrak{a}_1-1}
  \textnormal{d}t \\
  &\leq \frac{{\left( A + \epsilon \right)}^{-\mathfrak{a}_2}}{(\mathfrak{a}_1-1)!} \left(
    \frac{2}{\epsilon} \right) \left(
    \sum_{j=0}^{\mathfrak{a}_2-1} \exp(-j) \left( \frac{2 j}{ \epsilon
      } \right)^{j} \frac{{\left( A + \epsilon \right)}^j}{j!} \right)
  \exp\left(-A L\right)
  L^{\mathfrak{a}_1-1}.
\end{align*}
\end{proof}

\section{List of Definitions}
\label{app:defs}
In this section, for easier reference, we list definitions of notation that is used in multiple pages or sections
throughout the current work

\begin{tabular}{rl|}
\constRecall{$\tol_S$}{eq:tolS}
\constRecall{$\workEst$}{eq:gmlmc_worktilde_model}
\constRecall{$\workRem$}{eq:gmlmc_work1_model}
\constRecall{$\widetilde B$}{eq:gmlmc_biastilde_model}
\constRecall{$\tol_B$}{eq:tolB}
\constRecall{$\indset, \indsetA, \indsetB, \indsetC, \indsetBC$}{eq:ind_sets}
\constRecall{$d_1, d_2, d_3, \hat d$}{eq:ind_sets_sizes}
\constRecall{$\mathcal{C_B}$}{eq:fulltensor_C_Bias}
\constRecall{$\overline s_i, \overline w_i, \overline
  \gamma_i$}{eq:rates_not}
\constRecall{$\overline {\vec s}, \overline {\vec w}, \overline {\vec
    \gamma}$}{eq:rates_not_vec}
\end{tabular}
\begin{tabular}{rl}
\constRecall{$\chi, \eta, \gamma, \zeta, \xi$}{eq:max_rates}
\constRecall{$\mathfrak{x}, \mathfrak{e}, \mathfrak{g},
  \mathfrak{z}$}{eq:max_rates}
\constRecall{$C_{\textnormal{Bias}}$}{eq:CBias_td}
\constRecall{$C_A$}{eq:CW_caseA}
\constRecall{$C_B$}{eq:CW_caseB}
\constRecall{$C_R$}{eq:work_rem_constant}
\constRecall{$\mathscr{I}_C$}{eq:ind_caseC}
\constRecall{$\mathscr{I}_D$}{eq:ind_caseD}
\constRecall{$p_i$}{eq:clt_cond_pi}
\constRecall{$\mathfrak{C_W}$}{eq:exp_int_bound_CW}
\constRecall{$\mathfrak{C_B}$}{eq:exp_int_bound_CB}
\end{tabular}

\bibliographystyle{siam}

\end{document}